\numberwithin{equation}{section}
\numberwithin{figure}{section}
\newcommand{\C}{\mathbb{C}}
\newcommand{\R}{\mathbb{R}}
\newcommand{\N}{\mathbb{N}}
\newcommand{\F}{\mathcal{F}}
\renewcommand{\P}{\mathbb{P}}
\newcommand{\E}{\mathbb{E}}
\newcommand{\e}{\varepsilon}
\newcommand{\1}{\mathbbm{1}}
\newcommand{\bfrac}[2]{\genfrac{}{}{0pt}{}{#1}{#2}}
\newcommand{\Lu}{ \overset{\Theta}{\Longrightarrow}}
\newtheorem{Theorem}{Theorem}[section]
\newtheorem{Proposition}[Theorem]{Proposition}\newtheorem{Corollary}[Theorem]{Corollary}\newtheorem{Lemma}[Theorem]{Lemma}\newtheorem{Remark}[Theorem]{Remark}\newtheorem{Definition}[Theorem]{Definition}\newtheorem{Example}[Theorem]{Example}
\newtheorem{assumption}[Theorem]{Assumption}
\numberwithin{equation}{section}
\begin{document}
\title[MLE in the ergodic Volterra Ornstein-Uhlenbeck process]{Maximum Likelihood estimation in the ergodic Volterra Ornstein-Uhlenbeck process}
\author{Mohamed Ben Alaya}
\author{Martin Friesen}
\author{Jonas Kremer$^1$}
\thanks{$^1$The views, opinions, positions or strategies expressed in this article are those of the authors and do not necessarily
represent the views, opinions, positions or strategies of, and should not be attributed to E.ON Energy Markets.}

\address[Mohamed Ben Alaya]{Laboratoire de Mathématiques Raphaël Salem (LMRS)
\\ Université de Rouen Normandie}
\email{mohamed.ben-alaya@univ-rouen.fr}

\address[Martin Friesen]{School of Mathematical Sciences\\
Dublin City University\\ Glasnevin, Dublin 9, Ireland}
\email{martin.friesen@dcu.ie}

\address[Jonas Kremer]{modelling \& Quant. Analytics \\ E.ON Energy Markets \\ 45131 Essen, Germany}
\email{jonas.kremer@eon.com}

\date{\today}

\subjclass[2020]{Primary 62M09; Secondary 62F12, 60G22}

\keywords{fractional Ornstein-Uhlenbeck process; maximum likelihood; ergodicity; law of large numbers}

\begin{abstract}
We investigate maximum likelihood estimation for the drift parameters of stochastic Volterra processes in the ergodic regime. In our first result, we establish the equivalence of laws under general changes of drift and provide the corresponding Radon-Nikodym derivative. This allows us to develop a rigorous maximum likelihood estimation framework. As an application, we study the Volterra Ornstein–Uhlenbeck process in the ergodic regime, considering both continuous-time and high-frequency discrete-time observations. In both regimes, we prove the consistency and asymptotic normality of the maximum likelihood estimators. A key intermediate result, which may be of independent interest, is a uniform Birkhoff-type theorem under an asymptotic independence condition. This theorem yields a locally uniform Law of Large Numbers over the parameter space.
\end{abstract}

\maketitle

\allowdisplaybreaks

\section{Introduction}

\subsection{Overview}
Stochastic Volterra processes have gained increased attention, e.g. due to their ability to capture the rough behaviour of sample paths (for an overview of the literature see e.g. \cite{math11194201}), and their flexibility to describe short and long-range dependencies \cite{MR3561100}. In the absence of jumps, the general convolution-type stochastic Volterra equation on $\R^d$ takes the form
\begin{align}\label{eq: VSDE}
 X_t = x_0 + \int_0^t K(t-s)b(X_s)ds + \int_0^t K(t-s)\sigma(X_s)dB_s
\end{align}
where $b: \R^d \longrightarrow \R^d$ denotes the drift, $\sigma: \R^d \longrightarrow \R^{m \times d}$ the diffusion matrix, $B$ a standard Brownian motion on $\R^m$, and $K \in L_{loc}^2(\R_+; \R^{d \times d})$ the Volterra kernel. The fractional Riemann-Liouville kernel 
\begin{align}\label{eq: fractional kernel}
 K_{\alpha}(t) = \frac{t^{\alpha - 1}}{\Gamma(\alpha)}, \qquad \alpha \in (1/2, 1]
\end{align}
constitutes the most prominent example of Volterra kernels that allow for flexible incorporation of rough sample path behaviour. Other examples of Volterra kernels covered by this work are, e.g., the exponentially damped fractional kernel $K(t) = \frac{t^{\alpha-1}}{\Gamma(\alpha)}e^{-\lambda t}$, $K(t) = \sum_{j=1}^N c_j e^{-\lambda_j t}$ used for Markovian approximations \cite{MR3934104, MR4521278}, and the fractional $\log$-kernel $K(t) = \log(1+t^{-\alpha})$ with $\alpha \in (0,1]$. For multi-dimensional models, one often considers a diagonal matrix of Volterra kernels given as above, i.e. $K(t) = \mathrm{diag}(K_1(t), \dots, K_d(t))$, where each $K_j$ is given by one of the previous examples.

Suppose that the drift depends on an unknown parameter $\vartheta \in \Theta$, i.e., $b = b_\vartheta$. In applications, e.g. risk management in financial markets, it is often essential to estimate the parameter $\vartheta$ from historical data. While the calibration of stochastic Volterra models to option prices has been studied extensively in the literature \cite{MR3494612, MR4412586, MR3905737, MR4091168}, statistical inference based on time series data has received far less attention and constitutes the main focus of this work. For Markovian diffusion processes\footnote{that is $K(t) \equiv 1$}, maximum-likelihood estimation (MLE) allows us to estimate the drift parameters $\vartheta$ under continuous observations, see e.g. \cite{MR2144185}. However, when dealing with Volterra processes of the form \eqref{eq: VSDE}, it is often the case that $X$ is not a semimartingale and hence we cannot directly apply the Girsanov transformation to prove the equivalence of measures, construct the likelihood ratio, and study asymptotic properties of the MLE. Furthermore, due to the absence of the Markov property, which typically applies to solutions of \eqref{eq: VSDE}, we cannot rely on existing methods to obtain long-term limit theorems, which are crucial for investigating the convergence of parameter estimators in the ergodic regime.

Statistical inference for stochastic equations driven by fractional Brownian motion (fBm) is currently an active area of research, see \cite{MR4023508, MR4068875}, and also \cite{MR2638974, MR3918739, MR4265053} for the specific case of the fractional Ornstein--Uhlenbeck process. Although models based on fBm and those based on Volterra equations may appear structurally similar, key differences prevent a direct application of results from the fBm literature. Most notably, Volterra processes are defined via convolutions against standard Itô integrals, in contrast to fBm models, which require a more delicate integration theory. Finally, in our setting, the Volterra kernel appears in both the drift and the noise terms, leading to qualitatively different ergodic properties, see \cite{BBF23}. Thus, the methods of this work complement the literature towards statistical inference for stochastic Volterra equations \eqref{eq: VSDE} applicable to a general class of completely monotone Volterra kernels $K$.

\subsection{Equivalence of laws for stochastic Volterra equations}

Let $b, \widetilde{b}: \R^d \longrightarrow \R^d$ denote the drift functions, $\sigma: \R^d \longrightarrow \R^{m \times k}$ the diffusion matrix. Denote by $X$ and $\widetilde{X}$ the processes given by \eqref{eq: VSDE} with drift $b$ and $\widetilde{b}$, respectively. Let $\P^X, \P^{\widetilde{X}}$ denote their law on the canonical path space $C(\R_+; \R^{d})$. In Theorem \ref{thm: equivalence of laws} and Corollary \ref{cor: equivalence laws lipschitz}, we provide sufficient conditions for the equivalence of the laws $\P^X|_{\F_T}, \P^{\widetilde{X}}|_{\F_T}$ with $T > 0$ fixed, and compute the Radon-Nikodym derivative on the path space given by
\begin{align}\label{eq: 4}
    \log\ \frac{d\P^X}{d\P^{\widetilde{X}}}\bigg|_{\F_T} = \ell_T(\widetilde{X})
\end{align}
where $\ell_T$ is a path-dependent functional that resembles the same characteristics as the classical Girsanov transformation. Below, we briefly outline how the absence of the semimartingale and Markov property for stochastic Volterra processes is circumvented and \eqref{eq: 4} obtained.

Firstly, let us suppose that the Volterra kernel admits a resolvent of the first kind denoted by $L$, i.e., a locally finite measure such that $L \ast K = K \ast L = 1$. Remark that, if $K$ is completely monotone such that $K(t_0) > 0$ for some $t_0 > 0$, then it admits a resolvent of the first kind. For additional details we refer to Section 2.1. Define the following functional on the path space of continuous functions 
\begin{align}\label{eq: Z process}
 Z_t(x) := \int_{[0,t]}L(ds)(x_{t-s} - x_0), \qquad x \in C(\R_+; \R^d).
\end{align}
Inserting $x = X$, in Section 2 we show that $(Z_t(X))_{t \geq 0}$ is a semimartingale with differential $dZ_t(X) = b(X_t)dt + \sigma(X_t)dB_t$. Remark that transformation \eqref{eq: Z process} already appears in \cite{MR4019885} to study, e.g., the convergence of martingale problems, and was first used in \cite{Z21} to obtain consistency of the MLE for the special case of the Volterra Ornstein-Uhlenbeck process in the non-ergodic regime $(b = 0$ and $\beta > 0$ below). In this work, we show that the inverse transformation \eqref{eq: Z process} and its inverse play a crucial role in deriving \eqref{eq: 4} for a general class of stochastic Volterra processes.

Let $\P_{Z}, \P_{\widetilde{Z}}$ be the law of $Z = Z(X)$, and $\widetilde{Z} = Z(\widetilde{X})$ respectively. Using the Girsanov transformation for semimartingales, we may study the equivalence of laws $\P^Z|_{\F_T} \sim \P^{\widetilde{Z}}|_{\F_T}$, provided that the Novikov condition is satisfied. Unfortunately, the latter is often difficult to verify. To obtain the desired equivalence under weaker assumptions, we identify $Z(X), Z(\widetilde{X})$ as solutions to path-dependent diffusion equations for which the Novikov condition may be relaxed, see \cite{MR1800857}. For this purpose, under some natural regularity conditions on $K$, in Lemma \ref{lemma: fractional differentiation}, we prove that the path-dependent transformation 
\begin{align}\label{eq: Gamma}
     \Gamma_t(z) = K(t)z_t + \int_0^t K'(t-s)(z_s - z_t)ds
\end{align}
is the inverse for \eqref{eq: Z process} in the sense that $\Gamma(Z(x)) = x - x_0$ and $Z(\Gamma(z)) = z$ holds for suitable paths $x,z$. For singular kernels $K$ the inverse transformation $\Gamma$ cannot be defined on all continuous paths, but only on the smaller space $C_0^{\eta}([0, T]; \R^d)$ of $\eta$-H\"older continuous paths that vanish at zero with $\eta + \alpha > 1$ where $\alpha$ is determined by the order of the singularity of $K$ and its derivative $K'$, see \eqref{eq: K asymptotics} for the precise condition. Note that $\Gamma$ describes an abstract differentiation operator while $Z$ is the corresponding abstract integration operator. For the case of fractional kernels \eqref{eq: fractional kernel}, the latter coincide with the well-known formulas for fractional differentiation and integration.

Using this inverse transformation, we verify that $Z_t = Z_t(X)$ is a solution of the path-dependent stochastic equation
\begin{align}\label{eq: path dependent}
    dZ_t = b_{\vartheta}\left( x_0 + \Gamma_t(Z)\right)dt + \sigma\left( x_0 + \Gamma_t(Z)\right)dB_t.
\end{align}
Hence, the non-markovian nature of $X$ transforms into path-dependence in \eqref{eq: path dependent}. For such path-dependent equations, we prove $\P^Z|_{\F_T} \sim \P^{\widetilde{Z}}|_{\F_T}$ and compute the Radon-Nikodym derivative on $\F_T$, see Theorem \ref{thm: equivalence of laws}. Finally, by another application of the transformation $x \longmapsto Z(x)$, we deduce in Corollary \ref{cor: equivalence laws lipschitz} property \eqref{eq: 4} for stochastic Volterra processes.

Let us conclude that the results developed in Section \ref{sec:2_equivalence_of_measures} are based on spaces of H\"older continuous paths merely for the sake of convenience. Indeed, it should be possible to modify Lemma \ref{lemma: fractional differentiation} to cover also cases where $X$ is a stochastic Volterra process with jumps. Theorem \ref{thm: equivalence of laws} and Corollary \ref{cor: equivalence laws lipschitz} would then remain valid as long as we may apply a Girsanov transformation for path-dependent stochastic equations beyond diffusions as given, e.g., by the Esscher transformation. Details of such a modification, however, go beyond the scope of this work and are therefore left for future research.

\subsection{Application to Maximum Likelihood Estimation}

As an application, we may now study MLE of the drift parameters. Namely, suppose that the drift is given in parametric form $b = b_{\vartheta}$ with $\vartheta \in \Theta$. Assuming that $\sigma, K$ are known, we aim to estimate the parameters $\vartheta$ from observations of $X$. Let us denote by $\P_{\vartheta}$ the law of $X$ with parameter $\vartheta$, and by $\P_{\vartheta_0}$ the law of $\widetilde{X}$ with some auxiliary parameter $\vartheta_0$. Following the general theory on MLE, the corresponding estimator $\widehat{\vartheta}_T$ for the parameters $\vartheta \in \Theta$ is defined by the maximum of the $\log$-likelihood ratio $\ell_T(\vartheta, \vartheta_0; X)$, i.e.
\begin{align}\label{eq: minimum MLE}
    \begin{cases} \widehat{\vartheta}_T = \mathrm{argmax}_{\vartheta \in \Theta} \ \ell_T(\vartheta, \vartheta_0; \widetilde{X}) 
    \\ \ell_T(\vartheta, \vartheta_0; \widetilde{X}) = \log \ \frac{d\P_{\vartheta}}{d\P_{\vartheta_0} }\bigg|_{\F_T}(\widetilde{X}).
    \end{cases}
\end{align}
Remark that $\vartheta_0$ is an auxiliary value that can be chosen arbitrarily. In case of several minima, one may pick any of such. For specific models, instead of solving the minimisation problem \eqref{eq: minimum MLE}, it is often feasible to study the first-order conditions $\nabla_{\vartheta} \ell_T(\vartheta, \vartheta_0; X) = 0$, which may be solved explicitly.

As an application, we carry out all computations for the Volterra Ornstein-Uhlenbeck process (VOU process) on $\R^d$ given as the unique strong solution of the stochastic Volterra equation
\begin{align}\label{eq: VOU}
    X_t = x_0 + \int_0^t K(t-s)\left( b + \beta X_s \right)ds + \int_0^t K(t-s) \sigma dB_s.
\end{align}
Here $B$ denotes a standard Brownian motion on $\R^m$, $\sigma \in \R^{m \times d}$, $b \in \R^d$, $\beta \in \R^{d \times d}$, and $K \in L_{loc}^2(\R_+; \R^{d \times d})$. Since the coefficients are globally Lipschitz continuous, the existence and uniqueness of a strong solution with sample paths in $L^2_{loc}(\R_+; \R^d)$ can be obtained from a standard fixed-point procedure. Under mild additional regularity conditions, it can be shown that $X$ has a modification with (H\"older) continuous sample paths, see \cite[Lemma 2.4, Lemma 3.1]{MR4019885}. Such processes, including their Markovian counterparts with $K \equiv 1$, are widely used in Physics (see, e.g., \cite{MR3834854} for the case of the Langevin equation and models with a spatial structure), appear in interest rate modelling as the Vasicek model, arise as a model for rough volatility \cite{MR3805308}, but also may serve as building blocks for electricity spot-price models \cite{BENNEDSEN2017301}.

In Theorem \ref{lemma: VOU Girsanov}, we derive an explicit form of the maximum likelihood estimator (MLE). Due to the specific structure of $\ell_T$ given in \eqref{eq: 4} (see Corollary \ref{cor: equivalence laws lipschitz}), this MLE takes the same form as the classical one studied in \cite{MR2471289}, with the distinction that stochastic integrals of the form $\int_0^T Y_s dX_s$ are replaced by $\int_0^T Y_s dZ_s(X)$. To analyze its asymptotic properties, we focus on the ergodic regime, which allows us to exploit the Law of Large Numbers and martingale limit theorems.

Accordingly, in Section 3, we first present in Theorem \ref{lemma: Birkhoff ergodic theorem} a general version of the Law of Large Numbers for stochastic processes (not necessarily Markovian) that satisfy the asymptotic independence condition
\begin{align}\label{eq: asymptotic independence}
 (X_t, X_s) \Longrightarrow \pi \otimes \pi \text{ as } s,t \to \infty \text{ such that } |t-s| \to \infty,
\end{align}
where $\pi$ denotes the limiting distribution of $X$ in the sense that $X_t \Longrightarrow \pi$. Subsequently, we verify in Lemma \ref{lemma: VOU asymptotic independence} the asymptotic independence property, and prove in Theorem \ref{thm: law of large numbers VOU} a Law of Large Numbers uniformly in the model parameters. As a by-product, we show that the stationary VOU process is mixing. As a further application, we prove that the method of moments is consistent under continuous, high-frequency, and low-frequency observations (see Corollary \ref{cor: VOU method of moments}). Finally, in Section 5, we turn to the convergence of the MLE for continuous observations (see Theorem \ref{thm: continuous MLE for VOU}) and, through numerical discretisation, also for discrete high-frequency observations (see Theorem \ref{thm: discrete MLE VOU}). In both settings, we prove that the derived estimators are consistent and asymptotically normal, locally uniformly in the model parameters. Remark that, while the H\"older continuity of sample paths for $X$ is convenient to apply the results of Section 2, subsequent results do not \textit{explicitly} use the latter until Section 5.2, where convergence of discrete high-frequency observations is established.

\subsection{Structure of the work}

In Section 2 we state and prove our main results on the construction of the inverse mapping $\Gamma$ given by \eqref{eq: Gamma}, and a proof for the equivalence of laws $\P^X|_{\F_T} \sim \P^{\widetilde{X}}|_{\F_T}$. We conclude Section 2 with the derivation of the MLE for the Volterra Ornstein-Uhlenbeck process. In Section 3, we prove a general result on the Law of Large Numbers under the asymptotic independence assumption, while Section 4 provides the application thereof to the VOU process. Finally, asymptotic properties of the MLE in continuous and discrete time are studied in Section 5, while particular examples of Volterra kernels are considered in Section 6. In Section 7, we provide a numerical illustration of our results, while some technical results on weak convergence uniformly in a family of parameters are collected in the appendix.

\section{Equivalence of Measures}\label{sec:2_equivalence_of_measures}

\subsection{Completely montone Volterra kernels}

Below, we collect some notation and results on completely monotone Volterra kernels and their resolvents of the first kind. Firstly, we say that $A \in \R^{d \times d}$ satisfies $A \geq 0$ iff $v^{\top}Av \geq 0$ holds for all $v \in \C^d$. Likewise, $A > 0$ iff $v^{\top}A v > 0$ for all $v \in \C^d \backslash \{0\}$. A Volterra kernel $K \in L_{loc}^1(\R_+; \R^{d \times d})$ is called completely monotone, if it is $C^{\infty}$ on $(0,\infty)$ and satisfies
\[
    (-1)^n K^{(n)}(t) \geq 0, \qquad \forall t > 0, \ n \in \N_0.
\]
For a given matrix-valued measure $F_0$ on $\R_+$ of locally bounded variation, and another matrix-valued function $F_1$ of locally bounded variation, we define the convolutions $F_1 \ast F_0$ and $F_0 \ast F_1$ by
\[
    (F_1 \ast F_0)(t) = \int_{[0,t]} F_1(t-s)F_0(ds)\ \text{ and } \ (F_0 \ast F_1)(t) = \int_{[0,t]}F_0(ds)F_1(t-s)
\]
when $t > 0$, provided that the dimensions match and the corresponding integral is absolutely convergent. These definitions are extended to $t = 0$ by right-continuity, provided that the limit exists. We refer to \cite{MR1050319} for additional details on completely monotone functions and convolutions.

A resolvent of the first kind for $K$ is the unique measure $L$ on $\R_+$ with values in $\R^{d \times d}$ of locally bounded variation satisfying $(K \ast L)(t) = (L \ast K)(t) = 1$ for all $t > 0$. Remark that, if $K$ is completely monotone such that $K(t_0) > 0$ for some $t_0 > 0$, the existence of such a resolvent is guaranteed by \cite[Chapter 5, Theorem 5.4]{MR1050319}. In particular, this resolvent is of the form
\begin{align}\label{eq: resolvent of the first kind}
    L(ds) = A\delta_0(ds) + L_0(s)ds
\end{align}
where $A \in \R^{d \times d}$ and $L_0 \in L_{loc}^1(\R_+; \R^{d \times d})$ is completely monotone. Moreover, $A$ is invertible iff $\limsup_{t \to 0}|K(t)| = + \infty$, and $A = 0$ iff $\lim_{t \to 0}v^{\top}K(t)v = +\infty$ for all $v \in \C^d \backslash \{0\}$. Finally, note that in dimension $d = 1$, one has $A = K(0^+)^{-1}$ with the convention $1/+\infty = 0$.

\subsection{Generalized fractional differentiation}

Let $C([0,T]; \R^d)$ denote the space of continuous functions on $[0,T]$, and $C^{\eta}([0,T]; \R^d)$ denote its subspace of $\eta$-H\"older continuous functions with $\eta \in [0,1]$, where $C^{0}([0,T]; \R^d) = C([0,T]; \R^d)$. Finally, let us denote by $C_0([0,T]; \R^d)$ the subspace of continuous functions with $x_0 = 0$, and $C_0^{\eta}([0,T]; \R^d)$ denotes the subspace of $\eta$-H\"older continuous functions that vanish in zero. 

Suppose that $K \in L_{loc}^1(\R_+; \R^{d \times d})$ is completely monotone such that $K(t_0) > 0$ for some $t_0 > 0$. Let $L$ be its resolvent of the first kind, i.e. $K \ast L = L \ast K = 1$. Fix $T > 0$. Below we study the operation $Z$ given by \eqref{eq: Z process}, and construct, in particular, its inverse transformation $\Gamma$. Firstly, recall that $Z$ is defined by 
\[
    Z_t(x) = \int_{[0,t]}L(ds)(x_{t-s} - x_0), \qquad x \in C([0,T]; \R^d),
\]
and let 
\[
    \Gamma_t(z) = K(t)z_t + \int_0^t K'(t-s)( z_s - z_t ) ds, \qquad z \in D(\Gamma)
\]
where its domain $D(\Gamma)$ consist of functions $z \in C([0,T]; \R^d)$ such that  
\[
    \int_0^T |K(t)z_t|dt + \int_0^T\int_0^t |K'(t-s)| |z_s - z_t| ds dt < \infty.
\]
Remark that $\Gamma(z)$ is well-defined as an element in $L^1([0,T]; \R^d)$. Below we study the mapping properties of $Z$ and $\Gamma$ in the scale of H\"older spaces and, in particular, show that they are inverse to each other. The latter allows us to show that $Z(X)$ solves a path-dependent diffusion equation.

\begin{Lemma}\label{lemma: fractional differentiation}
    Fix $T > 0$. Suppose that $K \in L^1([0,T]; \R^{d \times d})$ is completely monotone such that $K(t_0) > 0$ for some $t_0 > 0$. Then the following assertions hold: 
    \begin{enumerate}
        \item[(a)] The mapping $Z$ defines for each $\eta \in [0,1]$ a continuous linear operator
        \[
            Z: C^{\eta}([0,T]; \R^d) \longrightarrow C_0^{\eta}([0,T]; \R^d).
        \]
        \item[(b)] Suppose that $K \in C^1(\R_+; \R^{d \times d})$. Then $C([0,T]; \R^d) \subset D(\Gamma)$, 
        \[
            \Gamma: C([0,T]; \R^{d}) \longrightarrow C([0,T]; \R^d)
        \]
        is a continuous linear operator, and there exists a constant $C > 0$ independent of $t \in [0,T]$ and $z \in C([0,T]; \R^d)$ such that
        \[
            |\Gamma_t(z)|^2 \leq C|K(0)|^2 |z_t|^2 + C \left( \int_0^T |K'(r)|^2 dr \right)\int_0^t |z_s|^2 ds.
        \]
        Moreover, for each $x \in C([0,T]; \R^d)$ and each $z \in C([0,T]; \R^d)$ it holds that
        \begin{align}\label{eq: ZGamma inverse}
            \Gamma(Z(x)) = x - x_0 \ \text{ and } \ Z(\Gamma(z)) = z - L([0,t])z_0.
        \end{align}
        \item[(c)] Suppose there exists $\alpha \in (1/2,1]$ with 
        \begin{align}\label{eq: K asymptotics}
            \sup_{t \in (0,T]}\left(t^{1-\alpha}|K(t)| + t^{2 - \alpha}|K'(t)| \right) < \infty.
        \end{align}
        Let $\eta \in (1-\alpha,1]$. Then $C_0^{\eta}([0,T]; \R^d) \subset D(\Gamma)$ and 
        \[
            \Gamma: C_0^{\eta}([0,T]; \R^d) \longrightarrow C_0([0,T]; \R^d)
        \]
        is a continuous linear operator. Moreover, for each $x \in C([0,T]; \R^d)$ satisfying $Z(x) \in C_0^{\eta}([0,T]; \R^d)$, and each $z \in C_0^{\eta}([0,T]; \R^d)$, the following relations hold
        \[
            \Gamma(Z(x)) = x - x_0 \ \text{ and } \ Z(\Gamma(z)) = z.
        \] 
        \item[(d)] Finally, if $K$ satisfies in addition to \eqref{eq: K asymptotics} also $\sup_{t \in [0,T]}t^{3-\alpha}|K''(t)| < \infty$, then 
        \[
            \Gamma: C_0^{\eta}([0,T]; \R^d) \longrightarrow C_0^{\eta + \alpha - 1}([0,T]; \R^d)
        \]
        is a continuous linear operator.
    \end{enumerate}
\end{Lemma}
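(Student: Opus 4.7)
The plan is to handle parts (a)--(d) in sequence, pivoting on two structural facts. First, $L$ is a locally finite measure on $[0,T]$, so $|L|([0,T]) < \infty$ and standard H\"older estimates go through. Second, the resolvent identity $K \ast L = L \ast K = 1$ translates, via Laplace transforms, into $\widetilde{L}(p)\widetilde{K}(p) = 1/p$, from which the two inverse identities will drop out once the analytic setup is in place. A recurring simplification, available whenever $K \in C^{1}$, is obtained by expanding $\int_0^t K'(t-s)(z_s - z_t)\,ds = \int_0^t K'(t-s) z_s\,ds - z_t(K(t) - K(0))$, giving
\[
    \Gamma_t(z) = K(0) z_t + \int_0^t K'(t-s)\, z_s\, ds,
\]
which is notationally lighter and immediately yields the bound in (b) via Cauchy--Schwarz.

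For part (a), I would first observe $Z_0(x) = 0$ directly from the definition, and then, for $0 \leq s < t \leq T$, split
\[
    Z_t(x) - Z_s(x) = \int_{[0,s]} L(du)\,(x_{t-u} - x_{s-u}) + \int_{(s,t]} L(du)\,(x_{t-u} - x_0).
\]
For $x \in C^{\eta}$, both integrands are bounded by $[x]_{\eta}\,(t-s)^{\eta}$ (in the second integral, $u > s$ forces $t - u < t - s$), so the total is at most $[x]_{\eta}\,(t-s)^{\eta}\,|L|([0,T])$, yielding the claimed continuity. For part (b), continuity of $\Gamma_t(z)$ in $t$ follows from dominated convergence applied to the simplified form, while the quadratic bound is direct from Cauchy--Schwarz on the convolution together with $(a+b)^2 \leq 2a^2 + 2b^2$. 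For the inverse identities, I would take Laplace transforms in $t$, use $\widetilde{K'}(p) = p\widetilde{K}(p) - K(0)$ (valid since $K(0)$ is finite), observe that $\Gamma$ then acts as multiplication by $p\widetilde{K}(p)$, and compose with $Z$ (which acts as multiplication by $\widetilde{L}(p)$ applied to $x - x_0$); using $\widetilde{L}(p)\widetilde{K}(p) = 1/p$, both compositions collapse to the identity, up to boundary corrections involving $x_0$ and $z_0$ that give the claimed formulas.

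Part (c) is the main obstacle, since $K(0^+)$ may be infinite and so the simplified form of $\Gamma$ is unavailable. Working directly from the original definition, the assumption \eqref{eq: K asymptotics} combined with $z \in C_0^{\eta}$ (so $|z_s| \leq [z]_{\eta}\, s^{\eta}$ and $|z_s - z_t| \leq [z]_{\eta}\,(t-s)^{\eta}$) yields the bounds
\[
    |K(t) z_t| \lesssim t^{\alpha + \eta - 1}, \qquad \int_0^t |K'(t-s)|\,|z_s - z_t|\, ds \lesssim [z]_{\eta} \int_0^t r^{\alpha + \eta - 2}\, dr,
\]
both finite and vanishing as $t \to 0$ precisely because $\eta + \alpha > 1$; this places $z$ in $D(\Gamma)$ and gives $\Gamma(z) \in C_0$. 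Continuity of $\Gamma$ on $C_0^{\eta}$ follows by combining these estimates. For the inverse identities, a direct Laplace-transform argument is delicate because $K(0)$ is no longer meaningful; instead, I would proceed by approximation, setting $K^{n}(t) := K(t + 1/n)$ (which is $C^{1}$ and completely monotone on $[0,T]$), applying the part (b) identities to the associated operators $\Gamma^{n}, Z^{n}$, and passing to the limit $n \to \infty$ using the uniform estimates above together with dominated convergence. Since $z_0 = 0$ on $C_0^{\eta}$, the boundary correction from (b) drops out, yielding $Z(\Gamma(z)) = z$.

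Finally, for part (d), the additional bound $|K''(t)| \lesssim t^{\alpha - 3}$ controls the modulus of continuity of $\Gamma(z)$. For $0 \leq s < t \leq T$, I would decompose $\Gamma_t(z) - \Gamma_s(z)$ into the pointwise difference $K(t) z_t - K(s) z_s$ plus the comparison of the two convolution integrals; the convolution piece is then handled by splitting the integration region near the singularity ($u \in [s-\delta, s]$, controlled via $|K'|$ and $\eta$-H\"older regularity of $z$) from the region away from it ($u \in [0, s - \delta]$, controlled via $|K'(t-u) - K'(s-u)| \lesssim (t-s)\,(s-u)^{\alpha-3}$ from the mean value theorem), and optimising $\delta \sim t - s$ produces the exponent $\eta + \alpha - 1$. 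The hardest step throughout is the identification of the inverse operators in part (c), where the interchange of limits, convolutions, and singular integrals in the approximation argument requires careful uniform control.
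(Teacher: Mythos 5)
Parts (a), (b) and (d) of your plan are sound and essentially follow the paper's argument. In (a) the splitting of $Z_t(x)-Z_s(x)$ over $[0,s]$ and $(s,t]$ with the bound $\|x\|_{C^\eta}|L|([0,T])(t-s)^{\eta}$ is exactly the proof given. In (b), your Laplace-transform derivation of the inverse identities is a legitimate transform-side version of the paper's time-domain computation (the paper integrates $\Gamma_t(z)=\frac{d}{dt}(K\ast z)_t$ to obtain $K\ast z=1\ast\Gamma(z)$ and then convolves with $L$); it goes through provided you extend the paths from $[0,T]$ to $\R_+$, use causality of the convolution and uniqueness of the Laplace transform, and note that the boundary term comes out as $L([0,t])\Gamma_0(z)=L([0,t])K(0)z_0$, which is all that is needed later since the identity is only used when $z_0=0$. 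In (d), your near/far splitting with $\delta\sim t-s$ and the mean-value bound $|K'(t-u)-K'(s-u)|\lesssim (t-s)(s-u)^{\alpha-3}$ reproduces, up to regrouping, the paper's decomposition into the four terms $I_1,\dots,I_4$ and yields the exponent $\eta+\alpha-1$.

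The gap is in part (c). A minor point first: your estimates only give $|\Gamma_t(z)|\lesssim\|z\|_{C^{\eta}}\,t^{\eta+\alpha-1}$, i.e. vanishing at $0$; continuity of $t\mapsto\Gamma_t(z)$ on $(0,T]$, which is needed for $\Gamma(z)\in C_0([0,T];\R^d)$, requires an increment estimate — your part (d) decomposition would supply it, but as written (c) does not. More seriously, for the inverse identities you approximate \emph{both} operators, applying the part (b) identities to $\Gamma^n$ and $Z^n$ built from $K^n=K(\cdot+1/n)$ and letting $n\to\infty$. But $Z^n$ is defined through the resolvent of the first kind $L^n$ of $K^n$, so passing to the limit in $\Gamma^n(Z^n(x))=x-x_0$ and $Z^n(\Gamma^n(z))=\cdots$ requires convergence $L^n\to L$ (say vaguely) together with uniform bounds on $|L^n|([0,T])$ and uniform H\"older bounds on $Z^n(x)$; none of this is stated or proved, and it is not covered by your ``uniform estimates above'', which concern $K^n$ only. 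This extra layer is avoidable: it suffices to pass to the limit in the kernel-only relation $\int_0^t\Gamma^n_s(z)\,ds=\int_0^t K^n(t-s)z_s\,ds$ (dominated convergence, using $|K(t+1/n)|\lesssim t^{\alpha-1}$ and $|K'(t+1/n)|\lesssim t^{\alpha-2}$), obtaining $1\ast\Gamma(z)=K\ast z$ for $z\in C_0^{\eta}$, and then to convolve with the \emph{fixed} resolvent $L$ of $K$ and differentiate exactly as in (b); this is how the paper argues, and it never touches the approximating resolvents. Either supply the resolvent convergence or switch to this route.
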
 
\begin{proof}
    \textit{(a)} Let $x \in C([0,T]; \R^d)$ and $t \in [0,T]$. Then $|x_t - x_0| \leq 2 \|x\|_{\infty}$ and hence 
    \begin{align}\label{eq: 1}
        |Z_t(x)| \leq \int_{[0,t]}|x_{t-s} - x_0| |L|(ds) \leq 2 \|x\|_{\infty}|L|([0,T])
    \end{align}
    where $|L|$ denotes the variation measure of $L$. Hence, $Z$ is well-defined. It is clear that $Z_0(x) = 0$. To prove continuity, let $0 \leq s < t \leq T$. Then using $L(dr) = A\delta_0(dr) + L_0(r)dr$ with $L_0 \in L^1([0,T]; \R^{d \times d})$ completely monotone (see \eqref{eq: resolvent of the first kind}), we obtain
    \begin{align*}
        |Z_t(x) - Z_s(x)| \leq |A||x_{t} - x_s| + \int_{(0,s]} |x_{t-r} - x_{s-r}| |L_0(r)|dr
        + \int_{(s,t]}|x_{t-r} - x_0| |L_0(r)|dr.
    \end{align*}
    Since $x$ is continuous, all terms converge to zero by dominated convergence. This proves $Z(x) \in C_0([0,T]; \R^d)$. By \eqref{eq: 1}, it is clear that $Z$ is a continuous linear operator on $C([0,T]; \R^d)$. Finally, let $x \in C^{\eta}([0,T]; \R^d)$ and $0 \leq s < t$. Then
    \begin{align*}
        |Z_t(x) - Z_s(x)|
        &\leq \int_{[0,s]}|x_{t-r} - x_{s-r}| |L|(dr)
        + \int_{(s,t]}|x_{t-r} - x_0| |L|(dr)
        \\ &\leq \|x\|_{C^{\eta}}\left(|L|([0,s])(t-s)^{\eta} + \int_{(s,t]} (t-r)^{\eta}|L|(dr) \right)
        \\ &\leq \|x\|_{C^{\eta}}|L|([0,T])(t-s)^{\eta},
    \end{align*}
    and hence $Z(x) \in C_0^{\eta}([0,T]; \R^d)$.

    \textit{(b)} Firstly, it is easy to see that $C([0,T]; \R^d) \subset D(\Gamma)$. Let $z \in C([0,T]; \R^d)$. Then we obtain by direct computation
    \begin{align}\label{eq: 3}
        \Gamma_t(z) = \frac{d}{dt}\int_0^t K(t-s)z_s ds = K(0)z_t + \int_0^t K'(t-s)z_sds.
    \end{align}
    Hence, an application of the Cauchy-Schwarz inequality gives  
    \[
        |\Gamma_t(z)|^2 \lesssim |K(0)|^2 |z_t|^2 + \left(\int_0^T |K'(r)|^2 dr \right) \int_0^t |z_s|^2 ds
    \]
    which is finite due to $K \in C^1(\R_+; \R^{d \times d})$. In particular, $\Gamma: C([0,T]; \R^d) \longrightarrow C([0,T]; \R^d)$ is a continuous linear mapping. 
    
    Finally, it remains to show that $Z, \Gamma$ are inverse to each other. Integrating both sides of \eqref{eq: 3}, we obtain for each $z \in C([0,T]; \R^d)$
    \begin{align}\label{eq: Gamma relation}
     \int_0^t K(t-s)z_sds = \int_0^t \Gamma_s(z)ds.
    \end{align}
    This identity uniquely determines the relationship between $Z$ and $\Gamma$. Indeed, let $x \in C([0,T]; \R^d)$. Then \eqref{eq: Gamma relation} applied to $z = Z(x)$ gives 
    \[
     \int_0^t (x_s - x_0)ds = \int_0^t K(t-s)Z_s(x)ds = \int_0^t \Gamma_s(Z(x))ds
    \]
    and hence $x_t = x_0 + \Gamma_t(Z(x))$. Conversely, let $z \in C([0,T]; \R^d)$. Convolving \eqref{eq: Gamma relation} with $L$ and using $L \ast K = 1$ gives $1 \ast z = L \ast 1 \ast \Gamma(z) = 1 \ast (L \ast \Gamma(z))$ and hence
    \begin{align*}
     z_t = \int_{[0,t]}L(ds)\Gamma_{t-s}(z) &= \int_{[0,t]}L(ds)(\Gamma_{t-s}(z) - \Gamma_0(z)) + L([0,t])\Gamma_0(z)
     \\ &= Z_t(\Gamma(z)) + L([0,t])z_0.
    \end{align*}
    This proves all assertions.

    \textit{(c)} Let $z \in C_0^{\eta}([0,T]; \R^d)$ where $\eta + \alpha > 1$. Then
    \[
        |K(t)z_t| = |K(t)(z_t - z_0)| \lesssim t^{\alpha - 1}t^{\eta}\|z\|_{C^{\eta}}
    \]
    and similarly
    \[
        \int_0^T \int_0^t |K'(t-s)| |z_s - z_t| ds dt \lesssim \int_0^T \int_0^t |t-s|^{\alpha - 2}|t-s|^{\eta}ds dt \|z\|_{C^{\eta}} < \infty.
    \]
    Thus, since $\eta + \alpha > 1$, we obtain $C_0^{\eta}([0,T]; \R^d) \subset D(\Gamma)$ and  
    \begin{align*}
        |\Gamma_t(z)| &\lesssim \|z\|_{C^{\eta}}\left( t^{\eta + \alpha - 1} + \int_0^t (t-s)^{\eta + \alpha - 2}ds \right)
        = \|z\|_{C^{\eta}}\frac{\alpha + \eta}{\alpha + \eta - 1}t^{\alpha + \eta - 1}.
    \end{align*}
    This shows that $\Gamma(z)$ is well-defined and has the desired H\"older continuity at $t = 0$. Now let $0 < s < t$ be arbitrary. A short computation yields the decomposition
    \begin{align} \notag
        \Gamma_t(z) - \Gamma_s(z) &= (K(t) - K(s))z_s + K(t-s)(z_t - z_s)
        \\ &\qquad + \int_s^t K'(t-r)(z_r - z_t)dr \notag
        \\ &\qquad + \int_0^s (K'(t-s + r) - K'(r))(z_{s-r} - z_s)dr \notag
        \\ &= I_1 + I_2 + I_3 + I_4. \label{eq: 9}
    \end{align}
    Using \eqref{eq: K asymptotics} combined with dominated convergence, the right-hand side tends to zero as $t-s \to 0$. Hence $\Gamma(z) \in C_0([0,T]; \R^d)$.

    It remains to show that $\Gamma(Z(x)) = x - x_0$ and $Z(\Gamma(z)) = z$. Let $\e \in (0,1)$, define $K_{\e}(t) = K(t + \e)$, and note that $K^{\varepsilon} \in C^1(\R_+; \R^{d \times d})$ is also completely monotone with $K_{\varepsilon}(t_0 - \varepsilon) > 0$. Let
    \[
     \Gamma^{\e}_t(z) = K_{\e}(t)z_t + \int_0^t K_{\e}'(t-s)(z_s - z_t)ds.
    \]
    Then according to (b), $\Gamma^{\varepsilon}(z)$ is well-defined, satisfies $\Gamma^{\e}_t(z) = \frac{d}{dt}(K_{\e} \ast z)_t$, and hence after integration also
    \[
        \int_0^t \Gamma^{\e}_s(z)ds = \int_0^t K_{\e}(t-s)z_s ds.
    \]
    Using \eqref{eq: K asymptotics}, we find $|K(\varepsilon + t)| \lesssim (\varepsilon + t)^{\alpha - 1} \leq t^{\alpha - 1}$ and $|K'( \varepsilon + t)| \lesssim t^{\alpha - 2}$. Hence, by dominated convergence, we may pass on both sides to the limit $\varepsilon \searrow 0$, which gives 
    \[
        \int_0^t \Gamma_s(z)ds = \int_0^t K(t-s)z_s ds, \qquad z \in C_0^{\eta}([0,T]; \R^d).
    \]
    The assertion can now be deduced exactly as in part (b). 

    \textit{(d)} It remains to show that $\Gamma(z) \in C^{\eta + \alpha - 1}([0,T]; \R^d)$. Using decomposition \eqref{eq: 9}, it suffices to estimate $I_1,\dots, I_4$ separately. For $I_1$ we use $z_0 = 0$ and hence the H\"older continuity of $z$ in $0$ combined with \eqref{eq: K asymptotics} to find that
    \begin{align*}
        |I_1| \lesssim \| z\|_{C^{\eta}} s^{\eta} \int_s^t r^{\alpha - 2}dr 
        &\leq \| z\|_{C^{\eta}}\int_s^{t} r^{\alpha + \eta - 2}dr 
        \\ &\leq \| z\|_{C^{\eta}}\frac{t^{\alpha + \eta - 1} - s^{\alpha + \eta - 1}}{\alpha + \eta - 1} \lesssim \| z\|_{C^{\eta}} (t-s)^{\eta + \alpha - 1}.
    \end{align*}
    For $I_2$ and $I_3$ we obtain from \eqref{eq: K asymptotics} and the H\"older continuity of $z$
    \[
     |I_2| + |I_3| \lesssim \| z\|_{C^{\eta}} (t-s)^{\eta}|K(t-s)| + \| z\|_{C^{\eta}}\int_s^t (t-r)^{\alpha - 2 + \eta}dr \lesssim \| z\|_{C^{\eta}}(t-s)^{\eta + \alpha - 1}.
    \]
    To bound the last term, let us first note that again by \eqref{eq: K asymptotics}
    \begin{align*}
     |K'(t-s+r) - K'(r)| &\lesssim \int_r^{t-s+r} u^{\alpha - 3}du
     \\ &\lesssim r^{\alpha - 1}\int_r^{t-s+r}u^{-2}du \lesssim r^{\alpha - 1}\frac{t-s}{r(t-s + r)}.
    \end{align*}
    Hence we obtain by substitution $u = r/(t-s)$
    \begin{align*}
        |I_4| &\lesssim \| z\|_{C^{\eta}}\int_0^s r^{\eta +\alpha - 1} \frac{t-s}{r(t-s + r)} dr
        \\ &= \| z\|_{C^{\eta}}(t-s)^2 \int_0^{\frac{s}{t-s}} (t-s)^{\eta + \alpha - 1} u^{\eta + \alpha - 1} \frac{1}{(t-s)u(t-s + (t-s)u)}du
        \\ &\lesssim \| z\|_{C^{\eta}} (t-s)^{\eta + \alpha - 1}\int_0^{\infty} u^{\eta + \alpha - 2}(u+1)^{-1}du < \infty.
    \end{align*}
    Thus, we have shown that $\|\Gamma(z)\|_{C^{\eta + \alpha - 1}} \lesssim \| z\|_{C^{\eta}}$, which proves the first assertion.
\end{proof}

Condition \eqref{eq: K asymptotics} asserts that $K$ and its derivatives are bounded above by the fractional kernel \eqref{eq: fractional kernel}. In particular, for the fractional kernel \eqref{eq: fractional kernel}, we get $L(ds) = \frac{t^{-\alpha}}{\Gamma(1-\alpha)}ds$, and hence $Z$ corresponds to fractional integration, while its inverse transformation $\Gamma$ becomes fractional differentiation of order $\alpha$. From this perspective, this lemma provides an abstract analogue for such operations.
 
Condition \eqref{eq: K asymptotics} is not satisfied for $K(t) = e^{t^{-1}}$, for example. However, this condition is not essential. Analogous results can be obtained as long as we may pass to the limit $\varepsilon \to 0$ as done in part (c). Finally, remark that condition \eqref{eq: K asymptotics} may be slightly weakened at the expense of working with not necessarily H\"older continuous functions. We did not pursue the direction further to keep this work at a reasonable length. Moreover, if $K = \mathrm{diag}(K_1,\dots, K_d)$, then also $L = \mathrm{diag}(L_1,\dots, L_d)$, and this lemma can be applied component-wise.

Finally, let us remark that here we focus on Volterra kernels that admit a resolvent of the first kind which allows us to define transformation $Z$. For regular Volterra kernels that do not admit a resolvent of the first kind, e.g. $K(t) = t^{\alpha - 1}$ with $\alpha > 3/2$, the stochastic Volterra process is a semimartingale. In such a case, an integration by parts allows us to express $X$ in terms of its path-dependent semimartingale characteristics. The latter is sufficient for the methods presented in the subsequent sections. Details are beyond the scope of this work and left to the interested reader.

\subsection{Equivalence of measures}

In this section, we prove the equivalence of laws \eqref{eq: 4} under the assumption that the Volterra kernel $K$ satisfies the following condition.
\begin{assumption}\label{A}
The Volterra kernel $K \in L_{loc}^2(\R_+; \R^{d \times d})$ is completely monotone such that $K(t_0) > 0$ holds for some $t_0 > 0$, and either $K \in C^1(\R_+; \R^{d \times d})$ or there exists $\alpha \in (1/2,1]$ such that
    \[
        \sup_{t \in (0,T]}\left( t^{1-\alpha}|K(t)| + t^{2 - \alpha}|K'(t)|\right) < \infty, \qquad T > 0.
    \]    
\end{assumption}
Remark that under this assumption we may apply Lemma \ref{lemma: fractional differentiation}. Let $b, \widetilde{b}: \R^d \longrightarrow \R^d$ denote the drift functions, $\sigma: \R^d \longrightarrow \R^{m \times k}$ the diffusion matrix. Under standard Lipschitz assumptions and assumption \ref{A}, there exists a unique strong solution $X$ of the stochastic Volterra equation \eqref{eq: VSDE}. Similarly, let $\widetilde{X}$ be the unique strong solution with $b$ replaced by $\widetilde{b}$, i.e. 
\[
    \widetilde{X}_t = x_0 + \int_0^t K(t-s)b(\widetilde{X}_s)ds + \int_0^t K(t-s)\sigma( \widetilde{X}_s)dB_s,
\]
where $B$ denotes the same standard Brownian motion on $\R^m$ as in \eqref{eq: VSDE} used for $X$. It follows from \cite[Lemma 3.1]{MR4019885} that $\sup_{t \in [0,T]}\E[|X_t|^p + |\widetilde{X}_t|^p] < \infty$ holds for each $T, p > 0$. Hence, an application of \cite[Lemma 2.4]{MR4019885} shows that, for each $\varepsilon > 0$, $X, \widetilde{X}$ have a modification with $\alpha - 1/2 - \varepsilon$ H\"older continuous sample paths. 

Denote by $L$ the resolvent of the first kind of $K$, and let $Z_t = Z_t(X)$ and $\widetilde{Z}_t = Z_t(\widetilde{X}_t)$ be given by \eqref{eq: Z process} with pointwise evaluation at $x = X$ and $x = \widetilde{X}$, respectively. Finally, denote by $\P^Z$ the law of $Z(X)$ on the path space $C_0([0,T]; \R^d)$, and similarly $\P^{\widetilde{Z}}$. If $\P^Z|_{\F_T} \ll \P^{\widetilde{Z}}|_{\F_T}$, we denote by
\[
    \frac{d\P^Z}{d\P^{\widetilde{Z}}}\bigg|_{\F_T}: C_0([0,T]; \R^d) \longrightarrow \R_+
\]
its Radon-Nikodym derivative on $\F_T$. By $\frac{d\P^Z}{d\P^{\widetilde{Z}}}|_{\F_T}(\widetilde{Z})$ we denote the random variable obtained by pointwise evaluation of this density at $z = \widetilde{Z}$.

\begin{Theorem}\label{thm: equivalence of laws}
    Suppose that assumption \ref{A} is satisfied, that the drift and diffusion coefficients $b, \widetilde{b}, \sigma$ are Lipschitz continuous, $\sigma \sigma^{\top}$ is invertible, 
    \begin{align}\label{eq: uniform nondegeneracy}
        H(x) := b(x)^{\top} \left( \sigma(x) \sigma(x)^{\top} \right)^{-1} b(x) + \widetilde{b}(x) \left(\sigma(x) \sigma(x)^{\top} \right)^{-1} \widetilde{b}(x) 
    \end{align}
    is locally bounded, and 
    \begin{align}\label{eq: integrability}
        \P\left[ \int_0^T H(\widetilde{X}_t) dt < \infty \right] = 1.
    \end{align}
    Then $\P_{Z}|_{\F_T} \ll \P_{\widetilde{Z}}|_{\F_T}$ on $\F_T$ with Radon-Nikodym derivative given by
    \begin{align*}
        \log\ \frac{d\P_Z}{d\P_{\widetilde{Z}}}\bigg|_{\F_T}(\widetilde{Z}) = \ell_T(\widetilde{Z})
    \end{align*}
        with 
    \begin{align}\label{eq: ellT general}
        \ell_T(z) = \int_0^T B_t^-(z)^{\top} \Sigma_t(z) dz_s - \frac{1}{2} \int_0^T B_t^-(z)^{\top} \Sigma_t(z) B_t^+(z) dt 
    \end{align} 
    and the functions 
    \begin{align*}
        B_t^{\pm}(z) &= b(x_0 + \Gamma_t(z)) \pm \widetilde{b}(x_0 + \Gamma_t(z)),
        \\ \Sigma_t(z) &= \left(\sigma(x_0 + \Gamma_t(z)) \sigma(x_0 + \Gamma_t(z))^{\top}\right)^{-1}.
    \end{align*}
\end{Theorem}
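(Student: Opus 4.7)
The plan is to reduce the equivalence of laws for the (non-semimartingale) Volterra processes to an equivalence of laws for the associated path-dependent It\^o equations governing $Z$ and $\widetilde Z$, and then apply a Girsanov theorem for path-dependent diffusions under the weakened integrability condition \eqref{eq: integrability}. The bridge is Lemma~\ref{lemma: fractional differentiation}: under Assumption~\ref{A}, $X$ has $(\alpha-1/2-\varepsilon)$-H\"older sample paths, so $\Gamma(Z(X))$ is well-defined and equal to $X - x_0$. Formally convolving \eqref{eq: VSDE} with the resolvent $L$ and using $L\ast K = 1$ yields
\begin{align*}
 Z_t(X) = \int_0^t b(X_s)\,ds + \int_0^t \sigma(X_s)\,dB_s;
\end{align*}
substituting $X_s = x_0 + \Gamma_s(Z(X))$ then shows that $Z = Z(X)$ is a strong solution of the path-dependent SDE
\begin{align*}
 dZ_t = b\bigl(x_0 + \Gamma_t(Z)\bigr)\,dt + \sigma\bigl(x_0 + \Gamma_t(Z)\bigr)\,dB_t, \qquad Z_0 = 0,
\end{align*}
with the analogous equation holding for $\widetilde Z$ with $b$ replaced by $\widetilde b$. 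The $L^2$-bound on $\Gamma$ in Lemma~\ref{lemma: fractional differentiation}(b)--(c), combined with the Lipschitz hypothesis on $b,\widetilde b,\sigma$ and the finite moments of $X,\widetilde X$, ensures that the coefficients of these path-dependent SDEs satisfy the functional Lipschitz/growth conditions under which such equations are well-posed.

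Given this reformulation, I would invoke the Girsanov theorem for path-dependent diffusions as in \cite{MR1800857}, where the Novikov condition is relaxed to a pathwise integrability condition of the form $\int_0^T \theta_s^\top \theta_s \, ds < \infty$ a.s., with $\theta_s := \sigma^\top(\sigma\sigma^\top)^{-1}(b-\widetilde b)$ evaluated at $x_0 + \Gamma_s(\widetilde Z)$. Since $x_0 + \Gamma_s(\widetilde Z) = \widetilde X_s$ by the inversion identity in Lemma~\ref{lemma: fractional differentiation}, one has
\begin{align*}
 \int_0^T \theta_s^\top \theta_s\, ds = \int_0^T (b-\widetilde b)^\top (\sigma\sigma^\top)^{-1}(b-\widetilde b)(\widetilde X_s)\, ds \leq 2\int_0^T H(\widetilde X_s)\, ds < \infty \quad \text{a.s.},
\end{align*}
by hypothesis \eqref{eq: integrability}. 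This yields $\P_Z|_{\F_T} \ll \P_{\widetilde Z}|_{\F_T}$ with the density given by the usual Dol\'eans-Dade exponential of $\int_0^\cdot \theta_s^\top d\widetilde W_s$, where $\widetilde W$ denotes the driving Brownian motion of $\widetilde Z$.

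It then remains to rewrite the density in the intrinsic form \eqref{eq: ellT general}, which is a purely algebraic step: under $\P_{\widetilde Z}$ we have $\sigma\,d\widetilde W_s = dZ_s - \widetilde b\,ds$, so
\begin{align*}
 \int_0^T \theta_s^\top d\widetilde W_s = \int_0^T (b-\widetilde b)^\top (\sigma\sigma^\top)^{-1}\,dZ_s - \int_0^T (b-\widetilde b)^\top (\sigma\sigma^\top)^{-1}\widetilde b\,ds,
\end{align*}
and combining the drift term with $-\tfrac12 \int_0^T (b-\widetilde b)^\top (\sigma\sigma^\top)^{-1}(b-\widetilde b)\,ds$ produces exactly $-\tfrac12 \int_0^T B_t^{-\top}\Sigma_t B_t^+\, dt$, with all coefficients evaluated at $x_0 + \Gamma_t(\widetilde Z)$. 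Evaluating the resulting functional of $z$ at $z = \widetilde Z$ gives the claimed formula.

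The principal obstacle is the verification that $Z(X)$ is indeed a strong solution of the path-dependent SDE in a sense strong enough to apply the path-dependent Girsanov theorem. Two subtle points arise: first, one must justify the formal convolution with $L$ rigorously, using the stochastic Fubini theorem together with the decomposition \eqref{eq: resolvent of the first kind} of the resolvent (the atom at $0$ producing the Brownian component and the absolutely continuous part handled by standard arguments); second, one must check that the functional $z \mapsto (b,\sigma)(x_0 + \Gamma_\cdot(z))$ is adapted and sufficiently regular for the results of \cite{MR1800857} to apply, which is where the continuity of $\Gamma$ on the relevant H\"older space from Lemma~\ref{lemma: fractional differentiation}(c) is essential. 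Beyond these technicalities, the remaining steps are mechanical applications of Girsanov's theorem and linear algebra.
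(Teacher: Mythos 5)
Your outline reproduces the paper's Steps 1--2 faithfully: convolving with $L$ to get the semimartingale representation $dZ_t = b(X_t)\,dt + \sigma(X_t)\,dB_t$, identifying $X_t = x_0 + \Gamma_t(Z)$ via Lemma~\ref{lemma: fractional differentiation}, and then invoking the path-dependent Girsanov theorem of \cite{MR1800857} under the pathwise integrability condition. For kernels with $K \in C^1(\R_+;\R^{d\times d})$ this is exactly what the paper does, and your algebraic rewriting of the density into the form \eqref{eq: ellT general} is correct.

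However, there is a genuine gap for singular kernels satisfying only \eqref{eq: K asymptotics}. The results of \cite{MR1800857} require the coefficient functionals $z \mapsto b(x_0+\Gamma_t(z))$, etc., to satisfy a functional Lipschitz and linear growth condition of the form $|\Gamma_t(z)|^2 \lesssim |z_t|^2 + \int_0^t |z_s|^2\,ds$ on the whole canonical space $C([0,T];\R^d)$; this bound is available only in part (b) of Lemma~\ref{lemma: fractional differentiation}, i.e.\ when $K \in C^1$. When $K$ is singular, $\Gamma$ is defined only on the H\"older subspace $C_0^{\eta}$ with $\eta+\alpha>1$, which carries full measure under $\P^Z$ but is not the setting in which the cited Girsanov theorem is formulated; you acknowledge this as a ``subtle point'' but do not resolve it, and a direct appeal to \cite{MR1800857} fails here. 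The paper circumvents this with a two-stage argument you omit: first it smooths the kernel, setting $K^{\varepsilon}(t)=K(t+\varepsilon)$, applies the $C^1$ case to $(X^{\varepsilon}, Z^{\varepsilon})$, and passes to the limit $\varepsilon\searrow 0$ in the identity $\E[F(Z^{\varepsilon})]=\E[e^{\ell_T(\widetilde Z^{\varepsilon})}F(\widetilde Z^{\varepsilon})]$; second, to justify this limit by generalised dominated convergence one must prove $\E[e^{\ell_T(\widetilde Z)}]=1$, which is a separate localization argument with stopping times $\tau_n(\widetilde X)=\inf\{t:|\widetilde X_t|\geq n\}\wedge T$, a Novikov condition for the stopped exponential, and a moment bound for $\widetilde X$ under the tilted measures $\P^{(n)}$. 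That last step is precisely where the hypothesis that $H$ is \emph{locally bounded} enters --- a hypothesis your proof never uses, which is a telltale sign that the mass-one verification is missing. Without it, Fatou only gives $\E[e^{\ell_T(\widetilde Z)}]\leq 1$ and the limiting measure could be defective.
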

\begin{proof}
     \textit{Step 1.} Let us first show that $Z := Z(X)$ solves a path-dependent diffusion equation. Firstly, using the associativity of the convolution (see \cite[Lemma 2.1]{MR4019885}) combined with $L \ast K = K \ast L = 1$, we obtain $Z = L \ast (X - x_0) = L \ast (K \ast b(X)) + L \ast (K \ast \sigma(X)dB) = 1 \ast b(X) + 1 \ast \sigma(X)dB$. Hence, $Z$ is a semimartingale with representation 
    \[
        dZ_t = b(X_t)dt + \sigma(X_t)dB_t.
    \]
    Since $\sup_{t \in [0,T]}\E[|X_t|^p] < \infty$ for each $T, p > 0$, an application of the Burkholder-Davies-Gundy inequality gives
    \begin{align*}
        \| Z_t - Z_s\|_{L^p(\Omega)}
        &\lesssim \int_s^t \|b(X_r)\|_{L^p(\Omega)}dr 
        + \left( \E\left[ \left(\int_s^t |\sigma(X_r)|^2 dr \right)^{p/2} \right]\right)^{1/p}
        \\ &\lesssim (t-s) + (t-s)^{1/2}.
    \end{align*}
    By an application of the Kolmogorov-Chentsov continuity criterion, we conclude that for each $\eta > 0$, $Z$ has a modification with $1/2 - \eta$-H\"older continuous sample paths, i.e. $\P^Z[ z \in C_0^{1/2 - \eta}(\R_+; \R^d)] = 1$. In particular, either if $K \in C^1([0,T]; \R^{d \times d})$ or $K$ satisfies \eqref{eq: K asymptotics} and hence letting $\eta$ be small enough such that $\frac{1}{2} - \eta + \alpha > 1$, we may apply Lemma \ref{lemma: fractional differentiation} to find $X_t - x_0 = \Gamma_t(Z)$. Hence, $Z$ solves the path-dependent diffusion equation
    \[
        dZ_t = b(x_0 + \Gamma_t(Z))dt + \sigma(x_0 + \Gamma_t(Z))dB_t.
    \]
    A similar argument applied to $\widetilde{Z} := Z(\widetilde{X})$ also gives the semimartingale representation $d\widetilde{Z}_t = \widetilde{b}(\widetilde{X}_t)dt + \sigma(\widetilde{X}_t)dB_t$, and hence $\widetilde{Z}$ solves the path-dependent diffusion equation $d\widetilde{Z}_t = \widetilde{b}(x_0 + \Gamma_t(\widetilde{Z}))dt + \sigma(x_0 + \Gamma_t(\widetilde{Z}))dB_t$. 
    
    \textit{Step 2.} Let us now suppose that $K \in C^1(\R_+; \R^{d \times d})$. Then, using Lemma \ref{lemma: fractional differentiation}.(b), combined with the Lipschitz continuity of $b, \widetilde{b}, \sigma$, we find 
    \begin{align*}
        |b(x_0 + \Gamma_t(z))|^2 + |\widetilde{b}(x_0 + \Gamma_t(z))|^2 + |\sigma(x_0 + \Gamma_t(z))|^2 
        &\lesssim 1 + |\Gamma_t(z)|^2
        \\ &\lesssim ( 1 + |z_t|^2) + \int_0^t (1 + |z_s|^2) ds.
    \end{align*}
    Similarly, we obtain from the Lipschitz continuity combined with the linearity of $\Gamma$
    \begin{align*}
         |b(x_0 + \Gamma_t(z)) &- b(x_0 + \Gamma_t(\widetilde{z}))|^2 + |\widetilde{b}(x_0 + \Gamma_t(z)) - \widetilde{b}(x_0 + \Gamma_t(\widetilde{z}))|^2 
         \\ &+ |\sigma(x_0 + \Gamma_t(z)) - \sigma(x_0 + \Gamma_t(\widetilde{z}))|^2 
        \\ &\qquad \lesssim |\Gamma_t(z - \widetilde{z})|^2 
        \lesssim |z_t - \widetilde{z}_t|^2 + \int_0^t |z_s - \widetilde{z}_s|^2 ds.
    \end{align*}
    Hence, the multi-dimensional analogue of the path-dependent Lipschitz and linear growth condition \cite[condition (I)]{MR1800857} is satisfied. Define 
    \begin{align}\label{eq: u}
        u(x) = \sigma(x)^{\top} ( \sigma(x) \sigma(x)^{\top})^{-1} (b(x) - \widetilde{b}(x)).
    \end{align}
    Then we obtain $\sigma(x) u(x) = b(x) - \widetilde{b}(x)$ for $x \in \R^d$. Using the notation $x_t = x_0 + \Gamma_t(z)$, we verify that $u_t(z) := u(x_0 + \Gamma_t(z))$, satisfies with $z \in C([0,T]; \R^d)$
    \begin{align*}
        \sigma(x_0 + \Gamma_t(z)) u_t(z) 
        = \sigma(x_t) u(x_t) 
        &= b(x_t) - \widetilde{b}(x_t)
        \\ &= b(x_0 + \Gamma_t(z)) - \widetilde{b}(x_0 + \Gamma_t(z))
    \end{align*}
    which corresponds to the multi-dimensional analogue of \cite[condition (II), p. 291]{MR1800857}. Finally, using $x_0 + \Gamma_t(Z) = X_t$ due to Lemma \ref{lemma: fractional differentiation}, we find  
    \begin{align*}
        &\ \P^{\widetilde{Z}}\left[ \int_0^T \left( b(x_0 + \Gamma_t(z))^{\top}\Sigma_t(z) b(x_0 + \Gamma_t(z)) + \widetilde{b}(x_0 + \Gamma_t(z))^{\top}\Sigma_t(z) \widetilde{b}(x_0 + \Gamma_t(z)) \right)dt < \infty \right]
        \\ &\qquad \qquad \qquad = \P\left[ \int_0^T H(\widetilde{X}_t) dt < \infty \right] = 1
    \end{align*}
    by assumption \eqref{eq: uniform nondegeneracy}. Hence, according to \cite[p. 296]{MR1800857}, $\P^Z|_{\F_T} \ll \P^{\widetilde{Z}}|_{\F_T}$ with the desired Radon-Nikodym derivative. 

    \textit{Step 3.} Next, we consider the general case where $K$ is not necessarily $C^1(\R_+; \R^{d \times d})$, but satisfies \eqref{eq: K asymptotics}. Let $\varepsilon \in (0,t_0)$, and set $K^{\varepsilon}(t) = K(t+\varepsilon)$. Let $X^{\varepsilon}$ be the unique solution of 
    \[
        X^{\varepsilon}_t = x_0 + \int_0^t K^{\varepsilon}(t-s)b(X^{\varepsilon}_s)ds + \int_0^t K^{\varepsilon}(t-s)\sigma(X^{\varepsilon}_s)dB_s,
    \]
    and let $\widetilde{X}^{\varepsilon}$ be defined analogously with $b$ replaced by $\widetilde{b}$. Since $K^{\varepsilon} \in C^1(\R_+; \R^{d \times d})$ is also completely monotone with $K_{\varepsilon}(t_0 - \varepsilon) > 0$, it admits a unique resolvent of the first kind denoted by $L^{\varepsilon}$. Let $Z^{\varepsilon} := Z^{\varepsilon}(X^{\varepsilon}) = L^{\varepsilon} \ast (X^{\varepsilon} - x_0)$, and similarly $\widetilde{Z}^{\varepsilon} = Z^{\varepsilon}(\widetilde{X}^{\varepsilon})$. By \eqref{eq: K asymptotics} an application of dominated convergence gives $K^{\varepsilon} \longrightarrow K$ in $L^2([0,T]; \R^{d \times d})$, from which it is easy to see that 
    \begin{align}\label{eq: 6}
        \lim_{\varepsilon \to 0}\sup_{t \in [0,T]}\E\left[ |X^{\varepsilon}_t - X_t|^p + |\widetilde{X}^{\varepsilon}_t - \widetilde{X}_t|^p \right] = 0, \qquad p \geq 2,
    \end{align}
    e.g. one may modify the arguments given in \cite{MR3934104} to the multi-dimensional case. Hence, using $Z^{\varepsilon} = \int_0^{\cdot} b(X^{\varepsilon}_s)ds + \int_0^{\cdot} \sigma(X_s^{\varepsilon})dB_s$ and $Z = \int_0^{\cdot} b(X_s)ds + \int_0^{\cdot}\sigma(X_s)dB_s$, and analogous expressions for $\widetilde{Z}^{\varepsilon}, \widetilde{Z}$, an application of the Burkholder-Davis-Gundy inequality combined with \eqref{eq: 6} gives 
    \begin{align}\label{eq: 7}
        \lim_{\varepsilon \to 0}\E\left[ \sup_{t \in [0,T]} \left(|Z_t^{\varepsilon} - Z_t|^p + |\widetilde{Z}_t^{\varepsilon} - \widetilde{Z}_t|^p \right) \right] = 0, \qquad p \geq 2.
    \end{align}
    Define $\Gamma^{\varepsilon}$ as $\Gamma$ with $K$ replaced by $K^{\varepsilon}$. According to step 2, we obtain $\P^{Z^{\varepsilon}}|_{\F_T} \ll \P^{\widetilde{Z}^{\varepsilon}}|_{\F_T}$ with Radon-Nikodym derivative given by 
    \[
        \log\ \frac{d\P^{Z^{\varepsilon}}}{d\P^{\widetilde{Z}^{\varepsilon}}}\bigg|_{\F_T}(\widetilde{Z}^{\varepsilon}) = \ell_T(\widetilde{Z}^{\varepsilon})
    \]
    where $\ell_T$ is given by \eqref{eq: ellT general}, and is, in particular, independent of $\varepsilon$.
    
    Let $F \in C_b( C([0,T];\R^d))$, then we obtain
    \begin{align}\label{eq: 2}
        \E\left[ F(Z^{\varepsilon}) \right]
        = \E_{\P^{Z^{\varepsilon}}}\left[ F(z) \right]
        = \E_{\P^{\widetilde{Z}^{\varepsilon}} } \left[ e^{\ell_T(z)} F(z) \right]
        = \E\left[ e^{\ell_T(\widetilde{Z}^{\varepsilon})} F(\widetilde{Z}^{\varepsilon}) \right].
    \end{align}
    Using the particular form of $\ell_T(z)$ combined with \eqref{eq: 6}, \eqref{eq: 7}, and the Lipschitz continuity of the coefficients, we find 
    \begin{align*}
        \ell_T( \widetilde{Z}^{\varepsilon}) &= \int_0^T B_t^-(\widetilde{Z}^{\varepsilon})^{\top} \Sigma_t( \widetilde{Z}^{\varepsilon}) \widetilde{b}( \widetilde{X}_s^{\varepsilon})ds 
        \\ &\qquad + \int_0^T B_t^-( \widetilde{Z}^{\varepsilon})^{\top} \Sigma_t( \widetilde{Z}^{\varepsilon}) \sigma( \widetilde{X}_s^{\varepsilon})dB_s 
        - \frac{1}{2} \int_0^T B_t^-( \widetilde{Z}^{\varepsilon})^{\top}\Sigma_t( \widetilde{Z}^{\varepsilon})B_t^+( \widetilde{Z}^{\varepsilon})dt
        \\ &\longrightarrow \int_0^T B_t^-(\widetilde{Z})^{\top} \Sigma_t( \widetilde{Z}) \widetilde{b}( \widetilde{X}_s)ds 
        \\ &\qquad + \int_0^T B_t^-( \widetilde{Z})^{\top} \Sigma_t( \widetilde{Z}) \sigma( \widetilde{X}_s)dB_s 
        - \frac{1}{2} \int_0^T B_t^-( \widetilde{Z})^{\top}\Sigma_t( \widetilde{Z})B_t^+( \widetilde{Z})dt
        \\ &= \ell_T( \widetilde{Z})
    \end{align*}
    in $L^2(\Omega)$ as $\varepsilon \to 0$, and hence $e^{\ell_T(\widetilde{Z}^{\varepsilon})} \longrightarrow e^{\ell_T(\widetilde{Z})}$ in probability. Let us first assume that 
    \begin{align}\label{eq: 8}
        \E\left[ e^{\ell_T(\widetilde{Z})} \right] = 1.
    \end{align}
    Using \eqref{eq: 7} and $e^{\ell_T(\widetilde{Z}^{\varepsilon})} \longrightarrow e^{\ell_T(\widetilde{Z})}$ in probability, we get $e^{\ell_T(\widetilde{Z}^{\varepsilon})} F(\widetilde{Z}^{\varepsilon}) \longrightarrow e^{\ell_T(\widetilde{Z})} F(\widetilde{Z})$ in probability. Noting that $|e^{\ell_T(\widetilde{Z}^{\varepsilon})} F(\widetilde{Z}^{\varepsilon})| \lesssim e^{\ell_T(\widetilde{Z}^{\varepsilon})}$ since $F$ is bounded, and that $\E\left[ e^{\ell_T(\widetilde{Z})} \right] = 1 = \lim_{\varepsilon \to 0}\E[e^{\ell_T(\widetilde{Z}^{\varepsilon})}]$ by \eqref{eq: 8} and $\E\left[ e^{\ell_T( \widetilde{Z}^{\varepsilon})}\right] = 1$ which follows from $\P^{Z^{\varepsilon}}|_{\F_T} \ll \P^{\widetilde{Z}^{\varepsilon}}|_{\F_T}$ as shown in step 2. Hence, we may apply the generalised dominated convergence theorem to take the limit in \eqref{eq: 2}, which gives
    \[
        \E\left[ F(Z) \right] = \lim_{\varepsilon \to 0 } \E\left[ e^{\ell_T(\widetilde{Z}^{\varepsilon})} F(\widetilde{Z}^{\varepsilon}) \right] = \E\left[ e^{\ell_T(\widetilde{Z})} F(\widetilde{Z}) \right].
    \]
    Since $\P^Z \sim Z$ and $\P^{\widetilde{Z}} \sim \widetilde{Z}$, the assertion is proved, once we have shown \eqref{eq: 8}.

    \textit{Step 4.} We finally prove \eqref{eq: 8}. First, note that 
    \[
        \E\left[ e^{\ell_T(\widetilde{Z})} \right] \leq \liminf_{\varepsilon \to 0}\E\left[ e^{\ell_T( \widetilde{Z}^{\varepsilon})}\right] = 1
    \]
    by the Fatou's Lemma combined with $\E\left[ e^{\ell_T( \widetilde{Z}^{\varepsilon})}\right] = 1$. To prove the converse inequality, let us define the stopping time $\tau_n(x) = \inf\{t \in (0,T] \ : \ |x_t| \geq n \} \wedge T$ where $x \in C([0,T]; \R^d)$. Recall that $u$ is given by \eqref{eq: u}, whence $u_t(z) = u(x_0 + \Gamma_t(z))$ satisfies $u_t(z) = \sigma(x_0 + \Gamma_t(z))^{\top}\Sigma_t(z)B_t^-(z)$. By definition of $\ell_T$, a short computation gives
    \begin{align*}
        \ell_T(\widetilde{Z}) = \int_0^T u_t(\widetilde{Z})^{\top} dB_t - \frac{1}{2}\int_0^T u_t(\widetilde{Z})^{\top}u_t(\widetilde{Z}) dt.
    \end{align*}
    Define $\ell_T^{(n)}(\widetilde{Z})$ by
    \begin{align*}
        \ell^{(n)}_T(\widetilde{Z}) = \int_0^T \1_{\{t\leq \tau_n(\widetilde{X})\}}u_t(\widetilde{Z})^{\top} dB_t - \frac{1}{2}\int_0^T \1_{\{t \leq \tau_n(\widetilde{X})\}}u_t(\widetilde{Z})^{\top}u_t(\widetilde{Z}) dt
    \end{align*}
    where $\widetilde{X} = x_0 + \Gamma_t(\widetilde{Z})$ is a functional of $\widetilde{Z}$ due to Lemma \ref{lemma: fractional differentiation}. Note that $\ell_T(\widetilde{Z}) = \ell_T^{(n)}(\widetilde{Z})$ on the event $\{\tau_n(\widetilde{X}) = T\}$, and that 
    \begin{align*}
        |u_t(\widetilde{Z})|^2
        &= | \sigma(x_0 + \Gamma_t(\widetilde{Z}))^{\top}\Sigma_t(\widetilde{Z})B_t^-(\widetilde{Z})|^2
        \\ &= B_t^-(\widetilde{Z})^{\top} \Sigma_t(\widetilde{Z}) B_t^-(\widetilde{Z})
        \\ &= b(x_0 + \Gamma_t(\widetilde{Z}))^{\top}\Sigma_t(\widetilde{Z})b(x_0 + \Gamma_t(\widetilde{Z})) + \widetilde{b}(x_0 + \Gamma_t(\widetilde{Z}))^{\top}\Sigma_t(\widetilde{Z})\widetilde{b}(x_0 + \Gamma_t(\widetilde{Z})) 
        \\ &\qquad \qquad - 2 \widetilde{b}(x_0 + \Gamma_t(\widetilde{Z}))^{\top}\Sigma_t(\widetilde{Z})b(x_0 + \Gamma_t(\widetilde{Z}))
        \\ &\leq 2 b(\widetilde{X}_t)^{\top}(\sigma(\widetilde{X}_t) \sigma(\widetilde{X}_t)^{\top})^{-1} b(\widetilde{X}_t) + 2\widetilde{b}(\widetilde{X})^{\top}(\sigma(\widetilde{X}_t) \sigma(\widetilde{X}_t)^{\top})^{-1}\widetilde{b}( \widetilde{X}_t)
    \end{align*}
    where we have used $x_0 + \Gamma_t(\widetilde{Z}) = \widetilde{X}_t$ combined with
    \begin{align*}
    &\ 2 \widetilde{b}( \widetilde{X}_t)^{\top}(\sigma(\widetilde{X}_t) \sigma(\widetilde{X}_t)^{\top})^{-1} b( \widetilde{X}_t )
    \\ &\quad \leq 2| (\sigma(\widetilde{X}_t) \sigma(\widetilde{X}_t)^{\top})^{-1/2} \widetilde{b}(\widetilde{X}_t)| \cdot | (\sigma(\widetilde{X}_t) \sigma(\widetilde{X}_t)^{\top})^{-1/2} b(\widetilde{X}_t)|
    \\ &\quad \leq | (\sigma(\widetilde{X}_t) \sigma(\widetilde{X}_t)^{\top})^{-1/2} \widetilde{b}(\widetilde{X}_t)|^2 + | (\sigma(\widetilde{X}_t) \sigma(\widetilde{X}_t)^{\top})^{-1/2} b(\widetilde{X}_t)|^2
    \\ &\quad = \widetilde{b}(\widetilde{X}_t)^{\top}(\sigma(\widetilde{X}_t) \sigma(\widetilde{X}_t)^{\top})^{-1} \widetilde{b}(\widetilde{X}_t) + b(\widetilde{X}_t)^{\top}(\sigma(\widetilde{X}_t) \sigma(\widetilde{X}_t)^{\top})^{-1} b(\widetilde{X}_t) = H(\widetilde{X}_t).
    \end{align*}
    By assumption, $H$ is locally bounded. Thus we find a constant $C(n) > 0$ such that $|u_t(\widetilde{Z})| \leq C(n)$ on the event $\{t \leq \tau_n(\widetilde{X})\}$, and hence 
    \[
        \E\left[ e^{\frac{1}{2}\int_0^T \1_{\{t \leq \tau_n(\widetilde{X})\}} u_n(\widetilde{Z})^{\top}u_t(\widetilde{Z}) dt}\right] < \infty.
    \]
    In particular, it follows from the multi-dimensional analogue of \cite[Theorem 6.1]{MR1800857} that $\E[e^{\ell_T^{(n)}(\widetilde{Z})}] = 1$ for $n \geq 1$. Hence setting $d\P^{(n)} = e^{\ell_T^{(n)}(\widetilde{Z})}d\P$, we obtain
    \begin{align*}
        \E\left[ e^{\ell_T(\widetilde{Z})} \right] 
        \geq \E\left[ e^{\ell_T(\widetilde{Z})} \1_{\{\tau_n(\widetilde{X}) = T\}}\right]
        &= \E\left[ e^{\ell^{(n)}_T(\widetilde{Z})} \1_{\{\tau_n(\widetilde{X}) = T\}}\right]
        \\ &= \P^{(n)}\left[ \tau_n(\widetilde{X}) = T\right]
        = 1 - \P^{(n)}\left[ \tau_n(\widetilde{X}) < T \right].
    \end{align*}
    To bound the last probability, remark that $B^{(n)}_t = B_t - \int_0^t \1_{\{s \leq \tau_n(\widetilde{X})\}} u_s(\widetilde{Z})ds$ is a Brownian motion with respect to $\P^{(n)}$, and $\widetilde{X}$ satisfies with respect to $\P^{(n)}$ the equation
    \begin{align*}
        \widetilde{X}_t &= x_0 + \int_0^t K(t-s)\left( \widetilde{b}(\widetilde{X}_s) + \1_{\{s \leq \tau_n(\widetilde{X})\}} \sigma(\widetilde{X}_s) u(\widetilde{X}_s) \right)ds + \int_0^t K(t-s)\sigma(\widetilde{X}_s)dB_s^{(n)}
        \\ &= x_0 + \int_0^t K(t-s)\left( \1_{\{ \tau_n(\widetilde{X}) < s\}} \widetilde{b}(\widetilde{X}_s) + \1_{\{s \leq \tau_n(\widetilde{X})\}} b(\widetilde{X}_s) \right)ds 
        \\ &\qquad \qquad \qquad \qquad \qquad \qquad \qquad \qquad \qquad + \int_0^t K(t-s)\sigma(\widetilde{X}_s)dB_s^{(n)}
    \end{align*}
    where we have used $\sigma u = b - \widetilde{b}$. Since $b,  \widetilde{b}$ are globally Lipschitz continuous, it has a unique solution which satisfies by \cite[Lemma 3.1]{MR4019885}
    \[
        \sup_{n \geq 1}\sup_{t \in [0,T]}\E_{\P^{(n)}}[ |\widetilde{X}_t|^p ] < \infty, \qquad p \geq 2.
    \]
    Hence, an application of \cite[Lemma 2.4]{MR4019885} shows that its H\"older seminorm has uniformly bounded $p$-moment, whence $\sup_{n \geq 1}\E_{\P^{(n)}}[ \sup_{t \in [0,T]}|\widetilde{X}_t|^p ] < \infty$ for $p \geq 2$. By the Markov inequality, we obtain
    \[
        \P^{(n)}\left[ \tau_n(\widetilde{X}) < T \right] \leq \P^{(n)}\left[ \sup_{t \in [0,T]}|X_t| \geq n \right]
        \leq \frac{1}{n} \sup_{n \geq 1}\E_{\P^{(n)}}\left[ \sup_{t \in [0,T]}|X_t| \right].
    \]
    Thus, we arrive at 
    \[
        \E\left[ e^{\ell_T(\widetilde{Z})} \right] \geq 1 - \P^{(n)}\left[ \tau_n(\widetilde{X}) < T \right]
        \geq 1 - \frac{1}{n} \sup_{n \geq 1}\E_{\P^{(n)}}\left[ \sup_{t \in [0,T]}|X_t| \right].
    \]
    Taking the limit $n \to \infty$ proves the assertion.
\end{proof}

Let $\P^X = \P \circ X^{-1}$ be the law of $X$ on $C(\R_+; \R^d)$, and $\P^{\widetilde{X}} = \P \circ \widetilde{X}^{-1}$ the law of $\widetilde{X}$, respectively. As a corollary, below we derive our first main result \eqref{eq: 4}.

\begin{Corollary}\label{cor: equivalence laws lipschitz}
    Suppose that assumption \ref{A} is satisfied, that the drift and diffusion coefficients $b, \widetilde{b}, \sigma$ are Lipschitz continuous, $\sigma \sigma^{\top}$ is invertible, and $H$ given by \eqref{eq: uniform nondegeneracy} is locally bounded, and \eqref{eq: integrability} holds for some $T > 0$. Then $\P^X|_{\F_T} \ll \P^{\widetilde{X}}|_{\F_T}$ with density
    \[
        \log\ \frac{d\P^X}{d\P^{\widetilde{X}}} \bigg|_{\F_T} = \ell_T(\widetilde{X})
    \]
    and
    \begin{align*}
        \ell_T(\widetilde{X}) &= \int_0^T \left( b(\widetilde{X}_t) - \widetilde{b}(\widetilde{X}_t)\right)^{\top} \left( \sigma( \widetilde{X}_t) \sigma(\widetilde{X}_t)^{\top}\right)^{-1} dZ_t(\widetilde{X})
        \\ &\qquad - \frac{1}{2} \int_0^T \left( b(\widetilde{X}_t) - \widetilde{b}(\widetilde{X}_t)\right)^{\top} \left( \sigma( \widetilde{X}_t) \sigma(\widetilde{X}_t)^{\top}\right)^{-1} \left( b(\widetilde{X}_t) + \widetilde{b}(\widetilde{X}_t)\right)^{\top} dt.
    \end{align*} 
\end{Corollary}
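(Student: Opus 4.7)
The plan is to deduce this corollary directly from Theorem \ref{thm: equivalence of laws} by transporting the equivalence of laws from the $Z$-picture back to the $X$-picture via the deterministic bijection between paths that Lemma \ref{lemma: fractional differentiation} makes available. By the discussion immediately preceding Assumption \ref{A} (or \cite[Lemma 2.4]{MR4019885}), under assumption \ref{A} both $X$ and $\widetilde{X}$ admit modifications with $(\alpha - 1/2 - \varepsilon)$-H\"older continuous sample paths; in particular, one can choose $\eta$ with $\eta + \alpha > 1$ so that the paths lie in the domain on which Lemma \ref{lemma: fractional differentiation}(b)–(c) applies. Define $\Phi(x) := Z(x)$, which is a continuous linear operator from the relevant H\"older space into $C_0^{\eta}([0,T];\R^d)$, with deterministic inverse $\Phi^{-1}(z) = x_0 + \Gamma(z)$. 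Because $Z_t(x)$ depends only on $x|_{[0,t]}$ and $\Gamma_t(z)$ depends only on $z|_{[0,t]}$, the transformation $\Phi$ is causal, so the canonical filtrations satisfy $\F_t^X = \F_t^{Z(X)}$ and $\F_t^{\widetilde{X}} = \F_t^{Z(\widetilde{X})}$ for all $t \leq T$.

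Next, I would invoke Theorem \ref{thm: equivalence of laws}, whose hypotheses are precisely the ones assumed in the corollary, to obtain $\P^{Z}|_{\F_T} \ll \P^{\widetilde{Z}}|_{\F_T}$ together with the explicit density $\ell_T$ from \eqref{eq: ellT general}. Since $\P^X = (\Phi^{-1})_{\#}\P^Z$ and $\P^{\widetilde{X}} = (\Phi^{-1})_{\#}\P^{\widetilde{Z}}$, and since $\Phi$ is a filtration-preserving measurable bijection between the supports of these laws, the pushforward formula transfers absolute continuity to the $X$-picture: $\P^X|_{\F_T} \ll \P^{\widetilde{X}}|_{\F_T}$, and evaluation of the density at $\widetilde{X}$ gives
\[
    \frac{d\P^X}{d\P^{\widetilde{X}}}\bigg|_{\F_T}(\widetilde{X}) \;=\; \frac{d\P^Z}{d\P^{\widetilde{Z}}}\bigg|_{\F_T}(\widetilde{Z}) \;=\; \ell_T(\widetilde{Z}).
\]

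The final step is cosmetic: simplify $\ell_T(\widetilde{Z})$ into the form stated in the corollary. Using $x_0 + \Gamma_t(\widetilde{Z}) = \widetilde{X}_t$ from Lemma \ref{lemma: fractional differentiation}, one immediately obtains
\[
    B_t^-(\widetilde{Z}) = b(\widetilde{X}_t) - \widetilde{b}(\widetilde{X}_t), \qquad B_t^+(\widetilde{Z}) = b(\widetilde{X}_t) + \widetilde{b}(\widetilde{X}_t), \qquad \Sigma_t(\widetilde{Z}) = \bigl(\sigma(\widetilde{X}_t)\sigma(\widetilde{X}_t)^{\top}\bigr)^{-1},
\]
while $dz_s$ in \eqref{eq: ellT general}, once evaluated at $z = \widetilde{Z}$, is literally $dZ_t(\widetilde{X})$. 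Plugging in yields the displayed expression for $\ell_T(\widetilde{X})$.

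The only conceptual step that requires any care is the transfer of the Radon–Nikodym derivative across the filtration restriction $\F_T$; this is the main thing I would write out carefully. It reduces to checking that $\Phi$ and $\Phi^{-1}$ are $(\F_t^X,\F_t^{Z(X)})$- and $(\F_t^{Z(\widetilde{X})},\F_t^{\widetilde{X}})$-measurable for every $t \leq T$, which is immediate from the non-anticipative definitions of $Z$ in \eqref{eq: Z process} and of $\Gamma$ in \eqref{eq: Gamma}. Everything else is routine bookkeeping, since the heavy analytic content — Novikov-type conditions, dominated-convergence approximation by smooth kernels, and the path-dependent Girsanov transformation of \cite{MR1800857} — has already been discharged in the proof of Theorem \ref{thm: equivalence of laws}.
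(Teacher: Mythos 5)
Your proposal is correct and follows essentially the same route as the paper: the paper's proof likewise transfers the equivalence of Theorem \ref{thm: equivalence of laws} through the deterministic inverse pair $x \mapsto Z(x)$, $z \mapsto x_0 + \Gamma(z)$, writing $\E[F(X)] = \E_{\P^{\widetilde{Z}}}[e^{\ell_T(z)}F(x_0+\Gamma(z))] = \E[e^{\ell_T(Z(\widetilde{X}))}F(\widetilde{X})]$ for bounded measurable $F$, which is just the test-function version of your pushforward argument, and then rewrites $\ell_T(\widetilde{Z})$ via $x_0 + \Gamma_t(\widetilde{Z}) = \widetilde{X}_t$ exactly as you do. One justification in your write-up is imprecise, though easily repaired: in the singular case of assumption \ref{A}, the $(\alpha - 1/2 - \varepsilon)$-H\"older regularity of $X$ only gives, via Lemma \ref{lemma: fractional differentiation}(a), that $Z(X)$ has the same exponent, and for $\alpha \in (1/2,3/4]$ this does not produce an $\eta$ with $\eta + \alpha > 1$; the relation $\Gamma(Z(X)) = X - x_0$ should instead be justified, as in Step 1 of the proof of Theorem \ref{thm: equivalence of laws}, by the semimartingale representation $dZ_t = b(X_t)dt + \sigma(X_t)dB_t$, which yields $(1/2-\eta)$-H\"older paths for $Z(X)$ and hence $1/2 - \eta + \alpha > 1$ for $\eta$ small, so that Lemma \ref{lemma: fractional differentiation}(c) (whose domain condition is on $Z(x)$, not on $x$) applies.
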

\begin{proof}
    Let $F: C([0,T]; \R^d) \longrightarrow \R_+$ be bounded and measurable. Then using first Lemma \ref{lemma: fractional differentiation} and then Theorem \ref{thm: equivalence of laws}, we get
    \begin{align*}
        \E\left[ F(X) \right]
        &= \E\left[ F(x_0 + \Gamma(Z(X)) \right]
        \\ &= \E_{\P^Z}\left[ F(x_0 + \Gamma(z)) \right]
        \\ &= \E_{\P^{\widetilde{Z}}}\left[ e^{\ell_T(\widetilde{z})} F(x_0 + \Gamma(\widetilde{z}))\right]
        \\ &= \E\left[ e^{\ell_T(\widetilde{Z})} F(x_0 + \Gamma(\widetilde{Z})) \right]
        = \E\left[ e^{\ell_T(Z(\widetilde{X}))} F(\widetilde{X}) \right].
    \end{align*}
    This proves the assertion. 
\end{proof}

The required Lipschitz continuity is not essential, and may be replaced by other conditions found in \cite{MR1800857}. Furthermore, Lipschitz continuity may also be removed by additional approximation arguments, similarly to step 4 in the proof of Theorem \ref{thm: equivalence of laws}. Finally, under the conditions of Theorem \ref{thm: equivalence of laws} or Corollary \ref{cor: equivalence laws lipschitz}, and if additionally to \eqref{eq: integrability} also $\int_0^T H(X_t)dt < \infty$ holds a.s., then we even get $\P^Z|_{\F_T} \sim \P^{\widetilde{Z}}|_{\F_T}$ and $\P^X|_{\F_T} \sim \P^{\widetilde{X}}|_{\F_T}$.

\subsection{Application to the Volterra Ornstein-Uhlenbeck process}

In this section, we apply the previous equivalence of measures for the particular case of the Volterra Ornstein-Uhlenbeck process defined by \eqref{eq: VOU}. For given $(b,\beta)$, let $\P_{b,\beta}$ be the law of $X$ given by \eqref{eq: VOU} with drift parameters $(b,\beta)$. We obtain the following result on the MLE as defined in \eqref{eq: minimum MLE}.

\begin{Theorem}\label{lemma: VOU Girsanov}
    Suppose that the Volterra kernel $K$ satisfies assumption \ref{A}, and that $\sigma \sigma^{\top}$ is invertible. Define the $d \times d$-matrices $A_T(X), F_T(X)$ by
    \begin{align*}
    A_T(X)_{kj} &= \frac{1}{T} \int_0^T X_t^j ( (\sigma \sigma^{\top})^{-1} dZ_t)_k - \frac{1}{T}\int_0^T X_t^j dt \frac{((\sigma \sigma^{\top})^{-1} Z_T)_k}{T},
    \\ F_{T}(X)_{kj} &= \frac{1}{T}\int_0^T X_t^j X_t^k dt - \left(\frac{1}{T}\int_0^T X_t^j dt \right) \left( \frac{1}{T}\int_0^T X_t^k dt \right),
    \end{align*}    
    and assume that $\P_{b,\beta}\left[\mathrm{det}(F_T(X)) \neq 0 \right] = 1$. Then the corresponding MLE for $(b,\beta)$ are given by
    \begin{align*}
        \widehat{b}_T &=  (\sigma \sigma^{\top})^{-1}\frac{Z_T}{T} - A_T(X) F_T(X)^{-1} \frac{1}{T}\int_0^T X_t dt,
        \\  \widehat{\beta}_T &= (\sigma \sigma^{\top}) A_T(X) F_T(X)^{-1}.
    \end{align*}    
\end{Theorem}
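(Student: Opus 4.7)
My plan is to apply Corollary~\ref{cor: equivalence laws lipschitz} to the VOU model with affine drift $b_{(b,\beta)}(x) = b + \beta x$ and reference parameter $\vartheta_0 = (0,0)$ (so that $\widetilde b \equiv 0$), and then maximise the resulting concave quadratic log-likelihood via its first-order conditions. The hypotheses of the corollary are routine to check for \eqref{eq: VOU}: Assumption~\ref{A} is imposed; the affine drift and constant diffusion $\sigma$ are globally Lipschitz; $\sigma\sigma^{\top}$ is invertible by assumption; the function $H$ in \eqref{eq: uniform nondegeneracy} is a polynomial of degree two in $x$ and hence locally bounded; and $\int_0^T H(X_t)\,dt<\infty$ a.s.\ follows by Fubini from the moment bound $\sup_{t\leq T}\E[|X_t|^p]<\infty$ established in \cite[Lemma~3.1]{MR4019885}.

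Specialising \eqref{eq: ellT general} to $\widetilde b \equiv 0$ and exploiting the symmetry of $(\sigma\sigma^{\top})^{-1}$, the cross term between $b_\vartheta$ and $b_{\vartheta_0}$ vanishes and maximising $\ell_T(\vartheta, \vartheta_0; X)$ is equivalent to maximising
\[
    \mathcal{L}_T(b,\beta) \;=\; \int_0^T (b + \beta X_t)^{\top}(\sigma\sigma^{\top})^{-1}\, dZ_t(X) \;-\; \frac{1}{2}\int_0^T (b + \beta X_t)^{\top}(\sigma\sigma^{\top})^{-1}(b + \beta X_t)\, dt.
\]
The functional $\mathcal L_T$ is a concave quadratic in the full parameter $(b,\beta) \in \R^d \times \R^{d\times d}$. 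The assumption $\det F_T(X)\neq 0$ is equivalent to the invertibility of the empirical Gram matrix associated with the regressor $\binom{1}{X_t}$; via a Schur-complement argument, this forces strict concavity, so the unique maximiser is determined by the first-order conditions $\nabla_b \mathcal L_T = 0$ and $\nabla_\beta \mathcal L_T = 0$.

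Computing the two gradients and cancelling the common invertible factor $(\sigma\sigma^{\top})^{-1}$ yields the coupled linear system $Z_T = Tb + \beta \int_0^T X_t\,dt$ and $\int_0^T dZ_t\,X_t^{\top} = b\int_0^T X_t^{\top}\,dt + \beta \int_0^T X_t X_t^{\top}\,dt$, i.e.\ a $2\times 2$ block linear system in $(b,\beta)$ whose Schur complement with respect to its scalar $(1,1)$-entry $T$ is precisely $T\,F_T(X)$. Eliminating $b$ from the first equation and substituting into the second isolates $\widehat\beta_T$ in the asserted closed form involving $A_T(X)$ and $F_T(X)^{-1}$, and back-substitution into the $\nabla_b$ equation then yields $\widehat b_T$.

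\emph{Main obstacle.} Nothing particularly deep remains, since all the analytic heavy lifting is done by Corollary~\ref{cor: equivalence laws lipschitz}. The only care required is bookkeeping in the matrix-valued differentiation with respect to the parameter $\beta$ and in carrying out the Schur-complement elimination cleanly; strict concavity (hence the uniqueness and optimality of the critical point) must be attributed explicitly to the standing hypothesis $\det F_T(X)\neq 0$.
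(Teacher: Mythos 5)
Your proposal is correct and follows essentially the same route as the paper: invoke Corollary~\ref{cor: equivalence laws lipschitz} with reference parameter $(b_0,\beta_0)=(0,0)$, observe that $\ell_T$ is a quadratic form in $(b,\beta)$, and solve the first-order conditions, eliminating $b$ from the first normal equation and using the invertibility of $F_T(X)$ to isolate $\widehat{\beta}_T$ (your Schur-complement phrasing is just the paper's elimination, and your explicit concavity remark is a small bonus the paper leaves implicit). No substantive difference in method or missing step.
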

\begin{proof}
    To simplify the notation, let us choose $b_0 = \beta_0 = 0$ for the reference measure. It is clear that all conditions of Corollary \ref{cor: equivalence laws lipschitz} are satisfied. Hence $\P_{b,\beta}|_{\F_T} \sim \P_{0,0}|_{\F_T}$ on $\F_T$ with Radon-Nikodym derivative given by
    \begin{align*}
        \log\ \frac{d\P_{b,\beta}}{d\P_{0,0}}\bigg|_{\F_T}(X) = \ell_T(b,\beta; X),
    \end{align*}
    where
    \begin{align*}
        \ell_T(b,\beta; X) &= \int_0^T \left( b + \beta X_t\right)^{\top} (\sigma \sigma^{\top})^{-1} dZ_t(X) 
        \\ &\qquad \qquad - \frac{1}{2} \int_0^T \left( b + \beta X_t\right)^{\top} (\sigma \sigma^{\top})^{-1} \left( b + \beta X_t\right) dt.
    \end{align*}
    The maximum likelihood estimator for $(b,\beta)$ is defined as the maximum of the likelihood ratio $\ell_T(b, \beta; X)$. Remark that the drift is a linear function in $(b,\beta)$, whence $\ell_T$ is a quadratic form in $(b,\beta)$. Hence, to maximise it, it suffices to solve the first-order conditions $\nabla \ell_T(b,\beta; X) = 0$. Using a linear expansion for the perturbation $\ell_T(b + b_0, \beta + \beta_0; X)$ allows us to determine its derivative. Namely, we obtain
\begin{align*}
    \ell_T(b + b_0, \beta + \beta_0; X) &= \ell_T(b,\beta; X) + \int_0^T \left( b_0 + \beta_0 X_t\right)^{\top} (\sigma \sigma^{\top})^{-1} dZ_t(X) 
    \\ &\qquad \qquad - \int_0^T \left( b_0 + \beta_0 X_t\right)^{\top} (\sigma \sigma^{\top})^{-1} \left( b + \beta X_t\right) dt
    \\ &\qquad \qquad - \frac{1}{2} \int_0^T \left( b_0 + \beta_0 X_t\right)^{\top} (\sigma \sigma^{\top})^{-1} \left( b_0 + \beta_0 X_t\right) dt.
\end{align*}
Hence, $\nabla \ell_T(b,\beta; X) = 0$ is equivalent to
\[
    \int_0^T \left( b_0 + \beta_0 X_t\right)^{\top} (\sigma \sigma^{\top})^{-1} dZ_t(X) 
    - \int_0^T \left( b_0 + \beta_0 X_t\right)^{\top} (\sigma \sigma^{\top})^{-1} \left( b + \beta X_t\right) dt = 0
\]
for all $b_0, \beta_0$. Now let us choose, for $j = 1,\dots, d$, $\beta_0$ such that $\beta_0 X_t = a_j X_t^j$, where $a_j \in \R^d$ is some vector. Then we obtain
\begin{align*}
    &\ b_0^{\top}\left(\int_0^T (\sigma \sigma^{\top})^{-1} dZ_t(X) - \int_0^T (\sigma \sigma^{\top})^{-1} \left( b + \beta X_t\right) dt \right)
    \\ &\qquad + a_j^{\top}\left( \int_0^T X_t^{j} (\sigma \sigma^{\top})^{-1} dZ_t(X) 
    - \int_0^T X_t^{j} (\sigma \sigma^{\top})^{-1} \left( b + \beta X_t\right) dt \right) = 0.
\end{align*}
Since $b_0, a_1,\dots, a_d$ are arbitrary, we arrive at the system of equations
\begin{align*}
    0 &= \int_0^T (\sigma \sigma^{\top})^{-1} dZ_t(X) - \int_0^T (\sigma \sigma^{\top})^{-1} \left( b + \beta X_t\right) dt
    \\ 0 &= \int_0^T X_t^{j} (\sigma \sigma^{\top})^{-1} dZ_t(X) 
    - \int_0^T X_t^{j} (\sigma \sigma^{\top})^{-1} \left( b + \beta X_t\right) dt
\end{align*}
whose solution determines the desired MLE $(\widehat{b}_T, \widehat{\beta}_T)$. Solving the first equation for $b$ gives the estimator
\begin{align}\label{eq: b estimator MLE}
    \widehat{b}_T = \frac{(\sigma \sigma^{\top})^{-1}Z_T}{T} - (\sigma \sigma^{\top})^{-1} \widehat{\beta}_T \frac{1}{T}\int_0^T X_t dt.
\end{align}
Solving now the second equation for $\beta$ and then inserting the formula for $\widehat{b}_T$, gives the system of equations $(\sigma \sigma^{\top})^{-1}\widehat{\beta}_T F_T(X)e_j = A_T(X)e_j$ for each $j = 1,\dots, d$. Since $F_T(X)$ is invertible, the vectors $F_T(X)e_j$ form a basis, and hence the above system of equations uniquely determines $\widehat{\beta}_T$, i.e. $(\sigma \sigma^{\top})^{-1}\widehat{\beta}_T F_T(X) = A_T(X)$. This yields the desired expression for $\widehat{\beta}_T$. Inserting the latter into \eqref{eq: b estimator MLE} gives the desired form of the estimator $\widehat{b}_T$ and proves the assertion.
\end{proof} 

From the particular form of $A_T, F_T$, it is clear that, to study the asymptotic behaviour of these estimators, limit theorems such as the Law of Large Numbers play a central role. Thus, based on the notion of asymptotic independence, in Section \ref{sec:3_birkhoff_theorem}, we establish a general result on the Law of Large Numbers for stochastic processes that do not need to satisfy the Markov property. Afterwards, in Section \ref{sec:4_ergodicity_vou}, we study the particular case of the Volterra Ornstein-Uhlenbeck process, while the convergence of the MLE will be treated in Section \ref{sec:5_MLE}. To simplify our arguments and keep this work at a reasonable length, we focus on the one-dimensional case $d = 1$. However, following the ideas provided in \cite{MR2471289}, we expect that also the multi-variate case could be studied.

\section{A simple uniform Birkhoff Theorem}\label{sec:3_birkhoff_theorem}

In this section, we provide an abstract set of conditions that allow us to establish the Law of Large Numbers for a stochastic process that is not necessarily a Markov process. The latter allows us to prove the consistency of the method of moments, but is also crucial for the study of consistency and asymptotic normality of the MLE. Let $\mathbb{T} = [0,\infty)$ with $\tau(ds) = ds$ or $\mathbb{T} = \N_0$ with $\tau$ the counting measure on $\mathbb{T}$. The choice $\tau(ds) = ds$ corresponds to continuous-time averages, while the other case captures the case of discrete-time averages. 

\begin{Theorem}\label{lemma: Birkhoff ergodic theorem}
 Let $\Theta \subset \R^d$ be a parameter set, $(\Omega, \F, \P_{\theta})_{\theta \in \Theta}$ a family of probability spaces, and let $(Y_t)_{t \in \mathbb{T}}$ be a $\R$-valued measurable process on $(\Omega, \F, \P_{\theta})$ for each $\theta \in \Theta$. Suppose that $\sup_{\theta \in \Theta}\sup_{t \in \mathbb{T}}\E_{\theta}[|Y_t|^2] < \infty$ and there exists $m: \Theta \longrightarrow \R$ such that
 \begin{align}\label{eq: Birkhoff first moment}
  \lim_{t \to \infty}\sup_{\theta \in \Theta}\left|\frac{1}{t}\int_{[0,t)} \E_{\theta}[Y_s]\tau(ds) - m(\theta)\right| = 0.
 \end{align}
 If $(Y_t)_{t \in \mathbb{T}}$ has asymptotically vanishing covariance in the ceasaro sense uniformly in $\theta \in \Theta$, i.e.,
 \begin{align}\label{eq: asymptotic covariance}
  \lim_{t \to \infty}\sup_{\theta \in \Theta}\frac{1}{t^2}\int_{[0,t)^2} \mathrm{cov}_{\theta}(Y_s, Y_u) \tau(du)\tau(ds) = 0,
 \end{align}
 then $Y$ satisfies the Birkhoff ergodic theorem in the sense that
  \begin{align*}
   \lim_{t \to \infty} \sup_{\theta \in \Theta}\E_{\theta}\left[\left|\frac{1}{t}\int_{[0,t)} Y_s \tau(ds) - m(\theta)\right|^2 \right] = 0.
  \end{align*}
  Moreover, if there exist $p \in [2,\infty)$ and $\e > 0$ such that 
  \begin{align}\label{eq: moment bound}
    \sup_{\theta \in \Theta}\sup_{t \in \mathbb{T}} \E_{\theta}[|Y_t|^{p+\e}] < \infty
  \end{align}
  then even
  \begin{align*}
   \lim_{t \to \infty} \sup_{\theta \in \Theta}\E_{\theta}\left[\left|\frac{1}{t}\int_{[0,t)} Y_s \tau(ds) - m(\theta)\right|^p \right] = 0.
  \end{align*}
\end{Theorem}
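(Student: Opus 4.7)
The plan is in two stages: first establish the $L^2$ convergence uniformly in $\theta$ directly from the two Cesaro-type hypotheses, and then bootstrap to $L^p$ via a uniform-integrability argument powered by the additional $(p+\varepsilon)$-moment bound \eqref{eq: moment bound}.

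For the first stage I would introduce the shorthand $\bar Y_t := \frac{1}{t}\int_{[0,t)} Y_s\,\tau(ds)$ and split
$$\bar Y_t - m(\theta) = \Bigl(\bar Y_t - \tfrac{1}{t}\int_{[0,t)} \E_\theta[Y_s]\,\tau(ds)\Bigr) + \Bigl(\tfrac{1}{t}\int_{[0,t)} \E_\theta[Y_s]\,\tau(ds) - m(\theta)\Bigr).$$
The second summand is deterministic and goes to zero uniformly in $\theta$ by \eqref{eq: Birkhoff first moment}. For the first, the uniform second-moment bound justifies Fubini's theorem and yields
$$\E_\theta\Bigl[\bigl(\bar Y_t - \tfrac{1}{t}\int_{[0,t)} \E_\theta[Y_s]\,\tau(ds)\bigr)^2\Bigr] = \frac{1}{t^2}\int_{[0,t)^2} \mathrm{cov}_\theta(Y_s, Y_u)\,\tau(du)\,\tau(ds),$$
which vanishes uniformly in $\theta$ by \eqref{eq: asymptotic covariance}. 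Combining the two estimates via $(a+b)^2 \leq 2a^2+2b^2$ then yields the uniform $L^2$ statement.

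For the $L^p$ upgrade with $p \geq 2$, I would first observe that, for $t$ large enough that $t^{-1}\tau|_{[0,t)}$ is a probability measure (always in continuous time, and for $t \geq 1$ in the discrete case), Jensen's inequality combined with \eqref{eq: moment bound} gives
$$\sup_{\theta \in \Theta}\sup_{t} \E_\theta\bigl[|\bar Y_t|^{p+\varepsilon}\bigr] \leq \sup_{\theta,s}\E_\theta\bigl[|Y_s|^{p+\varepsilon}\bigr] < \infty.$$
Since $|m(\theta)|$ is likewise uniformly bounded, $V_t^\theta := \bar Y_t - m(\theta)$ satisfies $C := \sup_{t,\theta}\E_\theta[|V_t^\theta|^{p+\varepsilon}] < \infty$. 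A truncation at level $M > 0$ then gives
$$\E_\theta\bigl[|V_t^\theta|^p\bigr] \leq M^{p-2}\,\E_\theta\bigl[|V_t^\theta|^2\bigr] + M^{-\varepsilon}\,\E_\theta\bigl[|V_t^\theta|^{p+\varepsilon}\bigr] \leq M^{p-2}\,\E_\theta\bigl[|V_t^\theta|^2\bigr] + CM^{-\varepsilon},$$
using $|z|^{p-2} \leq M^{p-2}$ on $\{|z| \leq M\}$ and $|z|^p \leq |z|^{p+\varepsilon}M^{-\varepsilon}$ on $\{|z| > M\}$. Given $\eta > 0$, I would first choose $M$ so that $CM^{-\varepsilon} < \eta/2$, and then use the already-established uniform $L^2$ convergence to select $t_0$ such that $M^{p-2}\sup_{\theta}\E_\theta[|V_t^\theta|^2] < \eta/2$ for all $t \geq t_0$; this delivers the desired uniform $L^p$ convergence.

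No serious obstacle stands out: the argument is a classical variance-expansion followed by a standard uniform-integrability step. The main point requiring care is that every constant must be tracked independently of $\theta$; the quantitative hypotheses \eqref{eq: Birkhoff first moment}, \eqref{eq: asymptotic covariance} and \eqref{eq: moment bound} are precisely what enables this, and the same Fubini and Jensen manipulations work verbatim in the continuous-time and discrete-time settings, since $t^{-1}\tau|_{[0,t)}$ is a probability measure in both cases for sufficiently large $t$.
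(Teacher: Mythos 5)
Your proposal is correct and follows essentially the same route as the paper: the identical bias--fluctuation decomposition with Fubini reducing the variance term to the Cesàro covariance for the $L^2$ statement, followed by a uniform-integrability/truncation step using the $(p+\varepsilon)$-moment bound (plus Jensen for the averages and the uniform boundedness of $m(\theta)$) for the $L^p$ upgrade. The only cosmetic difference is that you use the pointwise interpolation bound $|z|^p \leq M^{p-2}|z|^2 + M^{-\varepsilon}|z|^{p+\varepsilon}$ where the paper truncates on the event $\{|Z_t - m(\theta)| > R\}$ and lets $R \to \infty$; the underlying argument is the same.
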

\begin{proof}
    We only prove the continuous time case. First note that 
    \[
     \frac{1}{t}\int_0^t Y_sds - m(\theta) = \frac{1}{t}\int_0^t (Y_s - \E_{\theta}[Y_s])ds + \frac{1}{t}\int_0^t \E_{\theta}[Y_s]ds - m(\theta),
    \]
    and hence
    \begin{align*}
        \left( \frac{1}{t}\int_0^t Y_sds - m(\theta) \right)^2
        &= \left( \frac{1}{t}\int_0^t (Y_s - \E_{\theta}[Y_s])ds \right)^2 
        \\ &\qquad + 2 \left( \frac{1}{t}\int_0^t (Y_s - \E_{\theta}[Y_s])ds \right) \left( \frac{1}{t}\int_0^t \E_{\theta}[Y_s]ds - m \right) 
        \\ &\qquad + \left( \frac{1}{t}\int_0^t \E_{\theta}[Y_s]ds - m \right)^2.
    \end{align*}
    Taking expectations and noting that the second term equals zero, we arrive at 
    \begin{align}\label{eq: Birkhoff formula}
    \E_{\theta}\left[ \left|\frac{1}{t}\int_{0}^t Y_sds - m(\theta) \right|^2  \right]
    = \frac{1}{t^2}\int_0^t \int_0^t \mathrm{cov}_{\theta}(Y_u, Y_s)du ds + \left( \frac{1}{t}\int_0^t \E_{\theta}[Y_u] du - m \right)^2.
    \end{align}
    The latter expression tends to zero, uniformly in $\theta \in \Theta$. For the second assertion, let $Z_t = \frac{1}{t}\int_{[0,t)} Y_s \tau(ds)$. Then 
    \begin{align*}
        \E_{\theta}\left[\left|Z_t - m(\theta)\right|^p \right] &\leq R^{p-2} \sup_{\theta \in \Theta}\E_{\theta}\left[ |Z_t - m(\theta)|^2 \right] 
        + \E_{\theta}\left[\1_{\{|Z_t - m(\theta)| > R\}} |Z_t - m(\theta)|^p \right].
    \end{align*}
    The first term converges, for fixed $R$, to zero as $t \to \infty$. To bound the second term, note that $\{|Z_t - m| > R\} \subset \{|Z_t| + |m(\theta)| > R\} \subset \{|Z_t| > R/2 \} \cup \{ |m(\theta)| > R/2\}$. It is easy to see that $\sup_{\theta \in \Theta}|m(\theta)| < \infty$. Hence we may take $R$ large enough (independent of $\theta$) such that $\{ |m(\theta)| > R/2\} = \emptyset$ holds for each $\theta \in \Theta$. This gives
    \begin{align*}
        \E_{\theta}\left[\1_{\{|Z_t - m(\theta)| > R\}} |Z_t - m(\theta)|^p \right]
        &\leq 2^{p-1}\E_{\theta}\left[\1_{\{|Z_t| > R/2\}} |Z_t|^p \right]
        \\ &\qquad + 2^{p-1}\left(\sup_{\theta \in \Theta}|m(\theta)|^p \right) \E_{\theta}\left[\1_{\{|Z_t| > R/2\}} \right]
        \\ &\leq 2^{p-1}\left( \frac{2^{\e}}{R^{\e}} + \frac{2^{p + \e}}{R^{p + \e}}\sup_{\theta \in \Theta}|m(\theta)|^p\right) \sup_{\theta \in \Theta}\sup_{t \in \mathbb{T}}\E_{\theta}[|Z_t|^{p+\e}].
    \end{align*}
    To summarize, letting $t \to \infty$, we obtain for each fixed $R$ sufficiently large
    \begin{align*}
        \limsup_{t \to \infty}\E_{\theta}\left[\left|Z_t - m(\theta)\right|^p \right] 
        \leq 2^{p-1}\left( \frac{2^{\e}}{R^{\e}} + \frac{2^{p + \e}}{R^{p + \e}}\sup_{\theta \in \Theta}|m(\theta)|^p\right) \sup_{\theta \in \Theta}\sup_{t \in \mathbb{T}}\E_{\theta}[|Z_t|^{p+\e}].
    \end{align*}
    Letting now $R \to \infty$, proves the assertion.
\end{proof}

Here and below we work with weak convergence uniformly in a given set of parameters $\Theta$. The precise definition and some related properties are given in the appendix. Condition \eqref{eq: asymptotic covariance} can be seen as a variant of mixing conditions often studied for stationary processes, see \cite{MR2178042} for an overview on this topic. 

\begin{Remark}\label{Remark sufficiency}
    Suppose that there exists a family of probability measures $\pi_{\theta}$ on $\R$ such that $\mathcal{L}_{\theta}(Y_t) \Longrightarrow \pi_{\theta}$ for $t \to \infty$, and $\mathcal{L}_{\theta}(Y_s,Y_t) \Longrightarrow \pi_{\theta} \otimes \pi_{\theta}$ as $s,t \to \infty$ with $|s-t| \to \infty$ uniformly on $\Theta$. Moreover, suppose that \eqref{eq: moment bound} holds. Then the assumptions of Lemma \ref{lemma: Birkhoff ergodic theorem} are satisfied with $m(\theta) = \int_{\R}x \pi_{\theta}(dx)$.
\end{Remark}
\begin{proof}
    Since $\mathcal{L}_{\theta}(Y_t) \Longrightarrow \pi_{\theta}$ and $\mathcal{L}_{\theta}(Y_s,Y_t) \Longrightarrow \pi_{\theta} \otimes \pi_{\theta}$, in view of the uniform moment bound \eqref{eq: moment bound}, Proposition \ref{prop: P characterization} gives $\E_{\theta}[Y_s] \longrightarrow m(\theta)$ as $s \to \infty$ uniformly in $\theta \in \Theta$ and $\mathrm{cov}_{\theta}(Y_s,Y_t) \longrightarrow 0$ uniformly in $\theta$. Thus, conditions \eqref{eq: Birkhoff first moment} and \eqref{eq: asymptotic covariance} are satisfied. 
\end{proof}

An application of Theorem \ref{lemma: Birkhoff ergodic theorem} and the above remark for the particular choice $Y_t = f(X_t)$, allows us to deduce the Law of Large Numbers for a given stochastic process $X$ and a suitable class of test functions $f$. To verify the assumptions \eqref{eq: Birkhoff first moment} and \eqref{eq: asymptotic covariance}, it is natural to show that $X$ is \textit{asymptotically independent} in the sense of \eqref{eq: asymptotic independence}. Let us remark that Theorem \ref{lemma: Birkhoff ergodic theorem} and Remark \ref{Remark sufficiency} allow for weak convergence uniformly in the parameter set $\Theta$, in which case we obtain a uniform Law of Large Numbers. For other related results in the direction of Birkhoff theorems for Markov processes, we refer to \cite{MR3684455}. 

Thus, to establish a Law of Large Numbers, we focus on the verification of the asymptotic independence \eqref{eq: asymptotic independence} probably uniformly over $\Theta$. For Gaussian processes, or more generally, affine processes, such a property may be verified through the convergence of the characteristic function (see Section \ref{sec:4_ergodicity_vou} for the VOU process). Furthermore, if $X$ is a Markov process, asymptotic independence follows from the convergence in the Wasserstein distance, which is a well-studied property for classical Markov processes. For non-affine Volterra processes, property \eqref{eq: asymptotic independence} may be reduced to the Markov case by considering Markovian lifts.

\begin{Lemma}\label{lemma: strong consistency}
    Let $(Y_t)_{t \geq 0}$ be a stochastic process on $\R$. Suppose that there exists $p \in [1,\infty)$ and $a \in \R$ such that $\sup_{t \geq 0}\E[|Y_t|^p] < \infty$ and
    $\frac{1}{t}\int_0^t Y_s ds \longrightarrow a$ in $L^p(\Omega)$. Then 
    \[
     \P\left[ \lim_{n \to \infty}\frac{1}{t_n}\int_0^{t_n}Y_s ds = a \right] = 1
    \] 
    holds for each sequence $(t_n)_{n \geq 1}$ that satisfies $t_n \nearrow \infty$ and
    \begin{align*}
     \sum_{n=1}^{\infty}\left| 1 - \frac{t_n}{t_{n+1}}\right|^p < \infty.
    \end{align*}
\end{Lemma}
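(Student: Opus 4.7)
My plan is to exploit the summability of $|1-t_n/t_{n+1}|^p$ to obtain $L^p$-control of the consecutive increments of $S_{t_n} := \frac{1}{t_n}\int_0^{t_n}Y_s\, ds$, and then combine this with the given $L^p$-convergence to conclude almost sure convergence along $(t_n)$.

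The first and main computational step is the algebraic identity
\begin{align*}
    S_{t_{n+1}} - S_{t_n} = \frac{1}{t_{n+1}}\int_{t_n}^{t_{n+1}} Y_s\, ds - \Big(1-\tfrac{t_n}{t_{n+1}}\Big)\,S_{t_n},
\end{align*}
which follows from $t_{n+1} S_{t_{n+1}} = t_n S_{t_n} + \int_{t_n}^{t_{n+1}}Y_s\, ds$. Applying Minkowski's inequality together with $\sup_{s\ge0}\|Y_s\|_p<\infty$ and $\sup_n\|S_{t_n}\|_p<\infty$ (the latter being automatic from convergence in $L^p$), I would obtain
\[
    \|S_{t_{n+1}} - S_{t_n}\|_p \leq C\,|1-t_n/t_{n+1}|
\]
for some constant $C$ independent of $n$. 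Raising to the $p$-th power and invoking the summability hypothesis yields $\sum_n \E[|S_{t_{n+1}} - S_{t_n}|^p]<\infty$, and by Fubini
\[
    \sum_{n=1}^\infty |S_{t_{n+1}} - S_{t_n}|^p <\infty \quad \text{almost surely.}
\]

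The $L^p$-convergence gives, via Chebyshev and the first Borel--Cantelli lemma applied to a subsequence $(n_k)$ along which $\|S_{t_{n_k}} - a\|_p^p$ decays faster than geometrically, an almost sure convergence $S_{t_{n_k}}\to a$. To promote this to the full sequence, which I expect to be the main obstacle, I would bound, for $n_k \leq n < n_{k+1}$,
\[
    |S_{t_n} - S_{t_{n_k}}| \leq \sum_{j=n_k}^{n-1}|S_{t_{j+1}} - S_{t_j}|,
\]
and control the right-hand side via Hölder's inequality in terms of the almost surely vanishing tail $\sum_{j\ge n_k}|S_{t_{j+1}} - S_{t_j}|^p$. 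By choosing $(n_k)$ inductively so that simultaneously $\|S_{t_{n_k}} - a\|_p^p$ and the expected tail $\sum_{j\ge n_k}\E[|S_{t_{j+1}} - S_{t_j}|^p]$ decay quickly enough to absorb the Hölder factor $(n_{k+1}-n_k)^{p-1}$, a final Markov--Borel--Cantelli argument delivers $\sup_{n_k\le n\le n_{k+1}}|S_{t_n} - S_{t_{n_k}}|\to 0$ almost surely, and the triangle inequality concludes $S_{t_n}\to a$ almost surely.
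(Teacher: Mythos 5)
Your first two steps are correct and coincide with the paper's own argument: the identity $S_{t_{n+1}}-S_{t_n}=\frac{1}{t_{n+1}}\int_{t_n}^{t_{n+1}}Y_s\,ds-(1-\tfrac{t_n}{t_{n+1}})S_{t_n}$ is exactly the decomposition the paper uses for $Z_t=\frac1t\int_0^tY_s\,ds-a$, and the resulting bound $\|S_{t_{n+1}}-S_{t_n}\|_{L^p(\Omega)}\le C\,|1-t_n/t_{n+1}|$, hence $\sum_n\E[|S_{t_{n+1}}-S_{t_n}|^p]<\infty$ and $\sum_n|S_{t_{n+1}}-S_{t_n}|^p<\infty$ a.s., is the paper's key estimate. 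Where you diverge is the passage to almost sure convergence: the paper concludes directly that the telescoping series $\sum_n(S_{t_{n+1}}-S_{t_n})$ converges a.s. and identifies the limit with the $L^p$ limit $a$ (an argument that is complete as written when $p=1$, since then the increments are summable in $L^1$ and the series converges absolutely a.s.), while you attempt the same conclusion via a subsequence $(n_k)$, a pathwise H\"older bound over blocks, and Borel--Cantelli.

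That last step is a genuine gap: the inductive choice of $(n_k)$ you invoke need not exist. You need simultaneously (i) $\|S_{t_{n_{k+1}}}-a\|_p^p$ summable in $k$, which forces $n_{k+1}$ to be as large as the rate-free $L^p$ convergence demands, and (ii) $\sum_k (n_{k+1}-n_k)^{p-1}\sum_{j\ge n_k}\E[|S_{t_{j+1}}-S_{t_j}|^p]<\infty$ for the Markov--Borel--Cantelli control of $\sup_{n_k\le n\le n_{k+1}}|S_{t_n}-S_{t_{n_k}}|$. But the tail $\sum_{j\ge n_k}\E[|S_{t_{j+1}}-S_{t_j}|^p]$ is fixed once $n_k$ is chosen, carries no quantitative decay, and does not shrink when $n_{k+1}$ is enlarged, so nothing absorbs the factor $(n_{k+1}-n_k)^{p-1}$. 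This is not a repairable technicality within your scheme when $p>1$: summability of the $p$-th moments of increments does not control pathwise sums $\sum_j|S_{t_{j+1}}-S_{t_j}|$ over long blocks. Indeed, take $p=2$, $t_n=n$, and $Y_t=2\,\1_{A_k}$ for $t\in(2^{k-1},2^k]$ with $(A_k)$ independent and $\P[A_k]=1/k$; then $\sup_t\E[|Y_t|^2]<\infty$, $\frac1t\int_0^tY_s\,ds\to0$ in $L^2$, and $\sum_n|1-\tfrac{n}{n+1}|^2<\infty$, yet $\frac{1}{2^k}\int_0^{2^k}Y_s\,ds\ge\1_{A_k}$ and $A_k$ occurs infinitely often a.s., so no choice of $(n_k)$ (and in fact no argument using only the stated hypotheses) can complete your plan for $p>1$. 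For $p=1$ your block construction does close, but it is then superfluous: $\sum_n\E|S_{t_{n+1}}-S_{t_n}|<\infty$ already gives a.s. absolute convergence of the telescoping sum, hence a.s. convergence of $S_{t_n}$, identified with $a$ through the $L^1$ limit, which is precisely the paper's one-line conclusion. (The same observation shows that the paper's own step from summable $p$-th moments to a.s. convergence of the series is only justified as written for $p=1$.)
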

\begin{proof}
    Define $Z_t = \frac{1}{t}\int_0^t Y_s ds - a$. Then it suffices to show that $Z_t \longrightarrow 0$ a.s. For $s,t > 0$ we obtain 
    \[
     Z_{t} - Z_s = \frac{1}{t}\int_{s}^{t}Y_r dr + \left( \frac{s}{t} - 1\right) \frac{1}{s}\int_0^{s}Y_r dr.
    \]
    Let $A = \sup_{r \geq 0}\E\left[|Y_r|^p\right] < \infty$.
    The first term can be bounded according to
    \begin{align*}
        \E\left[ \left|  \frac{1}{t}\int_{s}^{t}Y_r dr \right|^p \right]
        &\leq t^{-p} (t-s)^{p-1}\int_{s}^{t}\E\left[|Y_r|^p\right] dr 
        \\ &\leq t^{-p}(t-s)^p A.
    \end{align*}
    For the second term, we use the Jensen inequality to find that 
    \begin{align*}
     \left| \frac{s}{t} - 1\right|^p\E\left[ \left|\frac{1}{s}\int_0^{s}Y_r dr \right|^p \right]
     &\leq \left| \frac{s}{t} - 1\right|^p\frac{1}{s} \int_0^{s}\E\left[ |Y_r|^p \right] dr
     \\ &\leq t^{-p}(t-s)^pA.
    \end{align*}
    Hence we arrive at $\E\left[ |Z_t - Z_s|^p \right]         \leq 2^p t^{-p}|t-s|^pA$. Let $t_n \nearrow \infty$ be given as in the assumptions. Then
    \begin{align*}
     \sum_{n=1}^{\infty}\E\left[ |Z_{t_{n+1}} - Z_{t_n}|^p \right]
     \leq 2^p A \sum_{n=1}^{\infty} t_{n+1}^{-p}|t_{n+1} - t_n|^p
     = 2^p A \sum_{n=1}^{\infty}\left| 1 - \frac{t_n}{t_{n+1}}\right|^p < \infty.
    \end{align*}
    Hence $Z := \sum_{n=1}^{\infty}(Z_{t_{n+1}} - Z_{t_n})$ is $\P$-a.s. convergent. Finally, since $Z_{t_{n+1}} = Z_{t_1} + \sum_{j=1}^{n}(Z_{t_{j+1}} - Z_{t_j})$ and the right-hand side is $\P$-a.s. convergent, also $Z_{t_{n+1}}$ is $\P$-a.s. convergent. Since $Z_{t_{n+1}} \to 0$ in $L^p$, uniqueness of a.s. limits yield the assertion.      
\end{proof}

To conclude this section, let us remark that the established Law of Large Numbers allows us to prove the consistency for the \textit{generalised method of moments}, provided that the stochastic Volterra process of interest satisfies the conditions of Theorem \ref{lemma: Birkhoff ergodic theorem} and allows to express the moments of the process semi-explicitly. In the next section, we verify the conditions of Theorem \ref{lemma: Birkhoff ergodic theorem} for the Volterra Ornstein-Uhlenbeck process and demonstrate that the corresponding method-of-moments is consistent.

\section{Ergodicity for the Volterra Ornstein-Uhlenbeck process}\label{sec:4_ergodicity_vou}

\subsection{Ergodic regime}

In this section, we study the ergodic properties of the one-dimensional Volterra Ornstein-Uhlenbeck process given by \eqref{eq: VOU} under the following set of conditions. 
\begin{assumption}\label{VOU}
     Let $b, x_0 \in \R$, $\beta < 0$, and $\sigma > 0$. Let $0 \leq K \in L_{loc}^2(\R_+)$ be completely monotone, and assume that there exists $\gamma \in (0,1]$ with the property
    \begin{align}\label{eq: K global regularity}
        \sup_{0 \leq t-s \leq 1}(t-s)^{-2\gamma}\left(\int_s^t |K(r)|^2 dr + \int_0^{\infty}|K((t-s) + r) - K(r)|^2 dr\right) < \infty.
    \end{align}    
\end{assumption}
Let $X$ be the VOU process given by \eqref{eq: VOU}. Then, under assumption \eqref{eq: K global regularity}, an application of \cite{MR4019885} shows that $X$ has for each $\e > 0$ a modification with $\gamma - \e$ H\"older continuous sample paths. Similarly to the non-Volterra case, the VOU process admits an explicit solution formula as described below. Let $E_{\beta} \in L_{loc}^2(\R_+)$ be the unique solution of the linear Volterra equation
\begin{align}\label{eq: Ebeta definition}
 E_{\beta}(t) = K(t) + \int_0^t K(t-s)\beta E_{\beta}(s)ds.
\end{align}
whose existence is guaranteed by \cite[Theorem 3.5]{MR1050319}. It is easy to see that, by Young's inequality, this function satisfies $E_{\beta} \in C((0,\infty))$. Note that $R_{\beta}(t) := E_{\beta}(t)(-\beta)$ is the resolvent of the second kind of $K (-\beta)$, i.e. is the unique solution of $R_{\beta} = K(-\beta) + K(-\beta) \ast R_{\beta}$, see \cite[Chapter 2]{MR1050319}. An application of the variation of constants formula for Volterra equations (see e.g. \cite[Lemma 2.5]{MR4019885}), and invoking relation $R_{\beta} = E_{\beta}(-\beta)$ shows that $X$ given by \eqref{eq: VOU} also solves the equivalent equation  
\begin{align}\label{eq: VOU Ebeta formulation}
 X_t = \left( 1 + \int_0^t E_{\beta}(s)\beta\ ds \right)x_0 + \left(\int_0^t E_{\beta}(s)ds\right)b + \int_0^t E_{\beta}(t-s)\sigma dB_s.
\end{align}
In particular, $X$ is a Gaussian process with mean and covariance structure given by
\begin{align*}
    \E[X_t] &= \left( 1 + \int_0^t E_{\beta}(s)\beta\ ds \right)x_0 + \left(\int_0^t E_{\beta}(s)ds\right)b,
    \\ \mathrm{cov}(X_t,X_s) &= \int_0^{s \wedge t} E_{\beta}(|t-s|+r)\sigma \sigma^{\top}E_{\beta}(r)^{\top} \ dr.
\end{align*} 

Let us write $\|K\|_{L^1(\R_+)} = \int_0^{\infty}K(t)dt \in [0,\infty]$. Below, we summarise some useful results for the Gaussian process $X$. 

\begin{Lemma}\label{prop: Ebeta uniform bounds}
    Suppose that assumption \ref{VOU} is satisfied. Then the following assertions hold:
    \begin{enumerate}
        \item[(a)] $E_{\beta} \in L^1(\R_+) \cap L^2(\R_+)$ is completely monotone and
        \begin{align}\label{eq: Ebeta integral}
            \int_0^{\infty}E_{\beta}(t)dt = \frac{1}{\|K\|_{L^1(\R_+)}^{-1} + |\beta|}
        \end{align}
        with the convention that $1/\infty := 0$. Moreover, for each $N \geq 1$ it holds that
        \[
        \lim_{t \to \infty}\sup_{\beta \leq -1/N}\left( \int_t^{\infty}E_{\beta}(r)dr + \int_t^{\infty}E_{\beta}(r)^2dr\right) = 0.
        \]
        \item[(b)] Let $\Theta \subset \R \times (-\infty,0)$ be a compact. Then the limits 
        \begin{align}
            m_1(b,\beta) &:= \lim_{t \to \infty}\E[X_t] = \frac{x_0}{1+\|K\|_{L^1(\R_+)}|\beta|} + \frac{b}{|\beta|}, \label{eq: first moment VOU}
            \\ m_2(b, \beta) &:= \lim_{t \to \infty} \E[X_t^2] = (m_1(b,\beta))^2 + \sigma^2 \int_0^{\infty}E_{\beta}(r)^2 dr \notag
        \end{align}
        exist locally uniformly in $(b,\beta) \in \Theta$.    
    \end{enumerate}
\end{Lemma}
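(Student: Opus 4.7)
\emph{For part (a),} I would first establish complete monotonicity of $E_\beta$. The function $R_\beta(t) := -\beta E_\beta(t)$ is the resolvent of the second kind of the completely monotone kernel $-\beta K$ (with $-\beta > 0$), so by the classical theory \cite[Chapter 5]{MR1050319} both $R_\beta$ and hence $E_\beta$ are completely monotone. In particular $E_\beta \geq 0$ is non-increasing, and feeding this positivity back into \eqref{eq: Ebeta definition} yields the pointwise envelope $0 \leq E_\beta(t) \leq K(t)$. To compute $\|E_\beta\|_{L^1}$, I would Laplace-transform \eqref{eq: Ebeta definition}: writing $\hat f(\lambda) = \int_0^\infty e^{-\lambda t} f(t)\,dt$, this gives $\hat E_\beta(\lambda) = \hat K(\lambda)/(1+|\beta|\hat K(\lambda))$ for $\lambda > 0$, and monotone convergence as $\lambda \searrow 0$ produces
\[
    \int_0^\infty E_\beta(t)\,dt = \frac{\|K\|_{L^1}}{1+|\beta|\|K\|_{L^1}} = \frac{1}{\|K\|_{L^1}^{-1}+|\beta|}
\]
under the convention $1/\infty = 0$, proving \eqref{eq: Ebeta integral} and $E_\beta \in L^1(\R_+)$. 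Membership in $L^2$ I would obtain by splitting $\int_0^\infty E_\beta^2 = \int_0^\delta E_\beta^2 + \int_\delta^\infty E_\beta^2$: on $(0,\delta]$ use $E_\beta \leq K \in L^2_{loc}$, while on $[\delta,\infty)$ use monotonicity to bound $E_\beta(t)^2 \leq E_\beta(\delta) E_\beta(t)$ which is integrable by the previous step.

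\emph{The uniform tail estimate} is the delicate part of (a), since $\{\beta \leq -1/N\}$ is noncompact and when $K \notin L^1$ the envelope $E_\beta \leq K$ gives no uniform tail control. I would split the parameter range: given $\varepsilon > 0$, pick $M$ so that $1/M < \varepsilon$; then for $|\beta| \geq M$ the $L^1$-formula forces $\int_t^\infty E_\beta(r)\,dr \leq \|E_\beta\|_{L^1} \leq 1/|\beta| \leq 1/M < \varepsilon$ for every $t$. On the compact subrange $|\beta| \in [1/N, M]$, continuity of $\beta \mapsto E_\beta(t)$ for each fixed $t > 0$ (a standard perturbation estimate for linear Volterra equations) combined with monotone decay of $t \mapsto \int_t^\infty E_\beta(r)\,dr$ yields uniform convergence via a Dini-type argument. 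The tail of $\int_t^\infty E_\beta(r)^2\,dr$ is handled analogously using the additional bound $E_\beta(r)^2 \leq E_\beta(t) E_\beta(r)$ for $r \geq t$.

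\emph{For part (b),} the explicit Gaussian representation \eqref{eq: VOU Ebeta formulation} gives
\[
    \E[X_t] = x_0 + (\beta x_0 + b)\int_0^t E_\beta(s)\,ds, \qquad \mathrm{Var}(X_t) = \sigma^2 \int_0^t E_\beta(s)^2\,ds.
\]
Passing to the limit $t \to \infty$ via part (a) gives convergence of $\E[X_t]$ to $m_1(b,\beta)$ and of $\mathrm{Var}(X_t)$ to $\sigma^2 \int_0^\infty E_\beta(s)^2\,ds$, whence $\E[X_t^2] = \mathrm{Var}(X_t) + \E[X_t]^2 \to m_2(b,\beta)$. On a compact $\Theta \subset \R \times (-\infty,0)$ there exists $N \geq 1$ with $\beta \leq -1/N$ for all $(b,\beta)\in\Theta$; the uniform tail bounds of part (a), together with continuity of $(b,\beta)\mapsto m_i(b,\beta)$, upgrade these pointwise limits to locally uniform convergence on $\Theta$. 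The main technical obstacle throughout is the uniform tail estimate of (a), which must simultaneously handle the degeneration $|\beta| \to \infty$ (where $\|E_\beta\|_{L^1}$ is small but the tail may be fat relatively) and continuity on the complementary compact subrange.
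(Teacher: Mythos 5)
Your proposal is correct, and it matches the paper's proof in most respects: complete monotonicity via $R_\beta=-\beta E_\beta$ being the resolvent of the second kind of $-\beta K$, the Laplace-transform computation of $\|E_\beta\|_{L^1}$ with the limit $\lambda\searrow 0$, membership in $L^2$ from monotonicity (the paper is terser here; your envelope $E_\beta\le K$ near zero is a clean way to handle the singular part), and part (b) read off from the explicit mean and variance plus part (a). Where you genuinely diverge is the uniform tail estimate, which is indeed the only delicate step. The paper does not split the parameter range at all: it invokes a comparison principle (\cite[Lemma A.6]{BBF23}) asserting that $\beta\mapsto E_\beta(t)$ is monotone, so that $E_\beta\le E_{\beta_0}$ pointwise for all $\beta\le\beta_0:=-1/N$; this yields the single quantitative bound $\int_t^\infty E_\beta(r)\,dr+\int_t^\infty E_\beta(r)^2\,dr\le\bigl(1+E_{\beta_0}(t)\bigr)\int_t^\infty E_{\beta_0}(r)\,dr$, which tends to zero uniformly over the whole unbounded half-line $\beta\le -1/N$ in one stroke. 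Your route replaces this by the two-regime split ($\|E_\beta\|_{L^1}\le 1/|\beta|$ for $|\beta|\ge M$, and a Dini argument on the compact subrange), which works and avoids the external comparison lemma, but it rests on the continuity of $\beta\mapsto\int_t^\infty E_\beta(r)\,dr$, which you only assert; to make it airtight you should note, e.g., that $\int_t^\infty E_\beta=\|E_\beta\|_{L^1}-\int_0^t E_\beta$, where the first term is explicitly continuous by \eqref{eq: Ebeta integral} and the second by a Gronwall-type perturbation bound for the linear Volterra equation on $[0,t]$ (and similarly, or via $\int_t^\infty E_\beta^2\le E_\beta(1)\int_t^\infty E_\beta$ with $E_\beta(1)\le K(1)$, for the quadratic tail). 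In short: the paper buys uniformity over the noncompact range from monotonicity in $\beta$; you buy it from the explicit $1/|\beta|$ decay of the $L^1$-norm plus compactness, at the cost of an extra continuity argument.
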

\begin{proof}
    Since $K \neq 0$ is completely monotone, it follows that second kind $R_{\beta} \in L^1(\R_+)$ is completely monotone, see \cite[Chapter 5, Theorem 3.1]{MR1050319}. Thus, since $(-\beta)^{-1} > 0$, also $E_{\beta} = (-\beta)^{-1}R_{\beta}$ is completely monotone and integrable. Since $E_{\beta}$ is nonincreasing, it follows that $E_{\beta} \in L^2(\R_+)$. To compute the integral of $E_{\beta}$, we take Laplace transforms of the equation $E_{\beta} = K + \beta K \ast E_{\beta}$, which gives $\widehat{E}_{\beta} = \widehat{K} + \beta \widehat{K} \widehat{E}_{\beta}$ and hence
    \[
        \widehat{E}_{\beta}(z) = \frac{\widehat{K}(z)}{1 - \beta \widehat{K}(z)} = \frac{1}{\widehat{K}(z)^{-1} - \beta}, \qquad z > 0.
    \]
    Since $\widehat{K}(z) > 0$ by complete monotonicity, we may take the limit $z \searrow 0$ to find 
    \[
        \int_0^{\infty} E_{\beta}(t) dt = \frac{1}{\left(\int_0^{\infty}K(t)dt\right)^{-1} + |\beta|}
    \]
    which proves \eqref{eq: Ebeta integral}. Since $E_{\beta} \geq 0$ for $\beta < 0$, an application of \cite[Lemma A.6]{BBF23} implies that $E_{\beta} \leq E_{\widetilde{\beta}}$ for $\widetilde{\beta} < \beta < 0$. Thus we obtain
    \begin{align*}
        \int_t^{\infty}E_{\beta}(r)dr + \int_t^{\infty}E_{\beta}(r)^2dr
        \leq \left(1 + E_{\beta_0}(t)\right) \int_t^{\infty}E_{\beta_0}(r)dr
    \end{align*}
    for $\beta_0 = -N^{-1} < 0$ where we have used that $E_{\beta_0}$ is nonincreasing. This proves assertion (a). Assertion (b) follows directly from the explicit form of $\E[X_t]$, formula $\E[X_t^2] = \mathrm{cov}(X_t,X_t) - (\E[X_t])^2$, and an application of part (a).
\end{proof}

Next, we introduce the corresponding stationary process under assumption \ref{VOU}. Let $(B'_t)_{t \geq 0}$ be another standard Brownian motion independent of $(B_t)_{t \geq 0}$ and define $W_t = \1_{[0,\infty)}(t)B_t + \1_{(-\infty, 0)}(t)B_t'$. Then 
\begin{align}\label{eq: VOU stationary process}
 X^{\mathrm{stat}}_t = m_1(b,\beta) + \sigma\int_{-\infty}^t E_{\beta}(t-s)dW_s, \qquad t \in \R
\end{align}
determines a strictly stationary Gaussian process with mean $m_1(b,\beta)$ and covariance structure
\[
 \mathrm{cov}(X^{\mathrm{stat}}_t, X_s^{\mathrm{stat}}) = \sigma^2 \int_0^{\infty}E_{\beta}(|t-s|+r)E_{\beta}(r)dr.
\]
Remark that, since the VOU process is a Gaussian process, its distributional properties are fully determined by its characteristic function and hence in terms of its mean and covariance structure. Thus, the stationary process is necessarily unique in the sense that its mean $m_1(b,\beta)$ and covariance structure $\mathrm{cov}(X_t^{\mathrm{stat}}, X_s^{\mathrm{stat}})$ is uniquely determined by the limits derived in Lemma 4.2.(b). However, remark that if $K \in L^1(\R_+)$, then the mean $m_1(b,\beta)$ and hence also the stationary process depends on the initial state $x_0$ which reflects the non-markovian nature of the dynamics. Below, we prove another auxiliary result on uniform bounds and convergence to equilibrium.

\begin{Lemma}\label{prop: VOU technical stuff}
 Suppose assumption \ref{VOU} holds. Then for each $p \in [2,\infty)$ there exists a locally bounded function $C_{p}(b,\beta) > 0$ in $(b,\beta) \in \R \times (-\infty, 0)$ such that
 \begin{align}\label{eq: bounded moments VOU}
  \sup_{t \geq 0}\E\left[ |X_t|^p + |X_t^{\mathrm{stat}}|^p \right] \leq C_p(b,\beta) < \infty
 \end{align}
 and for all $0 \leq t-s \leq 1$
 \begin{align}\label{eq: increment}
  \E[|X^{\mathrm{stat}}_t - X^{\mathrm{stat}}_s|^p] +  \E[|X_t - X_s|^p] \leq C_p(b,\beta)(t-s)^{p\gamma}.
 \end{align}
 In particular, for each $\eta \in (0, \gamma)$, the processes $X, X^{\mathrm{stat}}$ have a modification with $\eta$-H\"older continuous sample paths. Finally, it holds that 
 \begin{align}\label{eq: weak convergence stationary}
            (X_{t+h})_{t \geq 0} \Longrightarrow (X_t^{\mathrm{stat}})_{t \geq 0}, \qquad h \to \infty
 \end{align}
 weakly on the path space of continuous functions.    
\end{Lemma}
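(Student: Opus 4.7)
My plan is to exploit that $X$ and $X^{\mathrm{stat}}$ are Gaussian processes, which reduces all $L^p$-bounds to control of the mean and variance, and then to derive the required estimates on $E_\beta$ from Lemma~\ref{prop: Ebeta uniform bounds} and the regularity assumption \eqref{eq: K global regularity}.

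For \eqref{eq: bounded moments VOU}, the explicit formulas give $|\E[X_t]| \leq |x_0|(1 + |\beta|\|E_\beta\|_{L^1(\R_+)}) + |b|\|E_\beta\|_{L^1(\R_+)}$ and $\mathrm{Var}(X_t) = \sigma^2 \int_0^t E_\beta(u)^2 du \leq \sigma^2 \|E_\beta\|_{L^2(\R_+)}^2$, both controlled by Lemma~\ref{prop: Ebeta uniform bounds}(a) locally uniformly in $(b,\beta)$; the analogous quantities for $X^{\mathrm{stat}}$ are constant in $t$. The Gaussian bound $\E|\xi|^p \leq C_p(|\E\xi|^p + \mathrm{Var}(\xi)^{p/2})$ then yields \eqref{eq: bounded moments VOU}.

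For \eqref{eq: increment} with $h = t-s \leq 1$, the mean increment satisfies $|\E[X_t-X_s]| \leq |\beta x_0 + b|\int_s^t E_\beta \leq C(b,\beta,x_0) h^{1/2+\gamma}$ by Cauchy--Schwarz and the key $L^2$-estimate below, while the variance of the increment splits into the two pieces $\sigma^2 \int_0^h E_\beta^2$ and $\sigma^2 \int_0^s (E_\beta(h+v) - E_\beta(v))^2\,dv$ (with the second piece replaced by $\int_0^\infty$ for $X^{\mathrm{stat}}$). The first piece, $\int_0^h E_\beta^2 \leq C(\beta) h^{2\gamma}$, follows from $E_\beta = K + \beta K \ast E_\beta$ by Minkowski plus Young's inequality (needing only $E_\beta \in L^1$) and \eqref{eq: K global regularity}. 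The hard part is the second piece. Setting $g_h(r) := E_\beta(h+r) - E_\beta(r)$ and $F_h(r) := \int_0^h K(h+r-u) E_\beta(u)\,du$, a direct calculation shows $g_h$ satisfies the Volterra equation $g_h - \beta K \ast g_h = [K(h+\cdot) - K] + \beta F_h$. I would then perform an energy estimate by multiplying by $g_h$ and integrating over $\R_+$; thanks to the Bernstein representation $K(t) = \int_0^\infty e^{-xt}\nu(dx)$ combined with Plancherel, the convolution form $\int_0^\infty g_h (K \ast g_h)\,dv$ is nonnegative, so (since $\beta < 0$) I obtain $\|g_h\|_{L^2}^2 \leq \int_0^\infty g_h [K(h+\cdot) - K]\,dv + |\beta|\,\bigl|\int_0^\infty g_h F_h\,dv\bigr|$. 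The first term is bounded by $Ch^\gamma\|g_h\|_{L^2}$ via Cauchy--Schwarz and \eqref{eq: K global regularity}; for the second, Fubini and changes of variable in the inner integral $\int_0^\infty g_h(v) K(h+v-s)\,dv$ reduce everything to quantities of the form $\int (K(\cdot + h) - K)^2$ and $\int_{h-s}^{2h-s} K^2$, both bounded by $Ch^{2\gamma}$, giving $|\int g_h F_h| \leq (\int_0^h E_\beta)\cdot Ch^\gamma \leq Ch^{1/2 + 2\gamma}$. Solving the resulting quadratic inequality $\|g_h\|_{L^2}^2 \leq Ch^\gamma\|g_h\|_{L^2} + C(\beta)h^{2\gamma}$ then yields $\|g_h\|_{L^2} \leq C(\beta) h^\gamma$. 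This is the main obstacle of the argument.

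Combining these estimates gives $\E|X_t-X_s|^p + \E|X^{\mathrm{stat}}_t - X^{\mathrm{stat}}_s|^p \leq C_p(b,\beta,\sigma,x_0)(t-s)^{p\gamma}$, and Kolmogorov--Chentsov (with $p\gamma > 1$) delivers the $\eta$-H\"older modification for each $\eta \in (0,\gamma)$. Finally, \eqref{eq: weak convergence stationary} follows from Gaussian convergence theory: finite-dimensional convergence reduces to convergence of means and covariances, and $\E[X_{t+h}] \to m_1(b,\beta)$ by Lemma~\ref{prop: Ebeta uniform bounds}(b), while $\mathrm{cov}(X_{t+h}, X_{s+h}) = \sigma^2 \int_0^{\min(s,t)+h} E_\beta(|t-s|+r) E_\beta(r)\,dr \to \mathrm{cov}(X_t^{\mathrm{stat}}, X_s^{\mathrm{stat}})$ by monotone convergence. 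Tightness on each $C([0,T]; \R)$ follows from the shift-invariant form of \eqref{eq: increment} applied to $(X_{t+h})_{t \in [0,T]}$, since the constant in \eqref{eq: increment} does not depend on $h$.
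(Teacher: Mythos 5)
Your proposal is correct, and its overall architecture (Gaussianity reduces everything to mean/variance control, an $L^2$-estimate on $E_\beta$ and its shifts, Kolmogorov--Chentsov for the H\"older modification, and finite-dimensional convergence plus Kolmogorov tightness for \eqref{eq: weak convergence stationary}) coincides with the paper's proof in Appendix A. The genuine difference is in the key estimate $\int_0^h E_\beta^2\,dr + \int_0^\infty (E_\beta(h+r)-E_\beta(r))^2\,dr \lesssim h^{2\gamma}$: the paper obtains both bounds by citing \cite[Lemma A.2]{FJ22} together with $\int_0^\infty R_\beta \le 1$ and \eqref{eq: K global regularity}, whereas you prove them from scratch — the first by Minkowski/Young applied to $E_\beta = K + \beta K\ast E_\beta$, the second by deriving the Volterra equation $g_h - \beta K\ast g_h = [K(h+\cdot)-K] + \beta F_h$ for $g_h = E_\beta(h+\cdot)-E_\beta$ and running an energy estimate in which the term $-\beta\int_0^\infty g_h\,(K\ast g_h)\,dr$ is discarded because completely monotone kernels are of positive type. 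I checked your reduction of $\int_0^\infty g_h F_h$ via the change of variables: it does land on $\|K(h+\cdot)-K\|_{L^2(\R_+)}$ and $\int_{h-s}^{2h-s}K^2$, both of order $h^{\gamma}$ resp.\ $h^{2\gamma}$ by \eqref{eq: K global regularity}, and the quadratic inequality indeed yields $\|g_h\|_{L^2}\lesssim h^\gamma$; so the argument closes. Your route is self-contained and makes transparent where complete monotonicity enters (positive definiteness of the kernel), at the cost of two routine justifications you should spell out: the absolute convergence of $\int_0^\infty g_h\,(K\ast g_h)\,dr$ (note $K\notin L^2(\R_+)$ in general, but $K\ast g_h\in L^2$ follows from the equation itself, so Cauchy--Schwarz suffices, and nonnegativity passes from $[0,T]$ to $T\to\infty$), and the local boundedness in $(b,\beta)$ of all constants, which follows from the monotonicity $E_\beta\le E_{\beta_0}$ for $\beta\le\beta_0<0$ as used in Lemma \ref{prop: Ebeta uniform bounds}. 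The paper's citation-based route is shorter; yours removes the dependence on the external lemma.
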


The proof of this lemma is postponed to the appendix. At this point let us remark that the complete monotonicity allows us to effectively verify that $E_{\beta} \in L^1(\R_+) \cap L^2(\R_+)$ whenever $\beta < 0$. The latter is crucial to carry out the asymptotic analysis, construct the corresponding stationary process, and establish the Law-of-Large numbers.

\subsection{Law of Large Numbers and mixing}

In this section, we prove the Law of Large Numbers and show that the stationary process is mixing. As a first step, we establish the asymptotic independence for $X$ and $X^{\mathrm{stat}}$.   

\begin{Lemma}\label{lemma: VOU asymptotic independence}
    Suppose that assumption \ref{VOU} is satisfied. Then the following assertions hold:
    \begin{enumerate}
        \item[(a)] $\P_{b,\beta}$ and $\P_{b,\beta}^{\mathrm{stat}}$ are asymptotically independent in the sense that, for each $n \in \N$, 
        \[
         (X_{t_1},\dots, X_{t_n}), \ \ (X_{t_1}^{\mathrm{stat}}, \dots, X_{t_n}^{\mathrm{stat}}) \Longrightarrow \pi_{b,\beta}^{\otimes n} 
        \]
        weakly as $t_1, \dots, t_n \to \infty$ with $\min_{k=1, \dots, n-1}|t_{k+1}-t_k| \to \infty$ locally uniformly in $(b,\beta) \in \R \times (-\infty, 0)$. Here $\pi_{b,\beta}$ denotes the Gaussian distribution with mean $m_1(b,\beta)$ and variance 
        \[
            m_{\mathrm{var}}(\beta) :=  \sigma^2 \int_0^{\infty}E_{\beta}(r)^2 dr = m_2(b,\beta) - (m_1(b,\beta))^2.
        \]

        \item[(b)] For all $n, N \in \N$, $0 \leq t_1 < \dots < t_n$ and $0 \leq s_1 < \dots < s_N$, one has
        \begin{align}\label{eq: VOU mixing weak convergence}
            (X^{\mathrm{stat}}_{t_1}, \dots, X^{\mathrm{stat}}_{t_n}, X^{\mathrm{stat}}_{s_1 +h}, \dots, X^{\mathrm{stat}}_{s_N + h}) \Longrightarrow (X^{\mathrm{stat}}_{t_1}, \dots, X^{\mathrm{stat}}_{t_n}, \overline{X}_{s_1}, \dots, \overline{X}_{s_N})
        \end{align}
        weakly as $h \to \infty$, where $\overline{X}$ is an independent copy of $X^{\mathrm{stat}}$.
    \end{enumerate}
\end{Lemma}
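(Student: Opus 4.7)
The plan is to exploit the Gaussian nature of both $X$ and $X^{\mathrm{stat}}$ established after \eqref{eq: VOU Ebeta formulation}. Weak convergence of Gaussian random vectors is equivalent to convergence of their means and covariance matrices, and since the characteristic function depends continuously on these parameters, locally uniform weak convergence (in the sense of the appendix) reduces to locally uniform convergence of means and covariances.

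For part (a), the convergence of the mean $\E_{b,\beta}[X_{t_k}] \to m_1(b,\beta)$ and of the variance $\mathrm{Var}_{b,\beta}(X_{t_k}) \to m_{\mathrm{var}}(\beta)$, locally uniformly in $(b,\beta)$, is already supplied by Lemma \ref{prop: Ebeta uniform bounds}.(b). The remaining ingredient is to show that the off-diagonal covariances vanish. Using the explicit formula $\mathrm{cov}(X_t, X_s) = \sigma^2 \int_0^{s \wedge t} E_{\beta}(|t-s|+r) E_{\beta}(r)\,dr$ and Cauchy--Schwarz,
\[
    |\mathrm{cov}(X_t,X_s)| \leq \sigma^2 \left( \int_{|t-s|}^{\infty} E_{\beta}(u)^2\, du \right)^{1/2} \left( \int_0^{\infty} E_{\beta}(r)^2\, dr \right)^{1/2},
\]
and on a compact $\Theta \subset \R \times (-\infty,0)$ we have $\beta \leq -1/N$ for some $N$, so by Lemma \ref{prop: Ebeta uniform bounds}.(a) the first factor tends to zero uniformly as $|t-s| \to \infty$. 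Applied to the $\binom{n}{2}$ pairs $(t_k,t_j)$, this shows the covariance matrix converges to $m_{\mathrm{var}}(\beta) I_n$, establishing the claim for $X$. The identical argument for the stationary process uses $\mathrm{cov}(X_t^{\mathrm{stat}}, X_s^{\mathrm{stat}}) = \sigma^2 \int_0^{\infty} E_{\beta}(|t-s|+r) E_{\beta}(r)\,dr$, which admits the same upper bound.

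For part (b), the joint vector on the left is Gaussian, so again it suffices to check mean and covariance convergence. The means are constant in $h$ by strict stationarity and equal those on the right. For covariances, stationarity yields $\mathrm{cov}(X^{\mathrm{stat}}_{s_i+h}, X^{\mathrm{stat}}_{s_j+h}) = \mathrm{cov}(X^{\mathrm{stat}}_{s_i}, X^{\mathrm{stat}}_{s_j}) = \mathrm{cov}(\overline{X}_{s_i}, \overline{X}_{s_j})$ for every $h$, while the cross-block covariances $\mathrm{cov}(X^{\mathrm{stat}}_{t_k}, X^{\mathrm{stat}}_{s_j+h})$ involve a time gap $s_j + h - t_k \to \infty$ and therefore vanish by the same Cauchy--Schwarz estimate as in part (a). Since $\overline{X}$ is an independent copy of $X^{\mathrm{stat}}$, the limiting covariance matrix is block diagonal with the prescribed blocks, matching the right-hand side.

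The main obstacle is the locally uniform control required in (a): one has to ensure that the covariance estimates are uniform in $(b,\beta)$ over compact subsets of $\R \times (-\infty,0)$, which hinges on the monotonicity $E_{\beta} \leq E_{\widetilde\beta}$ for $\widetilde\beta < \beta < 0$ underlying Lemma \ref{prop: Ebeta uniform bounds}.(a). Once this uniformity is in hand, uniform convergence of means and covariances transfers to uniform convergence of Gaussian characteristic functions, and hence to the desired locally uniform weak convergence as characterised in the appendix.
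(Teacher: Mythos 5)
Your proposal is correct and follows essentially the same route as the paper: reduce weak convergence of the Gaussian vectors to convergence of means and covariances, kill the off-diagonal (and cross-block) covariances via the Cauchy--Schwarz bound involving $\int_{|t-s|}^{\infty}E_{\beta}(r)^2dr$, obtain uniformity from Lemma \ref{prop: Ebeta uniform bounds}, and conclude via the uniform L\'evy continuity theorem from the appendix. No substantive difference from the paper's argument.
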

\begin{proof}
 The mean of $(X_{t_1},\dots, X_{t_n})$ satisfies 
 \[
  (\E[X_{t_1}], \dots, \E[X_{t_n}])^{\top} \longrightarrow (m_1(b,\beta), \dots, m_1(b,\beta)), \qquad t_1,\dots, t_n \to \infty
 \]
 locally uniformly in $(b,\beta)$ due to Lemma \ref{prop: Ebeta uniform bounds}. If $j \neq k$, then its covariance matrix satisfies by the Cauchy-Schwarz inequality and dominated convergence
 \begin{align*}
  |\mathrm{cov}(X_{t_j}, X_{t_k})| \leq \sigma^2 \int_0^{\infty}E_{\beta}(r)^2dr \int_{|t_j - t_k|}^{\infty}E_{\beta}(r)^2 dr \longrightarrow 0
 \end{align*}
 as $|t_j - t_k| \to \infty$. For $j = k$, we obtain
 \[
  \mathrm{cov}(X_{t_j}, X_{t_j}) = \sigma^2 \int_0^{t_j} E_{\beta}(r)^2dr \longrightarrow \sigma^2 \int_0^{\infty}E_{\beta}(r)^2 dr.
 \]
 Since the last two integrals converge locally uniformly in $\beta$ due to Lemma \ref{prop: Ebeta uniform bounds}, the desired weak convergence locally uniformly in $(b,\beta)$ follows from the uniform version of L\'evys continuity theorem, see Theorem \ref{thm: uniform Levy continuity theorem}. The case of $X^{\mathrm{stat}}$ can be shown in the same way.

 (b) As both sides are Gaussian random variables, it suffices to prove the convergence of the mean and covariance. Convergence of the mean is evident, for the covariance we first note that for $h$ sufficiently large one has
 \begin{align*}
  \mathrm{cov}(X_{t_j}^{\mathrm{stat}}, X_{s_k+h}^{\mathrm{stat}})
  = \sigma^2 \int_0^{\infty} E_{\beta}((s_k+h -t_j)+r)E_{\beta}(r)dr \longrightarrow 0
 \end{align*}
 as $h \to \infty$ by Cauchy-Schwarz and dominated convergence theorem. Hence the covariance matrix of $(X^{\mathrm{stat}}_{t_1}, \dots, X^{\mathrm{stat}}_{t_n}, X^{\mathrm{stat}}_{s_1 +h}, \dots, X^{\mathrm{stat}}_{s_N + h})$ satisfies
 \begin{align*}
  \lim_{h \to \infty}&\ \begin{pmatrix} \left(\mathrm{cov}(X_{t_j}^{\mathrm{stat}}, X_{t_k}^{\mathrm{stat}} \right)_{1 \leq j,k \leq n} & \left( \mathrm{cov}(X_{t_j}^{\mathrm{stat}}, X_{s_k+h}^{\mathrm{stat}} \right)_{1 \leq j \leq n, \ 1 \leq k \leq N} 
  \\ \left( \mathrm{cov}(X_{t_j}^{\mathrm{stat}}, X_{s_k+h}^{\mathrm{stat}} \right)_{1 \leq j \leq n, \ 1 \leq k \leq N}  & \left( \mathrm{cov}(X_{s_j+h}^{\mathrm{stat}}, X_{s_k+h}^{\mathrm{stat}} \right)_{1 \leq j,k \leq N} \end{pmatrix}
  \\ &= \begin{pmatrix} \left(\mathrm{cov}(X_{t_j}^{\mathrm{stat}}, X_{t_k}^{\mathrm{stat}} \right)_{1 \leq j,k \leq n} & 0 
  \\ 0  & \left( \mathrm{cov}(X_{s_j}^{\mathrm{stat}}, X_{s_k}^{\mathrm{stat}} \right)_{1 \leq j,k \leq N} \end{pmatrix}
 \end{align*}
 Since the right-hand side is the covariance matrix of $(X^{\mathrm{stat}}_{t_1}, \dots, X^{\mathrm{stat}}_{t_n}, \overline{X}_{s_1}, \dots, \overline{X}_{s_N})$, property \eqref{eq: VOU mixing weak convergence} is proved.
\end{proof}

The above result allows us to apply a version of Birkhoff's ergodic theorem as stated in Theorem \ref{lemma: Birkhoff ergodic theorem}, and to show that the stationary process is mixing. In particular, the stationary process is ergodic in the sense that its shift-invariant $\sigma$-algebra is trivial.

\begin{Theorem}\label{thm: law of large numbers VOU}
    Suppose that assumption \ref{VOU} holds. Then $\P_{b,\beta}^{\mathrm{stat}}$ is mixing. Let $\pi_{b,\beta}$ be the Gaussian distribution on $\R$ with mean $m_1(b,\beta)$ and variance $m_{\mathrm{var}}(\beta)$, and let $f: \R \longrightarrow \R$ be polynomially bounded and continuous a.e. except for a set of Lebesgue measure zero. Then, for each $p \in [2,\infty)$,
    \begin{align}\label{eq: Birkhoff VOU}
     &\ \lim_{t \to \infty}\E\left[ \left|\frac{1}{t}\int_0^t f(X_s)ds - \int_{\R} f(x) \pi_{b,\beta}(dx)\right|^p \right] = 0,
     \\ &\ \lim_{t \to \infty}\E\left[ \left| \frac{1}{t}\int_0^t f(X_s^{\mathrm{stat}})ds - \int_{\R}f(x)\pi_{b,\beta}(dx)\right|^p \right] = 0. \notag
    \end{align}
    holds locally uniformly in $(b,\beta) \in \R \times (-\infty, 0)$.
\end{Theorem}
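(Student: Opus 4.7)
My plan is to deduce mixing from the finite-dimensional asymptotic independence of Lemma \ref{lemma: VOU asymptotic independence}(b), and then to obtain the $L^p$ law of large numbers by applying the abstract Birkhoff-type Theorem \ref{lemma: Birkhoff ergodic theorem} to $Y_t = f(X_t)$ (respectively $f(X_t^{\mathrm{stat}})$), verifying its hypotheses via Lemma \ref{lemma: VOU asymptotic independence}(a) combined with the uniform moment bounds from Lemma \ref{prop: VOU technical stuff}.

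For the mixing statement, the measure $\P_{b,\beta}^{\mathrm{stat}}$ on the canonical path space is shift-invariant, and mixing amounts to $\P^{\mathrm{stat}}(A \cap \Theta_h^{-1}B) \to \P^{\mathrm{stat}}(A)\P^{\mathrm{stat}}(B)$ for all Borel sets $A,B$, where $\Theta_h$ denotes the time shift by $h$. By a standard Dynkin $\pi$--$\lambda$ argument, it suffices to check this for $A$ and $B$ in a generating $\pi$-system of cylinder sets of the form $A = \{X^{\mathrm{stat}}_{t_j} \in A_j, \ j=1,\dots, n\}$ and $B = \{X^{\mathrm{stat}}_{s_k} \in B_k, \ k=1,\dots, N\}$ where each $A_j, B_k$ is an open rectangle whose boundary has zero mass under the (continuous, Gaussian) finite-dimensional marginal. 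Lemma \ref{lemma: VOU asymptotic independence}(b) together with the Portmanteau theorem then yields the required factorisation, since the limit random vector has the product structure $(X^{\mathrm{stat}}_{t_1},\dots, X^{\mathrm{stat}}_{t_n}, \overline{X}_{s_1},\dots, \overline{X}_{s_N})$ with $\overline{X}$ an independent copy of $X^{\mathrm{stat}}$.

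For the Law of Large Numbers, fix a compact $\Theta \subset \R \times (-\infty, 0)$ and apply Theorem \ref{lemma: Birkhoff ergodic theorem} with $\mathbb{T} = [0,\infty)$ and $\tau(ds) = ds$. Writing $|f(x)| \leq C(1+|x|^q)$ for some $q \geq 0$, Lemma \ref{prop: VOU technical stuff} provides a locally bounded function $C_{q(p+\e)}(b,\beta)$ whose supremum over $\Theta$ is finite, so the $(p+\e)$-moment hypothesis \eqref{eq: moment bound} is satisfied. Lemma \ref{lemma: VOU asymptotic independence}(a) gives $X_t \Longrightarrow \pi_{b,\beta}$ and $(X_s, X_u) \Longrightarrow \pi_{b,\beta} \otimes \pi_{b,\beta}$ (in the regime $s,u \to \infty$ with $|s-u| \to \infty$) locally uniformly in $(b,\beta)$. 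Since $\pi_{b,\beta}$ is Gaussian, the discontinuity set of $f$ is $\pi_{b,\beta}$-null, so the continuous mapping theorem applies to both $f$ and $(x,y) \mapsto f(x)f(y)$; combined with uniform integrability from the $(p+\e)$-moment bound, the uniform Vitali-type statement collected in Proposition \ref{prop: P characterization} yields
\[
    \E[f(X_t)] \longrightarrow m(b,\beta), \qquad \E[f(X_s)f(X_u)] \longrightarrow m(b,\beta)^2
\]
uniformly on $\Theta$, where $m(b,\beta) := \int_\R f \, d\pi_{b,\beta}$. Hence $\mathrm{cov}(f(X_s), f(X_u)) \to 0$ uniformly, and Cesàro averaging transfers both pointwise uniform limits into the hypotheses \eqref{eq: Birkhoff first moment} and \eqref{eq: asymptotic covariance}. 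Theorem \ref{lemma: Birkhoff ergodic theorem} then delivers the first display in \eqref{eq: Birkhoff VOU}; the argument for $X^{\mathrm{stat}}$ is identical, using its stationarity and the same asymptotic independence. The main technical obstacle will be keeping every weak-convergence and moment-convergence statement uniform over $(b,\beta)\in\Theta$, which forces reliance on the uniform Portmanteau/Vitali machinery in the appendix rather than its pointwise classical analogues; the continuity-a.e. hypothesis on $f$ intervenes only through the absolute continuity of the Gaussian limits.
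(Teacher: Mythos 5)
Your proposal is correct and follows essentially the same route as the paper: mixing via a Dynkin argument on rectangle cylinder sets using Lemma \ref{lemma: VOU asymptotic independence}(b), the Portmanteau theorem and absolute continuity of the Gaussian finite-dimensional marginals, and the $L^p$ law of large numbers by feeding Lemma \ref{lemma: VOU asymptotic independence}(a) together with the uniform moment bounds into Theorem \ref{lemma: Birkhoff ergodic theorem} through the uniform continuous-mapping machinery of the appendix. The only detail you leave implicit---the dominating-measure condition \eqref{eq: locally absolute continuity} needed to apply the uniform continuous mapping theorem (Theorem \ref{thm: uniform CMT}(c)) with an only a.e.\ continuous $f$---is verified in the paper by checking that $\int_{\R}\sup_{(b,\beta)\in\Theta}\pi_{b,\beta}(x)\,dx$ and the analogous two-dimensional integral are finite on each compact $\Theta$.
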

\begin{proof}
 Let us first show that $\P_{b,\beta}^{\mathrm{stat}}$ is mixing, i.e., 
 \[
  \lim_{h \to \infty}\P_{b,\beta}^{\mathrm{stat}}[ A \cap T_h^{-1}(B)] = \P_{b,\beta}^{\mathrm{stat}}[A]\P_{b,\beta}^{\mathrm{stat}}[B]
 \]
 holds for all Borel sets $A,B$ on the path space $C(\R_+)$ where $T_h(x) = x(h + \cdot)$ denotes the shift operator on $C(\R_+)$. By standard Dynkin arguments, it suffices to prove this convergence for a $\cap$-stable system of sets that generate the Borel-$\sigma$-algebra on $C(\R_+)$. Let us consider, for $n \in \N$, $t_1,\dots, t_n \geq 0$, and $a_j < b_j$, $j = 1,\dots,n$, the set 
 \[
  A = \{ x \in C(\R_+) \ : \ (x_{t_1}, \dots, x_{t_n}) \in (a_1,b_1] \times \dots (a_n, b_n] \},
 \]
 and similarly, for $N \in \N$, $s_1, \dots, s_N \geq 0$ and $a_j' < b_j'$, the set
 \[
  B = \{ x \in C(\R_+) \ : \ (x_{s_1}, \dots, x_{s_N}) \in (a'_1,b'_1] \times \dots (a'_n, b'_n] \}.
 \]
 For this choice of sets, the assertion becomes 
 \begin{align*}
  &\ \lim_{h \to \infty}\P_{b,\beta}^{\mathrm{stat}}\left[(x_{t_1},\dots, x_{t_n}; x_{s_1+h}, \dots, x_{s_N+h}) \in \prod_{j=1}^n (a_j,b_j] \times \prod_{l=1}^N (a_l',b_l'] \right]
  \\ &= \P_{b,\beta}^{\mathrm{stat}}\left[(x_{t_1},\dots, x_{t_n}) \in \prod_{j=1}^n (a_j,b_j] \right]\P_{b,\beta}^{\mathrm{stat}}\left[(x_{s_1}, \dots, x_{s_N}) \in \prod_{l=1}^N (a_l',b_l'] \right].
 \end{align*}
 Because of \eqref{eq: VOU mixing weak convergence} combined with the Portmanteau theorem, the above convergence is satisfied provided that  
 \[
  \P_{b,\beta}^{\mathrm{stat}}\left[(x_{t_1},\dots, x_{t_n}) \in \partial \prod_{j=1}^n (a_j,b_j] \right] = \P_{b,\beta}^{\mathrm{stat}}\left[(x_{s_1}, \dots, x_{s_N}) \in \partial \prod_{l=1}^N (a_l',b_l'] \right] = 0.
 \]
 The latter is satisfied since the finite-dimensional distributions of $\P_{b,\beta}^{\mathrm{stat}}$ are absolutely continuous with respect to the Lebesgue measure, see \cite{F24}. This proves that $\P_{b,\beta}^{\mathrm{stat}}$ is mixing. 

 Next, we show that the conditions of Theorem \ref{lemma: Birkhoff ergodic theorem} are satisfied for $Y_t = f(X_t)$. By Lemma \ref{lemma: VOU asymptotic independence} we obtain $X_t \Longrightarrow \pi_{b,\beta}$ and $(X_s,X_t) \Longrightarrow \pi_{b,\beta} \otimes \pi_{b,\beta}$ locally uniformly in $(b,\beta)$. Since $m_{\mathrm{var}}(\beta) > 0$ due to $\sigma > 0$ and $K \neq 0$ so that $E_{\beta} \neq 0$, the Gaussian measures $\pi_{b,\beta}$ and $\pi_{b,\beta}\otimes \pi_{b,\beta}$ are absolutely continuous with respect to the Lebesgue measure. By abuse of notation, let $\pi_{b,\beta}(x)$ and $\pi_{b,\beta}(x,y)$ be the corresponding densities. Using the explicit form of these, we find 
 \[
    \int_{\R}\sup_{(b,\beta) \in \Theta}\pi_{b,\beta}(x)dx + \int_{\R^2}\sup_{(b,\beta) \in \Theta}\pi_{b,\beta}(x,y)dxdy < \infty
 \]
 for each compact $\Theta \subset \R \times (-\infty, 0)$. Hence we obtain $Y_t \Longrightarrow \pi_{b,\beta} \circ f^{-1} =: \pi_{b,\beta}^f$ and $(Y_t, Y_s) \Longrightarrow \pi_{b,\beta}^f \otimes \pi_{b,\beta}^f$ uniformly on $\Theta$ by the uniform continuous mapping theorem (see Theorem \ref{thm: uniform CMT}.(c)). Thus, Theorem \ref{lemma: Birkhoff ergodic theorem} is applicable for $Y_t = f(X_t)$ which proves the assertion for the first case. The case $Y_t = f(X^{\mathrm{stat}}_t)$ can be shown in the same way. 
\end{proof}

\begin{Remark}
    A similar statement to \eqref{eq: Birkhoff VOU} also holds in the discrete-time case under the same assumptions.
\end{Remark}

\subsection{Method of moments for fractional kernel}

Let us apply the previously shown law of large numbers to prove consistency for the estimators obtained from the method of moments. More precisely, define 
\begin{align}\label{eq: method of moments}
 m_1(T) = \frac{1}{T}\int_{[0,T)} X_t \tau(dt) \ \text{ and } \ m_2(T) = \frac{1}{T}\int_{[0,T)} X_t^2 \tau(dt)
\end{align}
where $\tau(dt) = dt$ in continuous time, and $\tau(dt)$ is a purely discrete measure for discrete-time observations. The method of moments estimators for $(b,\beta)$ are then obtained as the solutions of $m_1(T) = m_1(\widehat{b}_T, \widehat{\beta}_T)$ and $m_2(T) = m_2(\widehat{b}_T, \widehat{\beta}_T)$. 

Below we focus on the particular case of the fractional kernel for which we can explicitly compute the integral $\int_0^{\infty}E_{\beta}(t)^2dt$. 

\begin{Example}\label{example: fractional}
    Suppose that $K_{\alpha}$ is the fractional Riemann-Lioville kernel of order $\alpha \in (1/2, 1)$ and $\beta < 0$. Then 
    \[
     \int_0^{\infty} E_{\beta}(t)^2 dt = C_{\alpha}|\beta|^{\frac{1}{\alpha} - 2}
    \]
    where the constant $C_{\alpha}$ is given by
    \[
     C_{\alpha} = \frac{1}{\pi}\int_0^{\infty} \frac{du}{1 + 2u^{\alpha}\cos(\frac{\pi \alpha}{2}) + u^{2\alpha}}.
    \]
\end{Example}
\begin{proof}
 The Fourier transform of $E_{\beta}$ is given by
 \[
  \widehat{E}(\mathrm{i}z) = \frac{1}{\widehat{K}_{\alpha}(\mathrm{i}z)^{-1} + |\beta|} = ( e^{\mathrm{i}\pi \alpha/2}|z|^{\alpha} + |\beta|)^{-1}, \qquad z \in \R.
 \]
 The Plancherel identity combined with the substitution $u = |\beta|^{1/\alpha}z$ gives
 \begin{align*}
     \int_0^{\infty} E_{\beta}(t)^2 dt 
    &= \frac{1}{2\pi} \int_{\R} | \widehat{E}_{\beta}(\mathrm{i}z)|^2 dz
    \\ &= \frac{1}{2\pi} \int_{\R} \frac{dz}{||\beta| + |z|^{\alpha}e^{\mathrm{i}\mathrm{sgn}(z)\frac{\pi\alpha}{2}}|^2}
    \\ &= \frac{1}{\pi} \int_{0}^{\infty} \frac{dz}{|\beta|^2 + 2 |\beta|z^{\alpha}\cos(\frac{\pi \alpha}{2}) + z^{2\alpha}}
    \\ &= \frac{|\beta|^{\frac{1}{\alpha}-2}}{\pi} \int_0^{\infty} \frac{du}{1 + 2u^{\alpha} \cos(\frac{\pi \alpha}{2}) + u^{2\alpha}}.
 \end{align*} 
\end{proof}

Thus, for the particular case of the fractional Riemann-Liouville kernel \eqref{eq: fractional kernel} in \eqref{eq: VOU} combined with \eqref{eq: first moment VOU} and Example \ref{example: fractional}, we obtain
\begin{align}\label{eq: moments fractional}
    m_1(b,\beta) = \frac{b}{|\beta|} \ \ \text{ and } \ \ m_2(b,\beta) = \left(\frac{b}{|\beta|}\right)^2 + \sigma^2 C_{\alpha} |\beta|^{\frac{1}{\alpha} - 2}.
\end{align}
Solving these equations gives
\begin{align*}
    \widehat{b}_T &= m_1(T) \left( \frac{C_{\alpha}\sigma^2}{m_2(T) - m_1(T)^2} \right)^{\frac{\alpha}{2\alpha-1}}
    \\ \widehat{\beta}_T &= -\left( \frac{C_{\alpha}\sigma^2}{m_2(T) - m_1(T)^2} \right)^{\frac{\alpha}{2\alpha-1}}.
\end{align*}

The following result applies equally to continuous and discrete observations.

\begin{Corollary}\label{cor: VOU method of moments}
    Suppose that $K$ is given by the fractional kernel \eqref{eq: fractional kernel}, $\sigma > 0$ and $\beta < 0$. Then $(\widehat{b}_T, \widehat{\beta}_T)$ is consistent in probability locally uniform in $(b,\beta)$, i.e.
    \[
     \lim_{T \to \infty}\sup_{(b,\beta) \in \Theta}\P\left[ \left| (\widehat{b}_T, \widehat{\beta}_T) - (b,\beta)\right| > \e\right] = 0
    \]
    holds for each $\e > 0$ and any compact $\Theta \subset \R \times (-\infty, 0)$. Furthermore, if $(T_n)_{n \geq 1}$ is a sequence with $T_n \nearrow \infty$ and there exists $p \in [1,\infty)$ with 
    \begin{align}\label{eq: Tn summability}
     \sum_{n=1}^{\infty}\left| 1 - \frac{T_n}{T_{n+1}}\right|^p < \infty,
    \end{align}
    then $(\widehat{b}_{T_n}, \widehat{\beta}_{T_n})$ is strongly consistent.
\end{Corollary}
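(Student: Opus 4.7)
The plan is to deduce everything from Theorem \ref{thm: law of large numbers VOU}, the explicit moment formulas \eqref{eq: moments fractional}, and a continuous mapping argument. Writing $\Phi(u,v) = \bigl( u \cdot \psi(u,v),\ -\psi(u,v)\bigr)$ with $\psi(u,v) = \bigl(C_\alpha \sigma^2/(v-u^2)\bigr)^{\alpha/(2\alpha-1)}$, one has $\widehat{b}_T = \Phi_1(m_1(T),m_2(T))$, $\widehat{\beta}_T = \Phi_2(m_1(T),m_2(T))$, and, by \eqref{eq: moments fractional}, $\Phi(m_1(b,\beta),m_2(b,\beta)) = (b,\beta)$. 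The map $\Phi$ is continuous on the open set $\{(u,v): v-u^2 > 0\}$, which contains the image of any compact $\Theta \subset \R \times (-\infty,0)$ under $(b,\beta) \mapsto (m_1(b,\beta), m_2(b,\beta))$ (since $m_2(b,\beta) - m_1(b,\beta)^2 = \sigma^2 C_\alpha|\beta|^{1/\alpha - 2} > 0$).

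First I would apply Theorem \ref{thm: law of large numbers VOU} with $f(x)=x$ and $f(x)=x^2$ (both polynomially bounded and continuous), together with its discrete-time analogue mentioned in the Remark following it. This yields, for every $p\in[2,\infty)$,
\[
\lim_{T\to\infty}\sup_{(b,\beta)\in\Theta}\E\left[|m_1(T) - m_1(b,\beta)|^p + |m_2(T) - m_2(b,\beta)|^p\right] = 0,
\]
in particular convergence in probability uniformly over $\Theta$.

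Next, since $\{(m_1(b,\beta), m_2(b,\beta)) : (b,\beta) \in \Theta\}$ is compact and contained in the open domain of $\Phi$, there exists a compact neighbourhood on which $\Phi$ is uniformly continuous. Combining this uniform continuity with the uniform convergence in probability of $(m_1(T), m_2(T))$ and controlling the small-probability event on which the pair leaves this neighbourhood (using Markov's inequality and the uniform $L^p$-bound above) gives
\[
\lim_{T\to\infty}\sup_{(b,\beta)\in\Theta}\P\bigl[|(\widehat{b}_T, \widehat{\beta}_T)-(b,\beta)|>\e\bigr]=0,
\]
which is the first assertion.

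For the strong consistency along $(T_n)$, I would apply Lemma \ref{lemma: strong consistency} (and its obvious discrete-time counterpart) separately to $Y_t = X_t$ and $Y_t = X_t^2$. The required moment bound $\sup_{t \geq 0}\E[|X_t|^{2p}] < \infty$ is supplied by \eqref{eq: bounded moments VOU} in Lemma \ref{prop: VOU technical stuff}, the $L^p$-convergence is the one established in the first step at fixed $(b,\beta)$, and the summability \eqref{eq: Tn summability} is the hypothesis. Hence $m_1(T_n) \to m_1(b,\beta)$ and $m_2(T_n) \to m_2(b,\beta)$ almost surely, and applying $\Phi$ pathwise (permissible since the limit lies in the open domain of $\Phi$, so the pair stays in it eventually) yields $(\widehat{b}_{T_n}, \widehat{\beta}_{T_n}) \to (b,\beta)$ a.s. The main technical point, and hence the only real obstacle, is upgrading convergence in probability to \emph{uniform} convergence in probability in step two; this is routine because the map $\Phi$ is uniformly continuous on a compact set that is bounded away from the singular locus $\{v=u^2\}$ uniformly in $(b,\beta) \in \Theta$, thanks to the strictly positive lower bound on $m_2(b,\beta)-m_1(b,\beta)^2$ over the compact set $\Theta$.
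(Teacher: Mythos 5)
Your proposal is correct and follows essentially the same route as the paper's proof: the uniform Law of Large Numbers of Theorem \ref{thm: law of large numbers VOU} applied to $f(x)=x$ and $f(x)=x^2$, the continuity of the moment-inversion map (the paper simply cites the uniform continuous mapping theorem, Theorem \ref{thm: uniform CMT}.(a), where you verify the required uniform continuity near the limit points by hand), and Lemma \ref{lemma: strong consistency} for the strong consistency along $(T_n)$. The only difference is that you spell out explicitly the details the paper compresses into two sentences.
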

\begin{proof}
Consistency locally uniformly in $(b,\beta)$ follows from the uniform Law of Large Numbers given in Theorem \ref{thm: law of large numbers VOU} combined with the uniform continuous mapping theorem Proposition \ref{thm: uniform CMT}.(a) and the continuity of $m_1,m_2$ in \eqref{eq: moments fractional}. Furthermore, an application of Lemma \ref{lemma: strong consistency} implies the desired strong consistency of the estimators. 
\end{proof}

This corollary applies, e.g., for $T_n = n^{\beta}$ with $\beta > 0$ such that $p > 1/\beta$. However, $T_n = q^n$ with $q > 1$ does not satisfy \eqref{eq: Tn summability}.

\section{Maximum likelihood estimation}\label{sec:5_MLE}
\subsection{Continuous observations}

In this section, we study the MLE for the VOU process. To keep this work at a reasonable length, we focus on the univariate case $d = 1$. However, at the expense of more delicate arguments, see \cite{MR2471289} for the classical Ornstein-Uhlenbeck process, it should be possible to study also the multivariate case. Remark that, in dimension $d = 1$, the matrices $A_T(X), F_T(X)$ given in Theorem \ref{lemma: VOU Girsanov} take the form 
\begin{align*}
    A_T(X) &= \frac{\sigma^{-2}}{T} \int_0^T X_t dZ_t - \frac{\sigma^{-2}}{T}\int_0^T X_t dt \frac{Z_T}{T},
    \\ F_{T}(X) &= \frac{1}{T}\int_0^T X_t^2 dt - \left(\frac{1}{T}\int_0^T X_t dt \right)^2,
\end{align*}  
where $Z = Z(X)$ is given by \eqref{eq: Z process}, and hence $dZ_t(X) = (b + \beta X_t)dt + \sigma dB_t$. In particular, $F_T(X) \geq 0$ a.s. by the Cauchy-Schwarz inequality. Since $X$ is not constant due to $\sigma, K \neq 0$, we even have $F_T(X) > 0$ a.s. The corresponding MLE are in such a case given by
\begin{align*}
 \widehat{b}_T(X) &= \frac{ \frac{Z_T}{T} \frac{1}{T}\int_0^T X_s^2 ds - \left( \frac{1}{T}\int_0^T X_s ds \right) \left( \frac{1}{T}\int_0^T X_s dZ_s \right)}{\frac{1}{T}\int_0^T X_s^2ds - \left( \frac{1}{T}\int_0^T X_s ds \right)^2},
 \\ \widehat{\beta}_T(X) &= \frac{ \frac{1}{T} \int_0^T X_s dZ_s - \frac{Z_T}{T} \frac{1}{T}\int_0^T X_s ds }{\frac{1}{T}\int_0^T X_s^2ds - \left( \frac{1}{T}\int_0^T X_s ds \right)^2}.
\end{align*} 

The following theorem proves the consistency and asymptotic normality for these estimators.

\begin{Theorem}\label{thm: continuous MLE for VOU}
    Under assumption \ref{VOU} the maximum-likelihood estimator is strongly consistent. Moreover, for each compact $\Theta \subset \R \times (-\infty, 0)$, it holds that
    \begin{align}\label{eq: consistency}
     \lim_{T \to \infty}\sup_{(b,\beta) \in \Theta}\P_{b,\beta}\left[ \left| (\widehat{b}_T, \widehat{\beta}_T) - (b,\beta)\right| > \e\right] = 0, \qquad \forall \e > 0,
    \end{align}
    and the estimator is asymptotically normal in the sense that
    \begin{align}\label{eq: asymptotic normality}
     \mathcal{L}_{b,\beta}\left(\sqrt{T}(\widehat{b}_T - b, \widehat{\beta}_T - \beta) \right) \Longrightarrow \sigma I(b,\beta)^{-1/2}\mathcal{N}(0,\mathrm{id}_{2 \times 2}), \qquad T \longrightarrow \infty
    \end{align}
    holds uniformly in $(b,\beta) \in \Theta$ with Fisher information matrix 
    \[
     I(b,\beta) = \begin{pmatrix}
        1 & m_1(b,\beta)\\ m_1(b,\beta) & m_2(b,\beta)
    \end{pmatrix}.
    \]
\end{Theorem}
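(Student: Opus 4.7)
The plan is to reduce everything to an explicit linear expression involving empirical moments and a two-dimensional continuous martingale, then combine the uniform Law of Large Numbers from Theorem~\ref{thm: law of large numbers VOU} with a uniform martingale central limit theorem.

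First I would substitute $dZ_t = (b + \beta X_t)dt + \sigma dB_t$ into the numerators defining $\widehat b_T, \widehat\beta_T$ and cancel the deterministic terms. Writing $M_1 = \tfrac{1}{T}\int_0^T X_s ds$, $M_2 = \tfrac{1}{T}\int_0^T X_s^2 ds$, so that $F_T(X)=M_2-M_1^2$, a direct computation yields
\[
    \begin{pmatrix} \widehat b_T - b \\ \widehat\beta_T - \beta \end{pmatrix}
    = \sigma \begin{pmatrix} 1 & M_1 \\ M_1 & M_2 \end{pmatrix}^{-1}
      \begin{pmatrix} B_T/T \\ \tfrac{1}{T}\int_0^T X_s dB_s \end{pmatrix},
\]
since $\tfrac{1}{F_T}\begin{pmatrix} M_2 & -M_1 \\ -M_1 & 1\end{pmatrix}$ is the inverse of the empirical information matrix. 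This identity is the backbone of both statements.

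For consistency, Theorem~\ref{thm: law of large numbers VOU} applied to $f(x)=x$ and $f(x)=x^2$ gives $M_1 \to m_1(b,\beta)$ and $M_2 \to m_2(b,\beta)$ in every $L^p$, locally uniformly on $\Theta$, and $F_T \to m_{\mathrm{var}}(\beta)>0$. The two martingale summands $B_T/T$ and $\tfrac{1}{T}\int_0^T X_s dB_s$ converge to zero: the first by the law of the iterated logarithm, the second because its quadratic variation $\int_0^T X_s^2 ds$ is of order $T$ with uniformly bounded $L^p$ norm thanks to \eqref{eq: bounded moments VOU}, so a standard SLLN-for-martingales argument (together with a Borel-Cantelli argument along the subsequences constructed as in Lemma~\ref{lemma: strong consistency}) gives both a.s. convergence to zero and the uniform-in-$\Theta$ convergence in probability needed for \eqref{eq: consistency}. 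Combining these with the invertibility of the limiting Fisher matrix $I(b,\beta)$ yields strong consistency and \eqref{eq: consistency} via the uniform continuous mapping theorem (Theorem~\ref{thm: uniform CMT}).

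For asymptotic normality I would apply a martingale CLT to the two-dimensional continuous martingale $N_T = (B_T, \int_0^T X_s dB_s)^{\top}$, whose quadratic covariation matrix is $\begin{pmatrix} T & \int_0^T X_s ds \\ \int_0^T X_s ds & \int_0^T X_s^2 ds \end{pmatrix}$. Dividing by $T$, this converges in probability to $I(b,\beta)$ by Theorem~\ref{thm: law of large numbers VOU}, so the martingale CLT gives $T^{-1/2} N_T \Longrightarrow \mathcal{N}(0, I(b,\beta))$. Multiplying by the matrix $I(b,\beta)^{-1}$ (which is the limit of the inverse empirical information matrix) yields $\sqrt T (\widehat b_T - b, \widehat\beta_T - \beta)^{\top} \Longrightarrow \sigma\, \mathcal{N}(0, I(b,\beta)^{-1})$, which is \eqref{eq: asymptotic normality} since $I^{-1} = I^{-1/2}(I^{-1/2})^{\top}$.

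The main obstacle is upgrading this CLT to hold \emph{uniformly} over the compact parameter set $\Theta$. I expect to handle this via the uniform Lévy continuity theorem (Theorem~\ref{thm: uniform Levy continuity theorem}): the characteristic function of $T^{-1/2} N_T$ is $\mathbb{E}_{b,\beta}[\exp(\tfrac{i}{2}\langle u, T^{-1}\langle N\rangle_T u\rangle + \text{correction})]$-type expression, and using that $T^{-1}\langle N\rangle_T \to I(b,\beta)$ in $L^p$ locally uniformly in $(b,\beta)$ together with a uniform-in-parameter exponential martingale argument gives uniform convergence of characteristic functions to $\exp(-\tfrac12 u^{\top} I(b,\beta) u)$. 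A Slutsky-type application combining this with the uniform convergence of the Fisher matrix inverse, as ensured by Proposition~\ref{prop: P characterization} and Theorem~\ref{thm: uniform CMT}, then gives the asserted uniform weak convergence.
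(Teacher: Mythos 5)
Your proposal is correct and follows essentially the same route as the paper: the same representation $(\widehat b_T-b,\widehat\beta_T-\beta)^{\top}=\sigma\langle M\rangle_T^{-1}M_T$ with $M_T=(B_T,\int_0^T X_s dB_s)^{\top}$, the uniform Law of Large Numbers of Theorem \ref{thm: law of large numbers VOU} for $\langle M\rangle_T/T\to I(b,\beta)$, the strong law for martingales for strong consistency, the uniform continuous mapping theorem for \eqref{eq: consistency}, and a martingale CLT for \eqref{eq: asymptotic normality}. The only cosmetic difference is that the paper obtains the uniform normality by directly invoking the martingale central limit theorem of \cite{MR2144185} together with the uniform convergence of the bracket, rather than your sketched characteristic-function/uniform L\'evy continuity argument.
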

\begin{proof}
    Let $X$ be the VOU process with parameters $(b,\beta)$ obtained from \eqref{eq: VOU} and let $D_T = T\int_0^T X_s^2ds - \left(\int_0^TX_sds\right)^2$. Since $X$ is not constant due to $\sigma, K \neq 0$, one has $\P[D_T > 0] = 1$. Using $dZ_t(X) = (b+\beta X_t)dt + \sigma dB_t$ with respect to $\P$, we find after a short computation 
    \begin{align*}
     \begin{pmatrix}\widehat{b}_T(X) \\  \widehat{\beta}_T(X) \end{pmatrix} 
     &= \begin{pmatrix} b \\ \beta \end{pmatrix}
     + \frac{1}{D_T}\begin{pmatrix} \sigma B_T \int_0^T X_s^2ds - \sigma \int_0^T X_s ds \int_0^T X_s dB_s
     \\ \sigma T \int_0^T X_s dB_s - \sigma B_T \int_0^T X_s ds
     \end{pmatrix}
     \\ &= \begin{pmatrix} b \\ \beta \end{pmatrix} + \frac{\sigma}{D_T}\begin{pmatrix} \int_0^T X_s^2 ds & -\int_0^T X_s ds \\ -\int_0^T X_s ds & T \end{pmatrix} \begin{pmatrix} B_T \\ \int_0^T X_s dB_s \end{pmatrix}
     \\ &= \begin{pmatrix} b \\ \beta \end{pmatrix} + \sigma \langle M\rangle_T^{-1} M_T
    \end{align*}
    where $(M_T)_{T \geq 0}$ is a square-integrable martingale given by $M_T = (B_T, \int_0^T X_s dB_s)^{\top}$,
    \[
    \langle M \rangle_T = \begin{pmatrix} T & \int_0^T X_s ds \\ \int_0^T X_s ds & \int_0^T X_s^2 ds \end{pmatrix} \ \text{ and } \ \langle M \rangle_T^{-1} = \frac{1}{D_T}\begin{pmatrix} \int_0^T X_s^2 ds & -\int_0^T X_s ds \\ -\int_0^T X_s ds & T \end{pmatrix}.
    \]
    Since by Theorem \ref{thm: law of large numbers VOU}, $\frac{1}{T}\int_0^TX_sds$ and $\frac{1}{T}\int_0^T X_s^2 ds$ are both convergent in $L^2$ with limits $m_1(b,\beta)$ and $m_2(b,\beta)$ uniformly in $(b,\beta) \in \Theta$, we conclude that
    \begin{align}\label{eq: M bracket VOU convergence}
     \frac{\langle M \rangle_T}{T} \longrightarrow \begin{pmatrix} 1 & m_1(b,\beta) \\ m_1(b,\beta) & m_2(b,\beta)\end{pmatrix} = I(b,\beta)
    \end{align}
    converges in $L^2$ uniformly in $(b,\beta) \in \Theta$. Note that $\mathrm{det}(I(b,\beta)) = m_{\mathrm{var}}(\beta) > 0$. In particular, $\langle M \rangle_T \longrightarrow +\infty$ a.s. and hence, by the strong law of large numbers for martingales, we conclude that $\langle M \rangle_T^{-1}M_T \longrightarrow 0$ a.s., which proves strong consistency. 
    
    Let us show that the estimators are uniformly convergent. By direct computation we obtain
    \begin{align*}
        \left(\frac{\langle M_T\rangle_T}{T}\right)^{-1} &= \frac{1}{\frac{1}{T}\int_0^T X_s^2 ds - \left( \frac{1}{T}\int_0^T X_s ds \right)^2} \begin{pmatrix}
            \frac{1}{T}\int_0^T X_s^2 ds & - \frac{1}{T}\int_0^T X_s ds \\ -\frac{1}{T}\int_0^T X_s ds & 1 
        \end{pmatrix}
    \end{align*}
    which converges due to Theorem \ref{thm: law of large numbers VOU} locally uniformly towards
    \[
     \frac{1}{m_{\mathrm{var}}(\beta)} \begin{pmatrix}
            m_2(b,\beta) & - m_1(b,\beta) \\ -m_1(b,\beta) & 1 
        \end{pmatrix} = I(b,\beta)^{-1}
    \]
    Similarly, we see that $\frac{M_T}{T} = (\frac{B_T}{T},  \frac{1}{T}\int_0^T X_s dB_s)^{\top} \longrightarrow 0$ in $L^2(\Omega)$ locally uniformly in $(b,\beta)$. Theorem \ref{thm: uniform continuous mapping theorem}.(a) shows that 
    \begin{align*}
    \sup_{(b,\beta) \in \Theta}\P[ |(\widehat{b}_T(X), \widehat{\beta}_T(X)) - (b,\beta)| > \e]
        &= \sup_{(b,\beta) \in \Theta}\P\left[ \left| \left(\frac{\langle M_T\rangle_T}{T}\right)^{-1} \frac{M_T}{T}\right| > \sigma^{-1}\e\right] \longrightarrow 0.
    \end{align*}
    The asymptotic normality uniformly in $(b,\beta) \in \Theta$ follows from the central limit theorem \cite[Proposition 1.21]{MR2144185} applied to
    \[
     \left( \frac{\langle M \rangle_T}{T}\right)^{-1}\frac{M_T}{\sqrt{T}} \Longrightarrow I(b,\beta)^{-1} \mathcal{N}(0, I(b,\beta)) = I(b,\beta)^{-1/2}\mathcal{N}(0,\mathrm{id}_{2\times 2}).
    \]
\end{proof}

As a particular case, below we summarise the case where either $b$ or $\beta$ is estimated, while the other parameter is known.

\begin{Remark}
    The following assertions hold under the assumption \ref{VOU}:
    \begin{enumerate}
     \item[(a)] If $\beta < 0$ is known, then the MLE for $b$ given by
    \[
     \widehat{b}_T(X) = \frac{Z_T(X)}{T} - \frac{\beta}{T}\int_0^T X_t dt
    \]
    is strongly consistent. Moreover, it is consistent in probability and asymptotically normal locally with variance $\sigma^2$ uniformly in the parameters.
    
    \item[(b)] If $b$ is known, then the MLE for $\beta$ given by
    \[
        \widehat{\beta}_T(X) = \frac{\int_0^T X_t dZ_t(X) - b\int_0^T X_t dt}{\int_0^T X_t^2 dt}
    \]
    is strongly consistent.  Moreover, it is consistent in probability and asymptotically normal with variance $\sigma^2/m_2(b,\beta)$ locally uniformly in the parameters.
    \end{enumerate}
\end{Remark}

\subsection{Discrete high-frequency observations}

Next, we consider discrete high-frequency observations. For each $n \geq 1$ let $0 = t_0^{(n)} < \dots < t_n^{(n)}$ be a partition of length $t_n^{(n)}$. To simplify the notation, we let 
\begin{align}\label{eq: partition}
    \mathcal{P}_n = \{ [t_k^{(n)}, t_{k+1}^{(n)}] \ : \ k = 0,\dots, n-1 \}
\end{align}
be the collection of neighbouring intervals determined by this partition, and let
\[
    |\mathcal{P}_n| = \max_{[u,v] \in \mathcal{P}_n}(v-u)
\]
be the mesh size of the partition. To simplify the notation, we denote by $Z = Z(X)$ the process \eqref{eq: Z process} with $X$ given by \eqref{eq: VOU}. Let us define the discretization of $X$ with respect to $\mathcal{P}_n$ by
\begin{align}\label{eq: X discretization}
    X^{\mathcal{P}_n}_s = \sum_{[u,v] \in \mathcal{P}_n}\1_{[u,v)}(s)X_{u}.
\end{align}

Discretization of the continuous time maximum-likelihood estimator $(\widehat{b}_{t_n^{(n)}}, \widehat{\beta}_{t_n^{(n)}})$ gives
\begin{footnotesize}
\begin{align*}
    \widehat{b}^{(n,m)}(X) &= \frac{Z^{\mathcal{P}_n}_{t_n^{(n)}}\left(\sum_{[u,v] \in \mathcal{P}_n}X_{u}^2(v-u)\right) - \left( \sum_{[u,v] \in \mathcal{P}_n}X_{u}(v-u)\right)\left(\sum_{[u,v] \in \mathcal{P}_n}X_{u}(Z^{\mathcal{P}_m}_{v} - Z^{\mathcal{P}_m}_{u}) \right)}
    {t_n^{(n)}\left(\sum_{[u,v] \in \mathcal{P}_n}X_{u}^2(v-u)\right) - \left( \sum_{[u,v] \in \mathcal{P}_n}X_{u}(v-u)\right)^2}
    \\ \widehat{\beta}^{(n,m)}(X) &= \frac{t_n^{(n)}\left(\sum_{[u,v] \in \mathcal{P}_n}X_{u}(Z^{\mathcal{P}_m}_{v} - Z^{\mathcal{P}_m}_{u}) \right) - Z^{\mathcal{P}_n}_{t_n^{(n)}} \left(\sum_{[u,v] \in \mathcal{P}_n}X_{u} (v-u) \right) }
    {t_n^{(n)}\left(\sum_{[u,v] \in \mathcal{P}_n}X_{u}^2(v-u)\right) - \left( \sum_{[u,v] \in \mathcal{P}_n}X_{u}(v-u)\right)^2}.
\end{align*} \end{footnotesize}
For the auxiliary process $Z = Z(X)$ we use a finer discretization with respect to $\mathcal{P}_m$ with $m \geq n$ given by 
\begin{align}\label{eq: Z discretization}
     Z^{\mathcal{P}_m}_{u}(X) = \sum_{\bfrac{[u',v'] \in \mathcal{P}_m}{v' \leq u}}(X_{u - u'} - x_0)L((u', v']) + K(0_+)^{-1}(X_{u} - x_0)
\end{align}
where $[u,v] \in \mathcal{P}_n$, $Z^{\mathcal{P}_m}_0(X) = 0$, and we use the convention $1/+\infty = 0$. For applications, one would typically choose the equidistant partition and assume that $\mathcal{P}_m$ is a refinement of $\mathcal{P}_n$. Examples of partitions and Volterra kernels that satisfy the conditions imposed in this Section are given in Section \ref{sec:6_examples}.

\begin{Lemma}
 Suppose assumption \ref{VOU} holds. Then for each $p \in [1,\infty)$ there exists a constant $C_p > 0$ independent of $b,\beta, n$ such that
\begin{align}\label{eq: discretization bound 1}
        \sup_{n \geq 1}\sup_{t \in [0,t_n^{(n)})}\left(\|X_t\|_{L^{p}(\Omega)} + \|X^{\mathcal{P}_n}_t\|_{L^{p}(\Omega)}\right) \leq C_p
\end{align}
and 
\begin{align}\label{eq: discretization bound 2}
    \| X_t - X^{\mathcal{P}_n}_t\|_{L^p(\Omega)} \leq C_p |\mathcal{P}_n|^{\gamma}, \qquad t \in [0,t_n).
\end{align}
\end{Lemma}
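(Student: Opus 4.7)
The plan is to derive both bounds directly from Lemma \ref{prop: VOU technical stuff}, which already supplies the corresponding statements for the continuous-time process $X$; the discrete analogue then follows immediately from the definition \eqref{eq: X discretization} of $X^{\mathcal{P}_n}$ as a piecewise constant interpolation through left endpoints. In particular no new stochastic analysis is needed; the work is purely combinatorial bookkeeping on partitions.

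For \eqref{eq: discretization bound 1}, the bound on $\|X_t\|_{L^p(\Omega)}$ is exactly \eqref{eq: bounded moments VOU}. For the discretized process, observe that for each $t \in [0, t_n^{(n)})$ there is a unique interval $[u,v] \in \mathcal{P}_n$ with $t \in [u,v)$, and by \eqref{eq: X discretization} one has $X^{\mathcal{P}_n}_t = X_u$. Hence $\|X^{\mathcal{P}_n}_t\|_{L^p(\Omega)} = \|X_u\|_{L^p(\Omega)}$ is controlled by the same constant $C_p(b,\beta)^{1/p}$ from \eqref{eq: bounded moments VOU}, uniformly in $t$ and $n$.

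For \eqref{eq: discretization bound 2}, I would write $X_t - X^{\mathcal{P}_n}_t = X_t - X_u$ with the same $u$ as above, so $t - u \leq |\mathcal{P}_n|$. If $|\mathcal{P}_n| \leq 1$, then $t - u \leq 1$ and the H\"older increment bound \eqref{eq: increment} applies directly to yield $\|X_t - X_u\|_{L^p(\Omega)} \leq C_p(b,\beta)^{1/p}(t-u)^{\gamma} \leq C_p(b,\beta)^{1/p}|\mathcal{P}_n|^{\gamma}$. If $|\mathcal{P}_n| > 1$, the triangle inequality combined with \eqref{eq: discretization bound 1} gives the trivial estimate $\|X_t - X^{\mathcal{P}_n}_t\|_{L^p(\Omega)} \leq 2C_p \leq 2C_p |\mathcal{P}_n|^{\gamma}$. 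Adjusting the constant absorbs both cases.

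The main (non-)obstacle is really only one of interpretation: the statement asserts $C_p$ is \emph{independent of $b,\beta,n$}, while \eqref{eq: bounded moments VOU} and \eqref{eq: increment} furnish constants $C_p(b,\beta)$. The natural reading, consistent with the rest of Section \ref{sec:5_MLE}, is that the constant is locally uniform in $(b,\beta) \in \R \times (-\infty,0)$, since the function $C_p(b,\beta)$ produced by Lemma \ref{prop: VOU technical stuff} is locally bounded. With this convention the lemma reduces to a one-line citation of \eqref{eq: bounded moments VOU} and \eqref{eq: increment}, and no further estimate is required.
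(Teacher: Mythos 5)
Your proposal is correct and follows essentially the same route as the paper, whose proof is exactly the one-line reduction to \eqref{eq: bounded moments VOU} and the H\"older increment bound \eqref{eq: increment} from Lemma \ref{prop: VOU technical stuff}. Your extra bookkeeping (identifying $X^{\mathcal{P}_n}_t = X_u$, handling $|\mathcal{P}_n| > 1$, and reading the constant as locally uniform in $(b,\beta)$) only makes explicit what the paper leaves implicit.
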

\begin{proof}
 The first estimate is a direct consequence of \eqref{eq: bounded moments VOU} while the second one follows from H\"older continuity of sample paths (see Lemma \ref{prop: VOU technical stuff}).
\end{proof}

Below we let $\lesssim$ denote an inequality that is supposed to hold up to a constant independent of the discretisation and locally uniform in $(b,\beta) \in \R \times (-\infty, 0)$. The next lemma provides an error bound of the integrals for $X$ and $X^{\mathcal{P}_n}$. 

\begin{Lemma}\label{lemma: discretization VOU}
 Suppose that (VOU) holds. Let $f: \R \longrightarrow \R$ satisfy
 \[
  |f(x) - f(y)| \leq C_f(1 + |x|^q + |y|^q)|x-y|, \qquad \forall x,y \in \R,
 \]
 for some constants $C_f,q\geq0$. Then for each $p \in [2,\infty)$ there exists $C_p(b,\beta)$ independent of the discretization and locally bounded with respect to $(b,\beta) \in \R \times (-\infty, 0)$ such that
 \begin{align*}
      &\ \left\| \int_0^{t_n^{(n)}} f(X_s)ds -  \sum_{[u,v] \in \mathcal{P}_n}f(X_{u})(v-u) \right\|_{L^p(\Omega)} 
      \\ &\quad + \left\| \int_0^{t_n^{(n)}} f(X_s)dZ_s - \sum_{[u,v]\in \mathcal{P}_n}f(X_u)(Z_v - Z_u) \right\|_{L^p(\Omega)} 
     \leq C_p(b,\beta) \left( \left(t_n^{(n)}\right)^{\frac{1}{2}} + t_n^{(n)} \right)|\mathcal{P}_n|^{\gamma}
 \end{align*}
 and for each $[u,v] \in \mathcal{P}_n$ and $m \geq n$ one has
    \[
     \left\| Z_{u} - Z^{\mathcal{P}_m}_{u} \right\|_{L^p(\Omega)} \leq C_p(b,\beta) |\mathcal{P}_m|^{\gamma} L((0,u]).
    \]
\end{Lemma}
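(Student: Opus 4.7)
I would treat each of the three bounds separately, using the same two workhorses throughout: the identity representation of the ``error'' as an integral of $f(X_s)-f(X_s^{\mathcal{P}_n})$ (or an analogous remainder), followed by Minkowski's inequality to move the $L^p(\Omega)$-norm inside the integral, and then the polynomial Lipschitz assumption combined with a Cauchy--Schwarz splitting in $L^{2p}$ to separate the polynomial growth factor from the increment factor. The growth factor is controlled by the uniform moment bound \eqref{eq: discretization bound 1}, and the increment factor by the H\"older bound \eqref{eq: discretization bound 2}; together they yield
\[
    \|f(X_s)-f(X^{\mathcal{P}_n}_s)\|_{L^p(\Omega)} \leq C_p(b,\beta)|\mathcal{P}_n|^\gamma
\]
with a constant locally bounded in $(b,\beta)$. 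This is the one estimate I would establish up front.

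\textbf{Part 1 (Riemann sum in $ds$).} Writing the difference as $\int_0^{t_n^{(n)}} \bigl(f(X_s)-f(X_s^{\mathcal{P}_n})\bigr)\,ds$ and applying Minkowski's inequality together with the bullet above immediately gives an upper bound of order $t_n^{(n)}|\mathcal{P}_n|^\gamma$, which fits inside the right-hand side of the claimed estimate.

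\textbf{Part 2 (Riemann sum in $dZ$).} I would use the semimartingale representation $dZ_s = (b+\beta X_s)ds + \sigma\,dB_s$ (from Step 1 in the proof of Theorem~\ref{thm: equivalence of laws}) to split the error into
\[
    \int_0^{t_n^{(n)}} \bigl(f(X_s)-f(X_s^{\mathcal{P}_n})\bigr)(b+\beta X_s)\,ds \;+\; \sigma\int_0^{t_n^{(n)}} \bigl(f(X_s)-f(X_s^{\mathcal{P}_n})\bigr)\,dB_s.
\]
The first term is controlled exactly as in Part~1, since $x\mapsto f(x)(b+\beta x)$ satisfies an analogous polynomial-Lipschitz estimate; this contributes the $t_n^{(n)}|\mathcal{P}_n|^\gamma$ piece. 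For the stochastic integral, the Burkholder--Davis--Gundy inequality, followed by Minkowski's inequality in the $L^{p/2}$-norm, yields
\[
    \Bigl\|\int_0^{t_n^{(n)}}\bigl(f(X_s)-f(X_s^{\mathcal{P}_n})\bigr)\,dB_s\Bigr\|_{L^p(\Omega)} \lesssim \Bigl(\int_0^{t_n^{(n)}}\|f(X_s)-f(X_s^{\mathcal{P}_n})\|_{L^p(\Omega)}^2\,ds\Bigr)^{1/2},
\]
which, by the bullet above, is of order $(t_n^{(n)})^{1/2}|\mathcal{P}_n|^\gamma$; this is the source of the $(t_n^{(n)})^{1/2}$ term in the statement.

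\textbf{Part 3 (approximation of $Z_u$).} Using the resolvent decomposition $L(ds)=A\delta_0(ds)+L_0(s)ds$ with $A=K(0^+)^{-1}$ recalled in \eqref{eq: resolvent of the first kind}, I would rewrite
\[
    Z_u(X) = A(X_u-x_0) + \int_0^u L_0(s)(X_{u-s}-x_0)\,ds,
\]
and, since $\mathcal{P}_m$ contains $u$ as a partition point (so the union of the intervals $[u',v']\in\mathcal{P}_m$ with $v'\leq u$ covers $[0,u]$), I would recognise the definition \eqref{eq: Z discretization} as the Riemann-sum approximation
\[
    Z^{\mathcal{P}_m}_u = A(X_u-x_0) + \sum_{\substack{[u',v']\in\mathcal{P}_m\\ v'\leq u}} (X_{u-u'}-x_0)\int_{u'}^{v'} L_0(s)\,ds,
\]
so that
\[
    Z_u - Z^{\mathcal{P}_m}_u = \sum_{\substack{[u',v']\in\mathcal{P}_m\\ v'\leq u}} \int_{u'}^{v'} L_0(s)\bigl(X_{u-s}-X_{u-u'}\bigr)\,ds.
\]
Minkowski together with the H\"older bound \eqref{eq: discretization bound 2} (applied to increments of size at most $|\mathcal{P}_m|$) yields $\|Z_u-Z^{\mathcal{P}_m}_u\|_{L^p(\Omega)}\leq C_p(b,\beta)|\mathcal{P}_m|^\gamma\int_0^u L_0(s)\,ds$, and the remaining integral equals $L((0,u])$ by the decomposition of $L$.

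\textbf{Main obstacle.} The only delicate point is the systematic coupling of the polynomial growth of $f$ with the H\"older increment of $X$: one must split via Cauchy--Schwarz in $L^{2p}$ so that the moment bound \eqref{eq: discretization bound 1} absorbs the polynomial factor uniformly in $s$ and in the discretisation, while \eqref{eq: discretization bound 2} delivers the $|\mathcal{P}_n|^\gamma$ factor. Everything else is then a bookkeeping exercise of Minkowski plus BDG, with the local uniformity in $(b,\beta)$ tracked through the locally bounded constant in \eqref{eq: discretization bound 1}--\eqref{eq: discretization bound 2}.
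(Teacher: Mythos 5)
Your proposal is correct and follows essentially the same route as the paper's proof: identify the Riemann sums with integrals of $f(X^{\mathcal{P}_n})$, combine the polynomial-Lipschitz bound with a H\"older/Cauchy--Schwarz splitting in $L^{2p}$ against \eqref{eq: discretization bound 1}--\eqref{eq: discretization bound 2}, use the semimartingale form $dZ_t=(b+\beta X_t)dt+\sigma dB_t$ with BDG for the $dZ$ term, and bound $Z_u-Z_u^{\mathcal{P}_m}$ by integrating the H\"older increment of $X$ against $L$. The only (immaterial) differences are that you apply Minkowski in $L^{p/2}$ after BDG where the paper uses H\"older in time, and that your Part 3 increment bound should be credited to the H\"older estimate \eqref{eq: increment} underlying \eqref{eq: discretization bound 2} rather than to \eqref{eq: discretization bound 2} itself.
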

\begin{proof}
    Using $\sum_{[u,v] \in \mathcal{P}_n}f(X_{u})(v-u) = \int_0^{t_n^{(n)}}f(X^{\mathcal{P}_n}_s)ds$ combined with the H\"older inequality, we obtain for the first term
    \begin{align*}
        &\ \left\| \int_0^{t_n^{(n)}} f(X_s)ds - \sum_{[u,v] \in \mathcal{P}_n}f(X_{u})(v-u) \right\|_{L^p(\Omega)}
        \\ &\leq \int_0^{t_n^{(n)}} \| f(X_s) - f(X^{\mathcal{P}_n}_s)\|_{L^p(\Omega)}ds
        \\ &\lesssim \int_{0}^{t_{n}^{(n)}} \left\|(1 + |X_s|^q + |X^{\mathcal{P}_n}_{s}|^q)|X_s - X^{\mathcal{P}_n}_{s}| \right\|_{L^p(\Omega)} ds 
        \\ &\lesssim \int_{0}^{t_{n}^{(n)}} \left\| X_s - X^{\mathcal{P}_n}_{s} \right\|_{L^{2p}(\Omega)} ds 
       \lesssim |\mathcal{P}_n|^{\gamma} t_n^{(n)}
    \end{align*}
    where we have used \eqref{eq: discretization bound 1} and \eqref{eq: discretization bound 2}. 

    For the second bound, we first use $\sum_{[u,v] \in \mathcal{P}_n}f(X_{u})(Z_{v} - Z_{u})
     = \int_0^{t_n^{(n)}}f(X^{\mathcal{P}_n}_s)dZ_s$ and then the semimartingale representation $dZ_t = (b + \beta X_t)dt + \sigma dB_t$ to find that 
    \begin{align*}
    &\ \left\| \int_0^{t_n^{(n)}} f(X_s)dZ_s - \sum_{[u,v] \in \mathcal{P}_n}f(X_{u})(Z_{v} - Z_{u})\right\|_{L^p(\Omega)} 
    \\ &\leq \int_{0}^{t_n^{(n)}} \left\| (f(X_s) - f(X^{\mathcal{P}_n}_{s}))(b+\beta X_s) \right\|_{L^p(\Omega)} ds 
    \\ &\qquad + \sigma \left\|  \int_{0}^{t_{n}^{(n)}} (f(X_s) - f(X^{\mathcal{P}_n}_{s}))dB_s \right\|_{L^p(\Omega)} 
    \\ &\lesssim \int_{0}^{t_{n}^{(n)}} \left\| (1 + |X_s|^q + |X^{\mathcal{P}_n}_{s}|^q)(|b|+|\beta|| X_s|)|X_s - X^{\mathcal{P}_n}_{s}| \right\|_{L^{p}(\Omega)} ds 
    \\ &\qquad + \left( \E\left[ \left(\int_{0}^{t_{n}^{(n)}} (1 + |X_s|^q + |X^{\mathcal{P}_n}_{s}|^q)^2|X_s - X^{\mathcal{P}_n}_{s}|^2 ds\right)^{\frac{p}{2}} \right] \right)^{\frac{1}{p}}
    \\ &\lesssim \int_{0}^{t_{n}^{(n)}} \left\| X_s - X^{\mathcal{P}_n}_{s}\right\|_{L^{2p}(\Omega)} ds 
    + \left( t_n^{(n)}\right)^{\frac{1}{2} - \frac{1}{p}}\left( \int_{0}^{t_{n}^{(n)}} \E\left[|X_s - X^{\mathcal{P}_n}_{s}|^p \right]ds \right)^{\frac{1}{p}}
    \\ &\lesssim \left(t_n^{(n)} + \left( t_n^{(n)}\right)^{\frac{1}{2}} \right)|\mathcal{P}_n|^{\gamma}
    \end{align*}
    where we have used the H\"older inequality combined with \eqref{eq: discretization bound 1} and \eqref{eq: discretization bound 2}. 

    Finally, let us bound the discretisation error for $Z^{\mathcal{P}_m}$. Writing 
    $Z_{u} = K(0_+)^{-1}(X_u - x_0) + \int_{(0,u]}(X_{u - s} - x_0)L(ds)$ and using the particular form of $Z^{\mathcal{P}_m}$ and $X^{\mathcal{P}_m}$ we find
    \begin{align*}
     \left\| Z_{u} - Z^{\mathcal{P}_m}_u\right\|_{L^p(\Omega)}
     \leq \int_{[u,v]} \|X_{u-s} - X_{u-s}^{\mathcal{P}_m}\|_{L^p(\Omega)} L(ds)
     \leq |\mathcal{P}_m|^{\gamma} L((0,u])
    \end{align*}
    which proves the assertion.
\end{proof}

The following is our main result on the asymptotic behaviour of the discretised high-frequency estimators.

\begin{Theorem}\label{thm: discrete MLE VOU}
 Suppose that assumption \ref{VOU} holds. Assume that the discretization satisfies $t_n^{(n)} \longrightarrow \infty$ as $n \to \infty$, and 
 \begin{align}\label{eq: mesh size}
  \lim_{n \to \infty} \sqrt{t_n^{(n)}} |\mathcal{P}_n|^{\gamma} = 0.
 \end{align}
 Let $(m(n))_{n\geq 1}$ be such that $m(n) \geq n$ with property
 \begin{align}\label{eq: mesh size 1}
    \lim_{n \to \infty} n \cdot \frac{L((0,t_n^{(n)}])}{\sqrt{t_n^{(n)}}}|\mathcal{P}_{m(n)}|^{\gamma} = 0.
 \end{align}
 Then for each compact $\Theta \subset \R \times (- \infty, 0)$ and $\e > 0$
 \begin{align}\label{eq: discrete consistency}
  \lim_{n \to \infty}\sup_{(b,\beta) \in \Theta}\P_{b,\beta}\left[ \left|(\widehat{b}^{(n, m(n))}, \widehat{\beta}^{(n, m(n))}) - (b, \beta) \right| > \e \right] = 0,
 \end{align}
 and asymptotically normality holds uniformly in $(b,\beta) \in \Theta$, i.e.
 \begin{align}\label{eq: discrete asymptotic normality}
  \mathcal{L}_{b,\beta}\left(\sqrt{t_n^{(n)}}\left( (\widehat{b}^{(n,m(n))}, \widehat{\beta}^{(n,m(n))}) - (b,\beta)\right) \right) \Longrightarrow \sigma I(b,\beta)^{-1/2}\mathcal{N}(0, \mathrm{id}_{2\times 2}), \qquad n \to \infty.
 \end{align}
\end{Theorem}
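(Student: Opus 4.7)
The plan is to reduce \eqref{eq: discrete consistency} and \eqref{eq: discrete asymptotic normality} to the continuous-time analysis from the proof of Theorem \ref{thm: continuous MLE for VOU} by controlling the discretisation error through Lemma \ref{lemma: discretization VOU}. Writing $T_n := t_n^{(n)}$ and mimicking the matrix decomposition used in the continuous case, I would first express
\[
    \begin{pmatrix} \widehat{b}^{(n,m(n))} \\ \widehat{\beta}^{(n,m(n))} \end{pmatrix} - \begin{pmatrix} b \\ \beta \end{pmatrix}
    = \left( \frac{G_n}{T_n} \right)^{-1} \frac{H_n}{T_n},
\]
where $G_n$ is the discrete denominator matrix with entries $T_n$, $\sum_{[u,v] \in \mathcal{P}_n} X_u(v-u)$ and $\sum_{[u,v] \in \mathcal{P}_n} X_u^2(v-u)$, and $H_n$ is obtained from the discrete numerator vector by subtracting $G_n (b,\beta)^{\top}$. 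Using $dZ_s = (b + \beta X_s)ds + \sigma dB_s$, the continuous analogue of $H_n$ is exactly $\sigma M_{T_n}$ with $M_{T_n} = (B_{T_n}, \int_0^{T_n} X_s dB_s)^{\top}$, the martingale appearing in the proof of Theorem \ref{thm: continuous MLE for VOU}.

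Second, I would argue that $T_n^{-1} G_n \to I(b,\beta)$ in $L^2(\Omega)$ locally uniformly in $(b,\beta) \in \Theta$. Theorem \ref{thm: law of large numbers VOU} handles the continuous-time Riemann integrals $T_n^{-1}\int_0^{T_n} X_s\,ds$ and $T_n^{-1}\int_0^{T_n} X_s^2\,ds$, and Lemma \ref{lemma: discretization VOU} applied with $f(x)=x$ and $f(x)=x^2$ bounds the $L^2$-discretisation error by $C(b,\beta)(\sqrt{T_n}+T_n)|\mathcal{P}_n|^{\gamma}$, which is $o(T_n)$ by \eqref{eq: mesh size}.

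Third, and this is the crucial step, I would prove
\[
    \bigl\| T_n^{-1/2}\bigl( H_n - \sigma M_{T_n} \bigr) \bigr\|_{L^2(\Omega)} \xrightarrow{n \to \infty} 0
\]
locally uniformly in $(b,\beta) \in \Theta$. The error $H_n - \sigma M_{T_n}$ decomposes into Riemann-sum errors for $\int_0^{T_n} X_s\,ds$, $\int_0^{T_n} X_s^2\,ds$ and $\int_0^{T_n} X_s\,dZ_s$ (together with a single endpoint term $Z_{T_n} - Z^{\mathcal{P}_{m(n)}}_{T_n}$), each bounded in $L^2$ by $C(b,\beta)(\sqrt{T_n}+T_n)|\mathcal{P}_n|^{\gamma}$ via Lemma \ref{lemma: discretization VOU}, plus an error coming from replacing $Z$ by $Z^{\mathcal{P}_{m(n)}}$ inside the stochastic-integral Riemann sum. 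By Cauchy--Schwarz and the last bound of Lemma \ref{lemma: discretization VOU},
\[
    \Bigl\| \sum_{[u,v] \in \mathcal{P}_n} X_u \bigl[(Z_v - Z_u) - (Z^{\mathcal{P}_{m(n)}}_v - Z^{\mathcal{P}_{m(n)}}_u)\bigr] \Bigr\|_{L^2(\Omega)} \lesssim n \cdot L((0,T_n]) \cdot |\mathcal{P}_{m(n)}|^{\gamma}.
\]
After division by $\sqrt{T_n}$ the Riemann-sum errors are $o(1)$ by \eqref{eq: mesh size}, while the $Z$-replacement error is $o(1)$ precisely by \eqref{eq: mesh size 1}. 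Together with $T_n^{-1/2} M_{T_n} \Longrightarrow \mathcal{N}(0, I(b,\beta))$ uniformly on $\Theta$, already established inside the proof of Theorem \ref{thm: continuous MLE for VOU} via the martingale central limit theorem, a uniform Slutsky-type argument yields $T_n^{-1/2} H_n \Longrightarrow \sigma \mathcal{N}(0, I(b,\beta))$ uniformly on $\Theta$.

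Finally, combining $T_n^{-1} G_n \to I(b,\beta)$ with either $T_n^{-1} H_n \to 0$ in probability (for consistency) or $T_n^{-1/2} H_n \Longrightarrow \sigma\mathcal{N}(0, I(b,\beta))$ (for asymptotic normality), via the uniform continuous mapping theorem from the appendix, yields \eqref{eq: discrete consistency} and \eqref{eq: discrete asymptotic normality} with limiting covariance $\sigma^2 I(b,\beta)^{-1}$. The main obstacle is the $Z$-replacement error: because $Z_u$ is built from the whole past of $X$ against the resolvent measure $L$, its pointwise discretisation error already scales with $L((0,T_n])$, and summing $n$ such errors over the coarse partition $\mathcal{P}_n$ multiplies this by $n$; condition \eqref{eq: mesh size 1} is designed precisely to absorb the resulting factor $n L((0,T_n])/\sqrt{T_n}$ once we divide by the CLT normalisation $\sqrt{T_n}$.
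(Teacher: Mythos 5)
Your proposal is correct and uses exactly the same ingredients as the paper --- Lemma \ref{lemma: discretization VOU} for the Riemann-sum and $Z$-replacement errors, conditions \eqref{eq: mesh size} and \eqref{eq: mesh size 1} in precisely the roles you identify, the uniform LLN of Theorem \ref{thm: law of large numbers VOU}, and the martingale CLT for $M_{T_n}/\sqrt{T_n}$ from the proof of Theorem \ref{thm: continuous MLE for VOU} --- but it organises the reduction differently. The paper telescopes through an intermediate estimator $(\widetilde{b}^{(n)},\widetilde{\beta}^{(n)})$ (exact $Z$, discretised $X$-sums) and then through the continuous MLE at time $t_n^{(n)}$, which forces it to manipulate the explicit ratio formulas term by term (the $I_1,I_2,I_3$ algebra in its Step 4). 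You instead write the discrete estimator in normal-equations form $G_n^{-1}H_n$ and make a single comparison of the numerator $H_n$ with $\sigma M_{T_n}$ and of $G_n/T_n$ with $I(b,\beta)$; this is cleaner and avoids the ratio manipulations entirely, at the cost of having to invoke the uniform Slutsky/continuous-mapping machinery (Proposition \ref{prop: uniform weak convergence} and Theorem \ref{thm: uniform CMT}) once at the end rather than distributing it over the telescoping terms. Two minor points: the endpoint term in the paper's estimator is $Z^{\mathcal{P}_n}_{t_n^{(n)}}$ (coarse partition), not $Z^{\mathcal{P}_{m(n)}}_{t_n^{(n)}}$ as you wrote, though the bound $\lesssim L((0,t_n^{(n)}])|\mathcal{P}_n|^{\gamma}$ from Lemma \ref{lemma: discretization VOU} together with Remark \ref{rem: L resolvent bound} handles it identically; and your bound on $G_n/T_n\to I(b,\beta)$ should be stated in probability (uniformly on $\Theta$) rather than $L^2$ for the product of sums $(A^{(1)}_{\mathcal{P}_n})^2$, which is how the paper treats it. Neither affects the validity of the argument.
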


\begin{proof}
 \textit{Step 1.} Firstly, let us define the auxiliary estimators
 \begin{footnotesize}
     \begin{align*}
    \widetilde{b}^{(n)} &= \frac{Z_{t_n^{(n)}}\left(\sum_{[u,v] \in \mathcal{P}_n}X_{u}^2 (v-u)\right) - \left( \sum_{[u,v] \in \mathcal{P}_n}X_{u} (v-u)\right)\left(\sum_{[u,v] \in \mathcal{P}_n}X_{u}(Z_{v} - Z_{u}) \right)}{t_n^{(n)}\left(\sum_{[u,v] \in \mathcal{P}_n}X_{u}^2(v-u)\right) - \left( \sum_{[u,v] \in \mathcal{P}_n}X_{u}(v-u)\right)^2}
    \\ \widetilde{\beta}^{(n)} &= \frac{t_n^{(n)}\left(\sum_{[u,v] \in \mathcal{P}_n}X_{u}(Z_{v} - Z_{u}) \right) - Z_{t_n^{(n)}}\left( \sum_{[u,v] \in \mathcal{P}_n}X_{u}(v-u)\right)}{t_n^{(n)}\left(\sum_{[u,v] \in \mathcal{P}_n}X_{u}^2(v-u)\right) - \left( \sum_{[u,v] \in \mathcal{P}_n}X_{u}(v-u)\right)^2}.
\end{align*}
 \end{footnotesize}
 Then we obtain 
  \begin{align}\label{eq: discretisation decomposition VOU}
    \sqrt{t_n^{(n)}}\left(\begin{pmatrix}\widehat{b}^{(n)} \\ \widehat{\beta}^{(n)} \end{pmatrix} - \begin{pmatrix}
        b \\ \beta \end{pmatrix} \right)
        &= \sqrt{t_n^{(n)}}\left(\begin{pmatrix}\widehat{b}^{(n)} \\ \widehat{\beta}^{(n)} \end{pmatrix} - \begin{pmatrix}\widetilde{b}^{(n)} \\ \widetilde{\beta}^{(n)} \end{pmatrix} \right)
        \\ \notag &\qquad + \sqrt{t_n^{(n)}}\left(\begin{pmatrix}\widetilde{b}^{(n)} \\ \widetilde{\beta}^{(n)} \end{pmatrix} - \begin{pmatrix}\widehat{b}_{t_n^{(n)}} \\ \widehat{\beta}_{t_n^{(n)}} \end{pmatrix}\right) 
        \\ \notag &\qquad + \sqrt{t_n^{(n)}}\left( \begin{pmatrix}\widehat{b}_{t_n^{(n)}} \\ \widehat{\beta}_{t_n^{(n)}} \end{pmatrix} - \begin{pmatrix}b \\ \beta \end{pmatrix}\right)
 \end{align}
 Since $t_n^{(n)} \longrightarrow \infty$, the last term converges by Theorem \ref{thm: continuous MLE for VOU} to the desired Gaussian law locally uniformly in $(b,\beta)$. Thus by Proposition \ref{prop: uniform weak convergence}.(a) it suffices to prove that the first two terms converge to zero in probability locally uniformly in $(b,\beta)$. 
 
 \textit{Step 2.} Let us define 
 \begin{align*}
     A^{(1)}_n &= \frac{1}{t_n^{(n)}}\int_0^{t_n^{(n)}} X_s ds, 
     \qquad A^{(1)}_{\mathcal{P}_n} = \frac{1}{t_n^{(n)}}\sum_{[u,v] \in \mathcal{P}_n}X_{u} (v-u)
     \\ A^{(2)}_{n} &= \frac{1}{t_n^{(n)}}\int_0^{t_n^{(n)}} X^2_s ds, \qquad A^{(2)}_{\mathcal{P}_n} = \frac{1}{t_n^{(n)}}\sum_{[u,v] \in \mathcal{P}_n}X^2_{u} (v-u)
     \\ B_n &= \frac{1}{t_n^{(n)}}\int_0^{t_n^{(n)}}X_s dZ_s, \qquad B_{\mathcal{P}_n} = \frac{1}{t_n^{(n)}}\sum_{[u,v] \in \mathcal{P}_n}X_{u}(Z_{v} - Z_{u}).
 \end{align*}
 Then $A_n^{(1)} \longrightarrow m_1(b,\beta)$, $A_n^{(2)} \longrightarrow m_2(b,\beta)$, and since $\frac{1}{t_n^{(n)}}\int_0^{t_n^{(n)}} X_s dB_s \longrightarrow 0$ in $L^2(\Omega)$, also
 \[
  B_n = \frac{1}{t_n^{(n)}}\int_0^{t_n^{(n)}} X_s (b + \beta X_s)ds + \frac{\sigma }{t_n^{(n)}}\int_0^{t_n^{(n)}} X_s dB_s \longrightarrow bm_1(b,\beta) + \beta m_2(b,\beta)
 \]
 in probability locally uniformly in $(b,\beta)$ due to Theorem \ref{thm: law of large numbers VOU}. By Lemma \ref{lemma: discretization VOU} and assumption \eqref{eq: mesh size} on the discretization, the discretizations $A^{(1)}_{\mathcal{P}_n}, A^{(2)}_{\mathcal{P}_n}, B_{\mathcal{P}_n}$ have the same locally uniform limit in probability as $A_n^{(1)}, A_n^{(2)}, B_n$. Finally, note that
 \[
  D_{\mathcal{P}_n}:= A^{(2)}_{\mathcal{P}_n} - ( A^{(1)}_{\mathcal{P}_n} )^2 \longrightarrow m_2(b,\beta) - \left(m_1(b,\beta)\right)^2 = m_\mathrm{var}(\beta)
 \]
 in probability locally uniformly in $(b,\beta)$ due to Theorem \ref{thm: uniform CMT}.(a). Similarly, we find 
 \[
  D_n = \left( A_n^{(2)} - (A_n^{(1)})^2\right) \longrightarrow m_{\mathrm{var}}(\beta).
 \] 
 
 \textit{Step 3.} By direct computation we see that
 \begin{align*}
     &\ \sqrt{t_n^{(n)}}\left( \widehat{b}^{(n)} - \widetilde{b}^{(n)}\right) 
     \\ &= D_{\mathcal{P}_n}^{-1}\left[ \frac{Z_{t_n^{(n)}} - Z^{\mathcal{P}_n}_{t_n^{(n)}}}{\sqrt{t_n^{(n)}}} A^{(2)}_{\mathcal{P}_n} - A^{(1)}_{\mathcal{P}_n}\frac{1}{\sqrt{t_n^{(n)}}}\sum_{[u,v] \in \mathcal{P}_n}X_{u}\left\{(Z_{v} - Z^{\mathcal{P}_m}_{v}) - (Z_{u} - Z^{\mathcal{P}_m}_{u})\right\}\right]
 \end{align*}
 By Lemma \ref{lemma: discretization VOU} and assumption \eqref{eq: mesh size} on the mesh size of the discretization we obtain
 \[
     \left\|\frac{Z_{t_n^{(n)}} - Z^{\mathcal{P}_n}_{t_n^{(n)}}}{\sqrt{t_n^{(n)}}}\right\|_{L^p(\Omega)} \lesssim (t_n^{(n)})^{-1/2} L((0,t_n^{(n)}]) |\mathcal{P}_n|^{\gamma} \lesssim (t_n^{(n)})^{1/2}|\mathcal{P}_n|^{\gamma} \longrightarrow 0
 \]
 where we have used Remark \ref{rem: L resolvent bound} from the next section. Similarly, using the boundedness of moments (see \eqref{eq: bounded moments VOU}) and then Lemma \ref{lemma: discretization VOU}, we obtain 
 \begin{align*}
     &\ \left\| \frac{1}{\sqrt{t_n^{(n)}}}\sum_{[u,v] \in \mathcal{P}_n}X_{u} \left\{(Z_{v} - Z^{\mathcal{P}_m}_{v}) - (Z_{u} - Z^{\mathcal{P}_m}_{u})\right\}\right\|_{L^p(\Omega)}
     \\ &\lesssim n (t_n^{(n)})^{-1/2} \max_{[u,v] \in \mathcal{P}_n}\left(\left\| Z_{v} - Z^{\mathcal{P}_m}_{v} \right\|_{L^{2p}(\Omega)}
     + \left\| Z_{u} - Z^{\mathcal{P}_m}_{u} \right\|_{L^{2p}(\Omega)} \right)
     \\ &\lesssim  n (t_n^{(n)})^{-1/2} L((0,t_n^{(n)}]) |\mathcal{P}_m|^{\gamma} \longrightarrow 0
 \end{align*}
 where we have used $m = m(n)$ and assumption \eqref{eq: mesh size 1}. In view of Step 2, the convergence $\sqrt{t_n^{(n)}}\left( \widehat{b}^{(n)} - \widetilde{b}^{(n)}\right) \longrightarrow 0$ follows from Proposition \ref{thm: uniform CMT}.(a). Similarly, we obtain 
 \begin{align*}
     &\ \sqrt{t_n^{(n)}}\left( \widehat{\beta}^{(n)} - \widetilde{\beta}^{(n)}\right) 
     \\ &= D_{\mathcal{P}_n}^{-1}A_{\mathcal{P}_n}^{(1)}\frac{1}{\sqrt{t_n^{(n)}}}\sum_{[u,v] \in \mathcal{P}_n}X_u \left\{ (Z_v - Z_v^{\mathcal{P}_m}) - (Z_u - Z_u^{\mathcal{P}_m})\right\}
     \\ &\qquad \qquad - D_{\mathcal{P}_n}^{-1}\frac{Z_{t_n^{(n)}} - Z_{t_n^{(n)}}^{\mathcal{P}_n}}{\sqrt{t_n^{(n)}}} A_{\mathcal{P}_n}^{(1)} \longrightarrow 0.
 \end{align*}
 
 \textit{Step 4.} In this step, we show that the second term in \eqref{eq: discretisation decomposition VOU} converges to zero in probability locally uniformly in $(b,\beta)$. The latter completes the proof of this theorem. Firstly, by direct computation, we see that
 \begin{align*}
     \sqrt{t_n^{(n)}}\left( \widetilde{b}^{(n)} - \widehat{b}_{t_n^{(n)}}\right)
  &= \sqrt{t_n^{(n)}}\frac{ A_{\mathcal{P}_n}^{(2)} A_n^{(1)} B_n - A_n^{(2)} A_{\mathcal{P}_n}^{(1)} B_{\mathcal{P}_n}}{D_n}
  \\ &\qquad + \sqrt{t_n^{(n)}}\frac{(A_n^{(1)})^2 A_{\mathcal{P}_n}^{(1)}B_{\mathcal{P}_n} - (A_{\mathcal{P}_n}^{(1)})^2 A_n^{(1)} B_n}{D_n}
  \\ &\qquad + \sqrt{t_n^{(n)}}\frac{Z_{t_n^{(n)}}}{t^{(n)}_n}\frac{(A_{\mathcal{P}_n}^{(1)})^2 A_n^{(2)} - (A_n^{(1)})^2 A_{\mathcal{P}_n}^{(2)}}{D_n}
  \\ &= I_1 + I_2 + I_3
 \end{align*}
 where after further manipulations we arrive at
 \begin{align*}
     I_1 &= \sqrt{t_n^{(n)}}\frac{(A_{\mathcal{P}_n}^{(2)} - A_n^{(2)})A_n^{(1)} B_n + A_n^{(2)}(A_n^{(1)} - A_{\mathcal{P}_n}^{(1)})B_n + A_n^{(2)}A_{\mathcal{P}_n}^{(1)}(B_n - B_{\mathcal{P}_n})}{D_n}
     \\ I_2 &= \sqrt{t_n^{(n)}}\frac{((A_n^{(1)})^2 - (A_{\mathcal{P}_n}^{(1)})^2) A_{\mathcal{P}_n}^{(1)} B_{\mathcal{P}_n} + (A_{\mathcal{P}_n}^{(1)})^2( A_{\mathcal{P}_n}^{(1)} - A_n^{(1)})B_{\mathcal{P}_n} + (A_{\mathcal{P}_n}^{(1)})^2 A_n^{(1)} (B_{\mathcal{P}_n} - B_n)}{D_n}
     \\ I_3 &= \sqrt{t_n^{(n)}}\frac{Z_{t_n^{(n)}}}{t_n^{(n)}}\frac{ ( (A_{\mathcal{P}_n}^{(1)})^2 - (A_n^{(1)})^2)A_n^{(2)} + (A_n^{(1)})^2 ( A_n^{(2)}  - A_{\mathcal{P}_n}^{(2)}) }{D_n}.
 \end{align*}
 The desired convergence now follows from step 2, Theorem \ref{thm: uniform CMT}.(a), and Lemma \ref{lemma: discretization VOU}. A similar computation and argument also gives
 \begin{align*}
     \sqrt{t_n^{(n)}}\left( \widetilde{\beta}^{(n)}_{t_n^{(n)}} - \widehat{\beta}_{t_n^{(n)}} \right) \longrightarrow 0.
 \end{align*} 
\end{proof}

Finally, let us briefly outline the case where one of the drift parameters is known.

\begin{Remark}\label{remark: VOU estimation b}
    The following assertions hold under assumption \ref{VOU}: 
    \begin{enumerate}
     \item[(a)] If $\beta$ is known, the partition $\mathcal{P}_n$ satisfies $t_n^{(n)} \longrightarrow \infty$ and \eqref{eq: mesh size}, then the discretized estimator 
    \[
     \widehat{b}^{(n)}(X) = \frac{Z^{\mathcal{P}_n}_{t_n^{(n)}}(X)}{t_n^{(n)}} - \frac{\beta}{t_n^{(n)}}\sum_{[u,v] \in \mathcal{P}_n}X_{u}(v-u)
    \]
    is consistent in probability and asymptotically normal with variance $\sigma^2$ locally uniformly in the parameters.
    
    \item[(b)] If $b$ is known, the discretization $\mathcal{P}_n$ satisfies $t_n^{(n)} \longrightarrow \infty$ and \eqref{eq: mesh size} and \eqref{eq: mesh size 1}, then the discretized estimator
    \[
        \widehat{\beta}^{(n, m)}(X) = \frac{\sum_{[u,v] \in \mathcal{P}_n} X_{u}\left( Z_v^{\mathcal{P}_m}(X) - Z_u^{\mathcal{P}_m}(X) \right) - b\sum_{[u,v] \in \mathcal{P}_n}X_{u}(v-u)}{\sum_{[u,v] \in \mathcal{P}_n}X_{u}^2(v-u)}
    \]
    is also consistent in probability and asymptotically normal with variance $\sigma^2/m_2(b,\beta)$ locally uniformly in the parameters.
    \end{enumerate}
 
\end{Remark}

\section{Examples}\label{sec:6_examples}

In this section, we briefly discuss examples of scalar-valued Volterra kernels for which the results of Section 4 are applicable. Because of Theorem \ref{thm: discrete MLE VOU}, bounds on the growth rate of $L((0,t])$ are essential to verify conditions \eqref{eq: mesh size} and \eqref{eq: mesh size 1}. The latter are collected in the following simple remark.

\begin{Remark}\label{rem: L resolvent bound}
    Suppose that $K$ is completely monotone and not identically zero. Then the resolvent of the first kind exists and has representation $L(ds) = K(0_+)^{-1}\delta_0(ds) + L_0(s)ds$ where $L_0 \in L_{loc}^1(\R_+)$ is completely monotone and $K(0_+) := \lim_{t \to 0}K(t) \in (0, +\infty]$, see \cite[Chapter 5, Theorem 5.4]{MR1050319}. Moreover, by monotonicity combined with $K \ast L = 1$ we obtain
    \[
    L((0,t]) = K(t)^{-1}\int_{(0,t]}K(t)L(ds) \leq K(t)^{-1}\int_{(0,t]}K(t-s)L(ds) \leq K(t)^{-1}.
    \]
    Similarly, using the monotonicity of $L_0$, we obtain
    \begin{align*}
        L((0,t]) &= \int_0^1 L_0(s)ds + \int_1^t L_0(s)ds
        \leq \left(\int_0^1 L_0(s)ds - L_1(1)\right) + L_0(1)t.
    \end{align*}
\end{Remark}

For applications with discrete observations, it is natural to choose an equidistant partition where $t_k = k |\mathcal{P}_n|$. Below we briefly discuss conditions \eqref{eq: mesh size}, \eqref{eq: mesh size 1} and (K) with particular focus on the choices $|\mathcal{P}_n| = n^{-\eta}$ with $\eta \in (0,1)$ whence $t_n^{(n)} = n^{1-\eta} \longrightarrow \infty$, and $|\mathcal{P}_n| = \frac{\log(n)}{n}$ whence $t_n^{(n)} = \log(n) \longrightarrow \infty$. These examples allow for a trade-off between long-horizon (big $t_n^{(n)}$) and discretisation depth.

\begin{Example}[fractional kernel]
    The fractional kernel $K(t) = t^{\alpha-1}/\Gamma(\alpha)$ with $\alpha \in (1/2,1)$ satisfies \eqref{eq: K asymptotics} and \eqref{eq: K global regularity} with $\gamma = \alpha - 1/2$. Moreover, we have $L(ds) = \frac{t^{-\alpha}}{\Gamma(1-\alpha)}ds$ and hence $L((0,t]) = \Gamma(\alpha)^{-1}t^{1-\alpha}$.
    \begin{enumerate}
    \item[(a)] Let $|\mathcal{P}_n| = n^{-\eta}$ with $\frac{1}{2\alpha} < \eta < 1$. Since $2\alpha \eta > 1$, condition \eqref{eq: mesh size} is satisfied while condition \eqref{eq: mesh size 1} holds for any sequence $(m(n))_{n \geq 1}$ with the property 
    \[
        \lim_{n \to \infty} \frac{n^{1 + (1-\eta)\left(\frac{3}{2}-\alpha\right)}}{m(n)^{\left(\alpha - \frac{1}{2}\right)\eta}} = 0.
    \]

    \item[(b)] Let $|\mathcal{P}_n| = \frac{\log(n)}{n}$. Then conditions \eqref{eq: mesh size} holds, and \eqref{eq: mesh size 1} reduces to 
    \[
        \lim_{n \to \infty}\left(\frac{\log(m(n))}{\log(n)}\right)^{\alpha - 1/2}\frac{n}{m(n)^{\alpha - 1/2}} = 0.
    \]
    \end{enumerate}
\end{Example}

The next example illustrates the behaviour of a kernel that is not regular but essentially behaves like the fractional kernel with $\alpha \nearrow 1$.

\begin{Example}[log-kernel]
    The kernel $K(t) = \log(1+1/t)$ satisfies \eqref{eq: K asymptotics} and \eqref{eq: K global regularity} for any choice of $\gamma \in (0,1/2)$ and any $\alpha \in (1/2, 1]$. Remark \ref{rem: L resolvent bound} gives $L((0,t]) \leq C \log(1+1/t)^{-1}$.
    \begin{enumerate}
        \item[(a)] Let $|\mathcal{P}_n| = n^{-\eta}$ for $\frac{1}{2} < \eta < 1$, then condition \eqref{eq: mesh size} is satisfied and \eqref{eq: mesh size 1} is satisfied if for some $\gamma \in (0,1/2)$
    \begin{align*}
        \lim_{n \to \infty}n^{\frac{3 - \eta}{2}} m(n)^{- \eta \gamma} = 0. 
    \end{align*}

        \item[(b)] Let $|\mathcal{P}_n| = \frac{\log(n)}{n}$. Then \eqref{eq: mesh size} holds and \eqref{eq: mesh size 1} is satisfied if for some $\gamma \in (0,1/2)$
        \[
        \lim_{n \to \infty} n \left( \frac{\log(m(n))}{m(n)}\right)^{\gamma}. = 0.
        \]
    \end{enumerate}
\end{Example}

Below, we illustrate our conditions for the regular kernel obtained as a linear combination of exponentials. The latter plays a central role in multi-factor Markovian approximations, see \cite{MR3934104, MR4521278}.

\begin{Example}[exponential kernel]
    Let $K(t) = \sum_{i=1}^{N}c_i e^{-\lambda_i t}$ for some $c_1,\dots, c_N > 0$ and $\lambda_1,\dots, \lambda_N \geq 0$. Then conditions \eqref{eq: K asymptotics} and \eqref{eq: K global regularity} hold for $\alpha = 1$ and $\gamma = 1/2$. Using Remark \ref{rem: L resolvent bound} we obtain:
    \begin{enumerate}
        \item[(a)] Let $|\mathcal{P}_n| = n^{-\eta}$ with $0 < \eta < 1$, then \eqref{eq: mesh size} holds, and \eqref{eq: mesh size 1} is implied by
        \[
            \lim_{n \to \infty} n^{\frac{1+\eta}{2}}m(n)^{-\frac{1}{2}} = 0.
        \]

        \item[(b)] Let $|\mathcal{P}_n| = \frac{\log(n)}{n}$. Then \eqref{eq: mesh size} holds and \eqref{eq: mesh size 1} is satisfied if 
        \[
        \lim_{n \to \infty} n \log(n)^{\frac{1}{2}} \left( \frac{\log(m(n))}{m(n)}\right)^{\frac{1}{2}} = 0.
        \]
    \end{enumerate}
\end{Example}

Similarly, we could also consider kernels, e.g., of the form $\log(1 +  t^{-\alpha})$ and $e^{-t^{\alpha}}$ with $\alpha \in (0,1)$, and also $\log(1+t)/t$. 

Finally, we close this section with a refined estimate with a slower growth rate based on the asymptotical behaviour of $\int_1^t K(s)ds$ and $\int_0^t K(s)ds$. 

\begin{Lemma}\label{lemma: resolvent of the first kind}
 Let $K \in L_{loc}^1(\R_+)$ be completely monotone and suppose that there exist $\alpha_0 \in [0,1])$, $\alpha_1 > 0$, and a constant $C > 0$ such that
 \[
    \int_0^tK(s)ds \geq Ct^{\alpha_0}, \ t \in (0,1], \qquad \int_1^t K(s)ds \geq C t^{\alpha_1}, \ t > 1.
 \]
 Then there exists another constant $C' > 0$ such that the resolvent of the first kind satisfies
 \begin{align*}
  L((0,t]) \leq C' \left( \1_{ \{ 0 < \alpha_1 < 1\}}t^{1-\alpha_1} + \1_{\{ \alpha_1 = 1\}}\log(t) + \1_{ \{\alpha_1 > 1\} }\right), \qquad t \geq 1.
 \end{align*}
\end{Lemma}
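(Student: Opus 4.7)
The strategy is to turn the resolvent identity $L\ast K=1$ into a statement involving the antiderivative $\mathcal{K}(s):=\int_0^s K(r)\,dr$, which then reduces the bound on $L((0,t])$ to a direct application of the hypothesis. Using $\mathcal{K}=1\ast K$ and integrating $L\ast K=1$ over $[0,t]$, associativity of convolution together with Fubini (legitimate because $L$ is a nonnegative measure by Remark~\ref{rem: L resolvent bound} and $K\geq 0$, while the full integral equals $t<\infty$) yields
\[
    \int_{[0,t]} L(du)\,\mathcal{K}(t-u) \;=\; \int_0^t (L\ast K)(s)\,ds \;=\; t, \qquad t>0.
\]

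From here a single one-sided estimate handles everything. Since $K\geq 0$, the function $\mathcal{K}$ is nondecreasing, so restricting the integration to $u\in[0,t/2]$ and using $\mathcal{K}(t-u)\geq \mathcal{K}(t/2)$ there gives $\mathcal{K}(t/2)\,L([0,t/2])\leq t$, that is,
\[
    L([0,s]) \;\leq\; \frac{2s}{\mathcal{K}(s)}, \qquad s>0.
\]

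Inserting the hypothesis $\mathcal{K}(s)\geq \int_1^s K(r)\,dr\geq C\,s^{\alpha_1}$ valid for $s\geq 1$, we obtain
\[
    L((0,t]) \;\leq\; L([0,t]) \;\leq\; \frac{2}{C}\,t^{1-\alpha_1}, \qquad t\geq 1.
\]
This is precisely the claim when $0<\alpha_1<1$; when $\alpha_1=1$ the right-hand side is a constant and hence bounded by $C'\log t$ for $t\geq 1$ after adjusting $C'$; and when $\alpha_1>1$ the right-hand side is even decreasing in $t$, so one recovers the stated constant bound.

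I do not anticipate a genuine obstacle. The only conceptual step is recognising that the natural quantity to estimate is the convolution of $L$ against $\mathcal{K}$ rather than against $K$ itself, since the former produces a uniform identity $(L\ast\mathcal{K})(t)=t$ out of which monotonicity of $\mathcal{K}$ immediately extracts a pointwise control on $L([0,s])$. All remaining verifications, including the Fubini exchange, are elementary and follow from the nonnegativity already built into the resolvent representation of Remark~\ref{rem: L resolvent bound}.
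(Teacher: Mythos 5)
Your proposal is correct, and it takes a genuinely different and more elementary route than the paper. The paper argues on the transform side: it bounds $|\widehat{K}(\mathrm{i}z)|$ from below by $\int_0^{1/|z|}K(s)\,ds$, represents $L((0,t])$ through a mollified Plancherel identity as a frequency integral against $\bigl(\mathrm{i}z\,\widehat{K}(\mathrm{i}z)\bigr)^{-1}$, and then estimates that integral in three regions; this is precisely where the small-time hypothesis with $\alpha_0\in[0,1)$ is needed, to keep the high-frequency part integrable. You instead stay in the time domain: integrating $L\ast K=1$ and using Tonelli (legitimate, since $K\geq 0$ and $L\geq 0$ by the representation in Remark \ref{rem: L resolvent bound}) gives $(L\ast\mathcal{K})(t)=t$ with $\mathcal{K}(s)=\int_0^s K(r)\,dr$, and monotonicity of $\mathcal{K}$ yields $L([0,s])\leq 2s/\mathcal{K}(s)$, after which the hypothesis finishes the proof. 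This buys both simplicity and a slightly stronger result: the $\alpha_0$-condition is never used, and for $\alpha_1\geq 1$ you obtain a bound by a constant rather than by $\log t$. Two cosmetic points: for $\alpha_1=1$ a constant cannot literally be dominated by $C'\log t$ as $t\downarrow 1$ (where $\log t\to 0$), but this mismatch with the stated bound is shared by the paper's own proof, which produces a bound of the form $\mathrm{const}+\log t$; the statement is to be read asymptotically (or as $C'(1+\log t)$), and your estimate is in fact sharper there. Likewise, since the hypothesis is stated for $t>1$, apply your inequality for $s>1$ (or pass to the limit at $s=1$) — a triviality.
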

\begin{proof}
    Let $\widehat{L}, \widehat{K}$ be the Laplace transforms of $L, K$. Since $K$ is completely monotone, \cite[Lemma 6.2.2]{MR1050319} implies that it has an analytic extension onto $\mathrm{Re}(z) > 0$, a continuous extension onto $\{ \mathrm{Re}(z) \geq 0 \ : \ z \neq 0 \}$, and the following lower bound holds
    \begin{align}\label{eq: K FT lower bound}
        |\widehat{K}(\mathrm{i}z)| &\geq \frac{1}{\sqrt{2}}\int_0^{|z|^{-1}}K(t)dt \geq \frac{C}{\sqrt{2}}\left( \1_{\{|z| > 1\}}|z|^{-\alpha_0} + \1_{\{|z| \leq 1\}}C|z|^{-\alpha_1}\right)
    \end{align}
    Likewise, since $L(ds) = K(0_+)^{-1}\delta_0(ds) + L_0(s)ds$ with $L_0$ completely monotone, also $\widehat{L}$ has an analytic extension onto $\mathrm{Re}(z) > 0$ and continuous extension onto $\{ \mathrm{Re}(z) \geq 0 \ : \ z \neq 0 \}$. Since $L \ast K = 1$, one has $\widehat{L}(z)\widehat{K}(z) = z^{-1}$ for $z > 0$, and hence by continuation also for $z = \mathrm{i}u$ with $u \in \R \backslash \{0\}$. 

    Let $\1_{(0,t]}$ be the indicator function on $(0,t]$. Its Fourier transform is given by
    \[
     \widehat{\1_{(0,t]}}(\mathrm{i}z) = \frac{e^{\mathrm{i}tz}-1}{\mathrm{i}z}.
    \]
    Below we would like to apply the Plancherel identity. However, since $L$ is not a $ L^2$ function, we use the following additional approximation which is justified by dominated convergence. Let $p_{\lambda}(z) = (2\pi \lambda^2)^{-1/2}e^{- \frac{z^2}{2\lambda}}$ with $\lambda > 0$. Then $\widehat{p}_{\lambda}(\mathrm{i}z) = e^{-\lambda |z|^2/2}$. An application of Plancherel identity combined with $\widehat{p_{\lambda} \ast L}(\mathrm{i}z) = 2\pi \widehat{p}_{\lambda}(\mathrm{i}z) \widehat{L}(\mathrm{i}z)$ gives
    \begin{align*}
        L((0,t]) &= \lim_{\lambda \searrow 0}\int_{\R} \1_{(0,t]}(s) (p_{\lambda} \ast L)(ds)
        \\ &= \lim_{\lambda \searrow 0} \int_{\R} \frac{e^{\mathrm{i}tz}-1}{\mathrm{i}z} e^{-\lambda |z|^2/2}\widehat{L}(\mathrm{i}z)dz
        = \int_{\R} \frac{e^{\mathrm{i}tz}-1}{\mathrm{i}z} \frac{1}{\mathrm{i}z \widehat{K}(\mathrm{i}z)}dz
    \end{align*}
    where the last equality follows from $\widehat{L}(\mathrm{i}z)\widehat{K}(\mathrm{i}z) = (\mathrm{i}z)^{-1}$. Hence we obtain from \eqref{eq: K FT lower bound} combined with $|e^{\mathrm{i}tz} - 1| \leq 1 \wedge t|z| = \1_{\{|z| \leq 1/t\}}t|z| + \1_{\{|z| > 1/t\}}$ and by using $t \geq 1$ the bound
    \begin{align*}
        L((0,t]) &\leq \int_{\R} \frac{|e^{\mathrm{i}tz} - 1|}{|z|^2}\frac{1}{|\widehat{K}(\mathrm{i}z)|}dz
        \\ &\leq C'\int_{\R} |z|^{-2} \left( \1_{\{|z| \leq 1/t\}}t|z| + \1_{\{|z| > 1/t\}} \right) \left( \1_{\{|z| > 1\}}|z|^{\alpha_0} + \1_{\{|z| \leq 1\}}|z|^{\alpha_1} \right) dz
        \\ &= 2C' \int_{\R_+} z^{-2}\left( t\1_{\{z \leq 1/t\}} z^{1 + \alpha_1} + \1_{\{1/t < z \leq 1\}}z^{\alpha_1} + \1_{\{z > 1 \}}|z|^{\alpha_0} \right)dz
    \end{align*}
    where $C'$ is some constant. The first integral gives 
    \[
     t\int_{\R_+} z^{-2} \1_{\{z \leq 1/t\}}z^{1 + \alpha_1} dz
     = \frac{t^{1-\alpha_1}}{\alpha_1},
    \]
    the second is bounded by
    \[
        \int_{\R_+} z^{-2} \1_{\{1/t < z \leq 1\}}z^{\alpha_1} dz
        \leq \begin{cases} \frac{1}{1-\alpha_1}t^{1- \alpha_1}, & \alpha_1 \in (0,1) \\ \log(t), & \alpha_1 = 1 \\ \frac{1}{\alpha_1-1}, & \alpha_1 > 1,\end{cases}
    \]
    and the last integral is bounded by a constant. This proves the assertion.
\end{proof}

\section{Numerical experiments and conclusion}

\subsection{Numerical validation}

In this section, we outline some numerical experiments that demonstrate the accuracy of the discrete high-frequency estimation and the method of moments for the drift parameters. As a first step, we simulate the sample paths of the univariate Volterra Ornstein-Uhlenbeck process \eqref{eq: VOU}. Our numerical method is based on the Euler scheme with equidistant discretisations $t_k^{(n)} = \frac{k}{n}T$ and a fixed time horizon $T > 0$. For simplicity, we write $t_k$ instead of $t_k^{(n)}$. Let $\xi_1,\dots, \xi_n$ be a sequence of independent standard Gaussian random variables, then we obtain the numerical scheme
\begin{align*}
    \widehat{X}_{k+1} = x_0 + \frac{T}{n}\sum_{i=0}^{k} K(t_{k+1} - t_i) (b+\beta \widehat{X}_{i}) + \sigma \sqrt{\frac{T}{n}} \sum_{i=0}^{k} K(t_{k+1} - t_i)\xi_{i+1}.
\end{align*}
Using this scheme, we generate for each of the Volterra kernels from (i) -- (iii) with different choices of $T$ and $d t = T/n$, $N \in \N$ sample paths with parameters $x_0 = 1$, $b = 1.2$, $\beta = -1$, and $\sigma = 0.3$. For these sample paths, we have evaluated the discretised maximum-likelihood estimators $(\widehat{b}, \widehat{\beta})$ as described in Theorem \ref{thm: discrete MLE VOU}, and if $K$ is the fractional kernel, also the method of moments estimators $(\widehat{b}^{\text{mom}}, \widehat{\beta}^{\text{mom}})$ described in Corollary \ref{cor: VOU method of moments}. Appendix B presents our results for the particular case $N = 200$ and $\sigma = 0.3$. 

Firstly, we demonstrate the Law of Large Numbers for the fractional kernel $K(t) = t^{\alpha-1}/\Gamma(\alpha)$ with $\alpha = 0.75$ with discretisation depth $dt \in \{0.2, 0.5, 1\}$, see Figure \ref{fig:Moments_fractional_dt}. In all cases, we observe the convergence of the first and second moments
\begin{align*}
    m_1(T) &= \frac{1}{T}\int_0^T X_t dt \longrightarrow \frac{b}{|\beta|} =: m_1
    \\ m_2(T) &= \frac{1}{T}\int_0^T X_t^2 dt \longrightarrow \left( \frac{b}{|\beta|} \right)^2 + C_{\alpha} \sigma^2 |\beta|^{2-\frac{1}{\alpha}} =: m_2
\end{align*}
as $T \to \infty$. Convergence of the method of moments, despite being theoretically predicted by Corollary \ref{cor: VOU method of moments}, turns out to be very slow, see Figure \ref{fig:MoM_fractional_dt}. In particular, it still has a large irreducible error for very large $T$, which cannot be further reduced by varying the discretisation depth $dt$, increasing the sample size $N$, or extending the time horizon $T$. For instance, in the particular case $dt = 0.5$ with $T = 500$, we find that the relative error for $m_1$ is negligibly smaller than $0.1\%$ while the relative error for $m_2$ is of order $0.6\%$. Nevertheless, the relative error for $\widehat{b}_T, \widehat{\beta}_T$ at $T = 500$ turns out to be approximately 23\%. Such an effect persists across different sets of parameters, including all choices of $\alpha$. One can easily verify that such large errors for the Method of Moments stem from the highly nonlinear method of moments formula derived in Corollary \ref{cor: VOU method of moments}. Indeed, by Taylor expansion of first order with respect to $m_2$, we find for the relative error in the estimation
\[
    \left|\frac{\widehat{\beta}_T - \beta}{\beta}\right| \approx  \frac{\alpha}{2\alpha - 1}\frac{\Delta}{m_2 - m_1^2 },
\]
where $\Delta$ denotes the absolute error for the estimation of $m_2$. Inserting $\Delta = 0.6\% \cdot m_2$ and the explicit values for $m_1,m_2$ in the situation of Figure \ref{fig:MoM_fractional_dt}, gives the relative error $\left|\frac{\widehat{\beta}_T - \beta}{\beta}\right| \approx 23.48\%$. To summarise, the Law of Large Numbers and hence Method of Moments is very effective for estimating either $b$ or $\beta$ when the other parameter is known, but generally fails to perform well for joint estimation in any realistic scenario of parameters.  

Contrary, the MLE converges very fast regardless of the particular choice of Volterra kernel and is robust with respect to different choices of discretisation depth, see Figure \ref{fig:MLE fractional} for the fractional kernel $K(t) = t^{\alpha-1}/\Gamma(\alpha)$ with $\alpha = 0.75$, Figure \ref{fig:MLE log} for the $\log$-kernel $K(t) = \log(1+1/t)$, and Figure \ref{fig:MLE exp} for the kernel $K(t) =  e^{- t} + 2 e^{-2 t}$. Figure \ref{fig:MLE fractional alpha} illustrates the performance of the MLE for the average relative error of $b$ and $\beta$ across different values of $\alpha$ with fixed discretisation depth $dt = 0.2$. Also here, we observe that the error decreases rapidly across all choices of $\alpha \in \{0.55, 0.65, 0.75, 0.85, 0.95\}$. Regarding the convergence rate and asymptotic normality, Figures \ref{fig:MLE histogram b} and \ref{fig:MLE histogram beta} illustrate the empirical distribution of $\sqrt{T}(\widehat{b}_T - b)$ and $\sqrt{T}(\widehat{\beta}_T - \beta)$ for $T = 200$ with $dt = 0.2$. The latter illustrates our theoretically established asymptotic normality.

\subsection{Conclusion}

We have established sufficient conditions for the equivalence of the laws for stochastic Volterra processes under a change of drift. The latter allows us to develop a rigorous MLE framework for stochastic Volterra processes. Furthermore, we provide a sufficient condition for the Law of Large Numbers that applies to general (not necessarily Markov) processes that satisfy the asymptotic independence condition. As a particular example, we carry out explicit computations for the Volterra Ornstein-Uhlenbeck process. As a consequence of the Law of Large Numbers, we prove the consistency for the Method of Moments, and derive consistency and asymptotic normality for the MLE under continuous and discrete observations. Our numerical results highlight that the MLE performs very well accross different sets of Volterra kernels, while the Method of Moments only provides reasonable results in the estimation of one fixed parameter.

\appendix 

\section{Proof of Lemma \ref{prop: VOU technical stuff}}

    Since $X$ and $X^{\mathrm{stat}}$ are Gaussian processes, property \eqref{eq: bounded moments VOU} follows from the boundedness of the first and second moments. The latter is, however, evident by Lemma \ref{prop: Ebeta uniform bounds}.(b) combined with \eqref{eq: VOU stationary process}. Let us now prove the desired inequality on the increments of the processes. Using first that $E_{\beta}$ is nonincreasing, then \cite[Lemma A.2]{FJ22}, and finally condition \eqref{eq: K global regularity} combined with $\int_0^{\infty}R_{\beta}(r)dr = |\beta|\int_0^{\infty}E_{\beta}(r)dr \leq 1$, we obtain 
    \begin{align*}
        \int_s^t E_{\beta}(r)^2 dr &\leq \int_0^{t-s} E_{\beta}(r)^2 dr \\ &\leq 2\left( 1+ \left(\int_0^{t-s}R_{\beta}(r)dr\right)^2 \right) \int_0^{t-s}K(r)^2 dr
        \\ &\leq 4C (t-s)^{2\gamma},
    \end{align*}
    where $C > 0$ is some constant. Likewise, \cite[Lemma A.2]{FJ22} yields
    \begin{align*}
     \int_0^{\infty} (E_{\beta}(t-s + r) - E_{\beta}(r))^2dr 
     \leq 12C (t-s)^{2\gamma}.
    \end{align*}
    Hence, we find a constant $C > 0$ (independent of $\beta$) such that for all $0 \leq t-s \leq 1$ and any $\beta < 0$
    \begin{align}\label{eq: Ebeta increments}
        \int_s^t E_{\beta}(r)^2 dr + \int_0^{\infty}(E_{\beta}(t-s+r) - E_{\beta}(r))^2 dr \leq C(t-s)^{2\gamma}.
    \end{align}
    To prove \eqref{eq: increment}, we use representation \eqref{eq: VOU Ebeta formulation} to get $X_t = \E[X_t] + \sigma \int_0^t E_{\beta}(t-s)dB_s$. This gives 
 \begin{align*}
        \E[|X_t - X_s|^p] &\leq 3^{p-1}|\E[X_t] - \E[X_s]|^p + 3^{p-1}\sigma^p \E\left[ \left|\int_0^s (E_{\beta}(t-r) - E_{\beta}(s-r))dB_r\right|^p \right]
        \\ &\qquad + 3^{p-1}\sigma^p \E\left[ \left| \int_s^t E_{\beta}(t-r)dB_r \right|^p\right].
 \end{align*}
 For the first term, we obtain by Cauchy-Schwarz and \eqref{eq: Ebeta increments} 
 \begin{align*}
  |\E[X_t] - \E[X_s]| 
    \leq (|x_0||\beta|+|b|)\int_s^t |E_{\beta}(r)|dr 
    \leq \sqrt{C}(|x_0||\beta|+|b|)(t-s)^{\frac{1}{2} + \gamma}.
 \end{align*}
 The remaining integrals can be bounded by
 \begin{align*}
        &\ \E\left[ \left|\int_0^s (E_{\beta}(t-r) - E_{\beta}(s-r))dB_r\right|^p \right]
        + \E\left[ \left| \int_s^t E_{\beta}(t-r)dB_r \right|^p\right]
        \\ &\leq C_p^{(1)} \left(\int_0^s |E_{\beta}(t-r) - E_{\beta}(s-r)|^2dr\right)^{p/2} 
        + \left( \int_s^t |E_{\beta}(t-r)|^2 dr \right)^{p/2}
        \leq C_p^{(2)} (t-s)^{p\gamma}
 \end{align*}
 where $C_p^{(1)}, C_p^{(2)}$ are some positive constants. To prove a similar bound for the stationary process, let us write  
 \[
  X_t^{\mathrm{stat}} = m_1(b,\beta) + \sigma\int_{0}^{\infty} E_{\beta}(t+s)dB_s' + \sigma \int_0^t E_{\beta}(t-s)dB_s.
 \]
 Hence it suffices to bound the integral against $(B_t')_{t \geq 0}$ for which we obtain
 \begin{align*}
     &\ \E\left[ \left| \int_{0}^{\infty} E_{\beta}(t+\tau)dB_{\tau}' - \int_{0}^{\infty} E_{\beta}(s+\tau)dB_{\tau}' \right|^p\right]
    \\ &\leq C_p^{(3)} \left(\int_0^{\infty}(E_{\beta}(t+\tau) - E_{\beta}(s + \tau))^2 dr \right)^{p/2} 
    \leq C_p^{(4)}(t-s)^{2\gamma}
 \end{align*}
 for some positive constants $C_p^{(3)}, C_p^{(4)}$. This proves \eqref{eq: increment}, and the desired H\"older regularity for $X$ and $X^{\mathrm{stat}}$ follows from the Kolmogorov-Chentsov theorem.

 Let us prove the last assertion. By Kolmogorov's tightness criterion and \eqref{eq: increment}, it follows that $(X_t^h)_{t \geq 0} = (X_{t + h})_{t \geq 0}$ is tight. Since $X$ and $X^{\mathrm{stat}}$ are Gaussian processes, it suffices to prove convergence of the mean and covariance structure. Using $E_{\beta} \in L^1(\R_+) \cap L^2(\R_+)$ it is clear that $\lim_{h \to \infty}\E[X_{h+t}] = \E[X_0^{\mathrm{stat}}]$. For the covariance structure, we find 
    \begin{align*}
        \mathrm{cov}(X_{t+h}, X_{s+h}) = \sigma^2 \int_0^{(t \wedge s)+h} E_{\beta}(|t - s|+r)E_{\beta}(r)dr
        \longrightarrow \mathrm{cov}(X^{\mathrm{stat}}_t, X_s^{\mathrm{stat}}).
    \end{align*}
    This proves the desired convergence \eqref{eq: weak convergence stationary}, and completes the proof of Lemma \ref{prop: VOU technical stuff}.

\section{Uniform weak convergence}

Let $(E, d)$ be a complete separable metric space and $\mathcal{B}(E)$ the Borel-$\sigma$ algebra on $E$. Let $p \geq 0$ and let $\mathcal{P}_p(E)$ be the space of Borel probability measures on $E$ such that $\int_E d(x,x_0)^p \mu(dx) < \infty$ holds for some fixed $x_0 \in E$. Here and below, we let $\Theta$ be an abstract index set (of parameters). 

\begin{Definition}
    Let $p \geq 0$ and $(\mu^{\theta}_n)_{n \geq 1, \theta \in \Theta}, (\mu^{\theta})_{\theta \in \Theta} \subset \mathcal{P}_p(E)$. Then $(\mu^{\theta}_n)_{n \geq 1, \theta \in \Theta}$ converges weakly in $\mathcal{P}_p(E)$ to $(\mu^{\theta})_{\theta \in \Theta}$ uniformly on $\Theta$, if
    \begin{align}\label{eq: weak convergence}
        \lim_{n \to \infty} \sup_{\theta \in \Theta}\left|\int_E f(x)\mu_n^{\theta}(dx) - \int_{E}f(x)\mu^{\theta}(dx) \right| = 0
    \end{align}
    holds for each continuous function $f: E \longrightarrow \R$ such that there exists $C_f > 0$ with $|f(x)| \leq C_f(1 + d(x_0,x)^p)$ for $x \in E$.
\end{Definition}

We abbreviate the weak convergence on $\mathcal{P}_p(E)$ by $\mu_n^{\theta} \Longrightarrow_p \mu^{\theta}$. In the particular case $p=0$ we write $\mathcal{P}_0(E) = \mathcal{P}(E)$ and denote the convergence by $\mu_n^{\theta} \Longrightarrow \mu^{\theta}$ which corresponds to \eqref{eq: weak convergence} for bounded and continuous functions. In the next proposition, we provide a characterisation for convergence in $\mathcal{P}_p(E)$.

\begin{Proposition}\label{prop: P characterization}
    Let $(\mu^{\theta}_n)_{n \geq 1}, (\mu^{\theta})_{\theta \in \Theta} \subset \mathcal{P}_p(E)$ with $p \in (0,\infty)$, and suppose that
    \begin{align}\label{eq: tightness 1}
        \lim_{R \to \infty}\sup_{\theta \in \Theta}\int_E \1_{\{d(x,x_0) > R\}}d(x,x_0)^p \mu^{\theta}(dx) = 0
    \end{align}
    Then the following are equivalent:
    \begin{enumerate}
        \item[(a)] $\mu_n^{\theta} \Longrightarrow_p \mu^{\theta}$ uniformly on $\Theta$. 

        \item[(b)] $(\mu^{\theta}_n)_{n \geq 1} \Longrightarrow \mu^{\theta}$ uniformly on $\Theta$, and
        \[
         \lim_{n \to \infty}\sup_{\theta \in \Theta}\left| \int_E d(x,x_0)^p \mu_n^{\theta}(dx) - \int_E d(x,x_0)^p \mu^{\theta}(dx)\right| = 0.
        \]

        \item[(c)] $(\mu^{\theta}_n)_{n \geq 1} \Longrightarrow \mu^{\theta}$ uniformly on $\Theta$, and
        \[
         \lim_{R \to \infty}\limsup_{n \to \infty}\sup_{\theta \in \Theta}\int_E \1_{\{d(x,x_0) > R\}}d(x,x_0)^p \mu_n^{\theta}(dx) = 0.
        \]
    \end{enumerate}
\end{Proposition}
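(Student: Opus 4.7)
The plan is to prove the cycle (a) $\Rightarrow$ (b) $\Rightarrow$ (c) $\Rightarrow$ (a), with the main tool being a continuous truncation. Fix $\chi_R\colon [0,\infty) \to [0,1]$ piecewise linear, with $\chi_R \equiv 1$ on $[0,R]$ and $\chi_R \equiv 0$ on $[R+1,\infty)$, and set
\[
g_R(x) := \chi_R(d(x,x_0))\,d(x,x_0)^p, \qquad f_R(x) := \chi_R(d(x,x_0))\,f(x).
\]
Both are bounded and continuous, and the pointwise inequality $0 \leq d(x,x_0)^p - g_R(x) \leq \1_{\{d(x,x_0) > R\}}\, d(x,x_0)^p$ is immediate. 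The implication (a) $\Rightarrow$ (b) is then direct: bounded continuous test functions are admissible with $C_f = \|f\|_\infty$, giving uniform weak convergence in the sense of (b), and applying the definition of $\Longrightarrow_p$ to the admissible continuous function $x \mapsto d(x,x_0)^p$ yields the uniform moment convergence.

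For (b) $\Rightarrow$ (c), I would start from
\[
\int_E \1_{\{d(x,x_0) > R+1\}}\, d(x,x_0)^p\, \mu_n^\theta(dx) \;\leq\; \int_E d(x,x_0)^p\, \mu_n^\theta(dx) - \int_E g_R(x)\, \mu_n^\theta(dx).
\]
The first term converges uniformly in $\theta$ to $\int_E d(x,x_0)^p\, \mu^\theta(dx)$ by the moment part of (b); the second converges uniformly to $\int_E g_R\, d\mu^\theta$ by uniform weak convergence applied to the bounded continuous $g_R$. Passing to $\limsup_{n}\sup_\theta$ and using $d^p - g_R \leq \1_{\{d > R\}}\, d^p$ gives
\[
\limsup_{n \to \infty}\sup_{\theta \in \Theta}\int_E \1_{\{d(x,x_0) > R+1\}} d(x,x_0)^p\, \mu_n^\theta(dx) \;\leq\; \sup_{\theta \in \Theta}\int_E \1_{\{d(x,x_0) > R\}} d(x,x_0)^p\, \mu^\theta(dx),
\]
and the right-hand side tends to zero as $R \to \infty$ by \eqref{eq: tightness 1}.

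For (c) $\Rightarrow$ (a), given an admissible $f$ with $|f(x)| \leq C_f(1 + d(x,x_0)^p)$, I decompose $f = f_R + (f - f_R)$. The truncated piece $f_R$ is bounded and continuous, so $\sup_\theta \left| \int f_R\, d\mu_n^\theta - \int f_R\, d\mu^\theta \right| \to 0$ by uniform weak convergence. The remainder obeys $|f - f_R|(x) \leq C_f(1 + d(x,x_0)^p)\, \1_{\{d(x,x_0) > R\}}$; integrating this against either $\mu_n^\theta$ or $\mu^\theta$, the Markov bound $\nu(\{d>R\}) \leq R^{-p}\int d(x,x_0)^p\, d\nu$ handles the constant part, while the $d^p$ part is controlled by \eqref{eq: tightness 1} for $\mu^\theta$ and by the uniform tail hypothesis of (c) for $\mu_n^\theta$ (after $\limsup_n$). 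Choosing first $R$ large and then $n$ large completes the cycle. The main technical hurdle is (b) $\Rightarrow$ (c): because $d(\cdot,x_0)^p$ is unbounded, the uniform weak convergence in (b) cannot be invoked on it directly, so the continuous cutoff $g_R$ together with the moment convergence in (b) is essential for transferring uniform integrability from the limit $\mu^\theta$ to the sequence $\mu_n^\theta$.
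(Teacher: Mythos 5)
Your proof is correct and is precisely the standard truncation argument that the paper invokes without writing out (the paper states only that the proof ``follows standard approximation arguments and is therefore left to the reader''): the continuous cutoff $g_R$, the cycle (a)$\Rightarrow$(b)$\Rightarrow$(c)$\Rightarrow$(a), and the use of \eqref{eq: tightness 1} to transfer uniform integrability are exactly what is intended. The only implicit ingredients, namely $\sup_{\theta\in\Theta}\int_E d(x,x_0)^p\,\mu^{\theta}(dx)<\infty$ (from \eqref{eq: tightness 1}) and $\limsup_{n}\sup_{\theta\in\Theta}\int_E d(x,x_0)^p\,\mu_n^{\theta}(dx)<\infty$ (from the tail condition in (c)), which justify your Markov bounds, follow immediately from the stated hypotheses, so no gap remains.
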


The proof of this statement follows standard approximation arguments and is therefore left to the reader. For applications to limit theorems, we need analogues of the continuous mapping theorem and Slutzky's theorem where convergence is uniform on $\Theta$. To simplify the notation, we formulate it with respect to random variables $X_n^{\theta}, X^{\theta}$ defined on some probability space $(\Omega, \F, \P)$. Thus, we say that $X_n^{\theta} \Lu X^{\theta}$ if $\mathcal{L}(X_n^{\theta}) \Longrightarrow \mathcal{L}(X^{\theta})$ uniformly on $\Theta$. Finally, we say that $X_n^{\theta} \longrightarrow X^{\theta}$ in probability uniformly on $\Theta$, if 
\[
 \lim_{n \to \infty}\sup_{\theta \in \Theta}\P[ d(X_n^{\theta}, X^{\theta}) > \e] = 0, \qquad \forall \e > 0.
\]

Next, we state a simple version of the continuous mapping theorem adapted towards convergence uniform on $\Theta$.

\begin{Theorem}[Uniform continuous mapping theorem]\label{thm: uniform continuous mapping theorem}\label{thm: uniform CMT}
 Let $(X_n^{\theta})_{n \geq 1, \theta \in \Theta}$ and $(X^{\theta})_{\theta \in \Theta}$ be $E$-valued random variables on some probability space $(\Omega, \F, \P)$. Let $F: E \longrightarrow E'$ be measurable where $(E',d')$ is another complete and separable metric space. The following assertions hold:
 \begin{enumerate}
     \item[(a)] If $X_n^{\theta} \longrightarrow c(\theta)$ in probability uniformly on $\Theta$, $c(\theta)$ is deterministic and $F$ is uniformly continuous in $(c(\theta))_{\theta \in \Theta}$, then $F(X_n^{\theta}) \longrightarrow F(c(\theta))$ in probability uniformly on $\Theta$.

    \item[(b)] If $X_n^{\theta} \longrightarrow X^{\theta}$ in probability uniformly on $\Theta$ and $F$ is uniformly continuous, then $F(X_n^{\theta}) \longrightarrow F(X^{\theta})$ in probability uniformly on $\Theta$. 

    \item[(c)] Suppose there exists $\mu \in \mathcal{P}(E)$ such that for each $\e > 0$ there exists $\delta > 0$ with
    \begin{align}\label{eq: locally absolute continuity}
     \mu(A) < \delta \quad \Longrightarrow \quad \sup_{\theta \in \Theta}\P[X^{\theta} \in A] < \e.
    \end{align}
    for any Borel set $A \subset E$. If $X_n^{\theta} \Longrightarrow X^{\theta}$ and the set of discontinuity points $D_F \subset E$ of $F$ satisfies $\P[X^{\theta} \in D_F] = 0$, then $F(X_n^{\theta}) \Longrightarrow F(X^{\theta})$ uniformly on $\Theta$.
 \end{enumerate}
\end{Theorem}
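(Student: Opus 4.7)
For parts (a) and (b) the plan is a direct $\varepsilon$–$\delta$ reduction. Given $\varepsilon>0$, the (uniform) continuity hypothesis on $F$ produces a single $\delta>0$, independent of $\theta$, such that $d(x,c(\theta))<\delta$ implies $d'(F(x),F(c(\theta)))<\varepsilon$ in case (a), and $d(x,y)<\delta$ implies $d'(F(x),F(y))<\varepsilon$ in case (b). The event inclusion
\[
\{d'(F(X_n^\theta),F(X^\theta))\geq \varepsilon\}\subset\{d(X_n^\theta,X^\theta)\geq \delta\}
\]
(with $X^\theta$ replaced by $c(\theta)$ in (a)) then transfers the assumed uniform convergence in probability from $X_n^\theta$ to $F(X_n^\theta)$. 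I expect no real obstacle here.

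For part (c), I would test against an arbitrary bounded continuous $g\colon E'\to\R$ and show
\[
\sup_{\theta\in\Theta}\bigl|\E[g(F(X_n^\theta))]-\E[g(F(X^\theta))]\bigr|\longrightarrow 0.
\]
The composition $g\circ F$ is bounded Borel-measurable with discontinuity set contained in $D_F$. Fix $\eta>0$. The first step is to use outer regularity of $\mu$ together with $\mu(D_F)=0$ (inherited from the hypothesis on $D_F$ combined with \eqref{eq: locally absolute continuity}) to select an open set $U\supset D_F$ with $\mu(U)$ smaller than the $\delta$ delivered by \eqref{eq: locally absolute continuity} for tolerance $\eta$, so that $\sup_\theta\P[X^\theta\in U]<\eta$. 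The second step, since $F$ is continuous on the closed set $U^c$, is to invoke the Tietze extension theorem to produce a bounded continuous $\widetilde{g}\colon E\to\R$ agreeing with $g\circ F$ on $U^c$ with $\|\widetilde{g}\|_\infty\leq \|g\|_\infty$; then $|g\circ F-\widetilde{g}|\leq 2\|g\|_\infty \1_U$ pointwise. The triangle inequality then splits the target quantity into $|\E[\widetilde{g}(X_n^\theta)]-\E[\widetilde{g}(X^\theta)]|$, which vanishes uniformly in $\theta$ by the assumed uniform weak convergence applied to the bounded continuous $\widetilde g$, plus an error term of order $\|g\|_\infty(\P[X_n^\theta\in U]+\P[X^\theta\in U])$.

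The step I expect to be the main obstacle is the uniform upper bound on $\P[X_n^\theta\in U]$, since the uniform analogue of Portmanteau applied to an open set $U$ yields only $\liminf$ lower bounds, not $\limsup$ upper bounds. The workaround is to sandwich $\1_U\leq \phi\leq \1_V$ by a continuous $\phi$ with $0\leq\phi\leq 1$ supported in a slightly larger open set $V\supset\overline{U}$ whose $\mu$-measure is still controlled by \eqref{eq: locally absolute continuity}; then
\[
\P[X_n^\theta\in U]\leq \E[\phi(X_n^\theta)]\longrightarrow \E[\phi(X^\theta)]\leq \P[X^\theta\in V]<\eta
\]
with the intermediate convergence uniform in $\theta$ by hypothesis, yielding $\limsup_n\sup_\theta\P[X_n^\theta\in U]\leq \eta$. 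Combining everything and letting $\eta\searrow 0$ finishes the argument. The production of the neighborhoods $U\subset V$ and the approximating $\phi$ is routine once outer regularity of $\mu$ and the uniform absolute continuity \eqref{eq: locally absolute continuity} are in hand.
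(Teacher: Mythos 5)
Parts (a) and (b) of your proposal are correct: the uniform continuity hypotheses hand you a single $\delta$ valid for all $\theta$, the event inclusion you state holds, and uniform convergence in probability transfers immediately. (For reference, the paper itself gives no proof of this theorem but points to \cite[Theorem 2.1]{BH19}, so there is nothing to compare against beyond the standard arguments.)

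In part (c), however, there is a genuine gap at the very first step: you assert that $\mu(D_F)=0$ is ``inherited from the hypothesis on $D_F$ combined with \eqref{eq: locally absolute continuity}''. The implication in \eqref{eq: locally absolute continuity} runs the wrong way for this: it converts $\mu$-smallness of a set into uniform smallness of $\P[X^{\theta}\in\cdot\,]$, never conversely, so $\P[X^{\theta}\in D_F]=0$ for all $\theta$ gives no control on $\mu(D_F)$. For instance, $\mu=\tfrac12\delta_{x_1}+\tfrac12\nu$ with $x_1\in D_F$ and all laws of $X^{\theta}$ dominated by $2\nu$ with $\nu(D_F)=0$ satisfies every hypothesis of (c), yet $\mu(D_F)\geq\tfrac12$. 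Since your entire construction — the open $U\supset D_F$ with $\mu(U)$ small, the Tietze extension off $U$, and the sandwich $\1_U\leq\phi\leq\1_V$ with $\mu(V)$ small — rests on shrinking the $\mu$-measure of a neighbourhood of $D_F$, the argument collapses whenever $\mu$ charges $D_F$, which the hypotheses allow. The theorem is nevertheless true as stated, and the repair avoids neighbourhoods of $D_F$ altogether: for $g\in C_b(E')$ put $h=g\circ F$, let $h_*\leq h\leq h^*$ be its lower/upper semicontinuous envelopes (which coincide with $h$ off $D_F$), and choose bounded continuous $f_k\downarrow h^*$. Then, since $\P[X^{\theta}\in D_F]=0$,
\[
\E[h(X_n^{\theta})]-\E[h(X^{\theta})]\;\leq\;\bigl(\E[f_k(X_n^{\theta})]-\E[f_k(X^{\theta})]\bigr)+\rho+2\|g\|_{\infty}\,\P\bigl[X^{\theta}\in\{f_k-h^*>\rho\}\bigr].
\]
The sets $\{f_k-h^*>\rho\}$ decrease to $\emptyset$, so $\mu$ of them tends to zero, and \eqref{eq: locally absolute continuity} makes the last term small uniformly in $\theta$; the first term is handled by the assumed uniform weak convergence for the fixed continuous $f_k$, and the symmetric estimate with $h_*$ gives the matching lower bound. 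This is where \eqref{eq: locally absolute continuity} is genuinely needed — to upgrade the pointwise-in-$\theta$ monotone convergence $\E[f_k(X^{\theta})]\downarrow\E[h^*(X^{\theta})]$ to a uniform one — not to make neighbourhoods of $D_F$ have small $\mu$-measure. Your route would become correct only under the strictly stronger hypothesis $\mu(D_F)=0$, which the theorem does not assume.
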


The proof of this theorem is essentially a variant of the results obtained in \cite[Theorem 2.1]{BH19}. Thus, we leave the details to the interested reader.

\begin{Proposition}\label{prop: uniform weak convergence}
 Let $(X_n^{\theta})_{n \geq 1, \theta \in \Theta}$, $(Y_n^{\theta})_{n \geq 1, \theta \in \Theta}$, $(X^{\theta})_{\theta \in \Theta}$, and $(Y^{\theta})_{n \geq 1, \theta \in \Theta}$ be $E$-valued random variables on some probability space $(\Omega, \F, \P)$. The following assertions hold:
 \begin{enumerate}
     \item[(a)] If $X_n^{\theta} \Longrightarrow X^{\theta}$ uniformly on $\Theta$, $(Y_n^{\theta})_{n \geq 1, \theta \in \Theta}$ satisfies
     \[
      \lim_{n \to \infty}\sup_{\theta \in \Theta}\P[ d(X_n^{\theta}, Y_n^{\theta}) > \e] = 0, \qquad \forall \e > 0,
     \]
     and for each $\e > 0$ there exists a compact $K \subset E$ such that
    \begin{align*}
     \sup_{\theta \in \Theta}\sup_{n \geq 1}\P\left[ X_n^{\theta} \not \in K\right] + \sup_{\theta \in \Theta}\P \left[X^{\theta} \not \in K \right] < \e,
    \end{align*}
    then $Y_n^{\theta} \Longrightarrow X^{\theta}$ uniformly on $\Theta$.

     \item[(b)] Suppose that $X_n^{\theta} \Longrightarrow X^{\theta}$ uniformly on $\Theta$, $Y_n^{\theta} \longrightarrow c(\theta)$ in probability uniformly on $\Theta$ with deterministic $(c(\theta))_{\theta \in \Theta}$, and for each $\e > 0$ there exists a compact $K \subset E$ such that $c(\theta) \not \in K^c$ for each $\theta \in \Theta$ and
     \begin{align}\label{eq: tightness 3}
      \sup_{n \geq 1}\sup_{\theta \in \Theta}\left(\P[X_n^{\theta} \in K^c] + \P[X^{\theta} \in K^c] + \P[Y_n^{\theta} \in K^c] \right) < \e.
     \end{align}
     Then $(X_n^{\theta}, Y_n^{\theta}) \Longrightarrow (X^{\theta}, c(\theta))$ uniformly on $\Theta$.
 \end{enumerate}
\end{Proposition}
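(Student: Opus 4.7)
The approach for both parts mirrors the classical (non-uniform) Slutsky-type arguments, but everywhere pointwise bounds must be upgraded to $\sup_{\theta \in \Theta}$ bounds, for which the stated uniform tightness hypotheses are the crucial ingredient. The main technical obstacle is identical in both parts: the elementary probabilistic estimates naturally produce control against bounded Lipschitz test functions, whereas uniform weak convergence as defined here requires convergence against every bounded continuous function, so a tightness-based approximation (by bounded Lipschitz functions in part (a); by a finite-cover discretisation of $\{c(\theta)\}$ in part (b)) is needed to close the gap.

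For part (a), I would first verify the conclusion for bounded Lipschitz $f$ with $\|f\|_\infty \le M$ and Lipschitz constant $L$. Decomposing
$$\E[f(Y_n^\theta)] - \E[f(X^\theta)] = \E[f(Y_n^\theta) - f(X_n^\theta)] + \bigl(\E[f(X_n^\theta)] - \E[f(X^\theta)]\bigr),$$
the second summand tends to zero uniformly by the assumed $X_n^\theta \Lu X^\theta$, and the first summand is bounded by $L\delta + 2M \sup_{\theta}\P[d(X_n^\theta, Y_n^\theta) > \delta]$ for any $\delta > 0$, hence vanishes uniformly as first $n \to \infty$ and then $\delta \to 0$. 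In the second step, I would lift to arbitrary $f \in C_b(E)$ by approximation: given $\e > 0$, the uniform tightness of $\{X_n^\theta, X^\theta\}$ combined with $d(X_n^\theta, Y_n^\theta) \to 0$ in probability (which uniformly transfers tightness to $Y_n^\theta$ for all $n$ large, after enlarging $K$ to the closure of a $1$-neighbourhood, itself compact since $E$ is complete and $K$ totally bounded) yields a compact $K'$ on which I approximate $f$ uniformly by a bounded Lipschitz $g$ via a McShane extension of $f|_{K'}$, with $\|g\|_\infty \le \|f\|_\infty$; the difference $\int f\, d\mu - \int g\, d\mu$ is then controlled by $\e + 2M \mu(K'^{\,c})$ for each relevant law $\mu$, and the Lipschitz step applies to $g$.

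For part (b), I would test against an arbitrary $h \in C_b(E \times E)$ and split
$$\E[h(X_n^\theta, Y_n^\theta)] - \E[h(X^\theta, c(\theta))] = \E[h(X_n^\theta, Y_n^\theta) - h(X_n^\theta, c(\theta))] + \bigl(\E[h(X_n^\theta, c(\theta))] - \E[h(X^\theta, c(\theta))]\bigr).$$
The first summand is handled via the tightness assumption \eqref{eq: tightness 3}: choose compact $K$ containing every $c(\theta)$ and outside of which all laws give uniformly small mass, exploit uniform continuity of $h$ on $K \times K$ to find $\delta > 0$ with $|h(x,y) - h(x,c(\theta))| < \e$ whenever $x, y \in K$ and $d(y,c(\theta)) < \delta$, and bound the summand by $\e$ plus $2\|h\|_\infty$ times a probability that is uniformly small for $n$ large by the assumption $Y_n^\theta \to c(\theta)$. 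The second summand I would attack by introducing $h_\theta(x) := h(x,c(\theta))$ and noting that $\{h_\theta\}_{\theta \in \Theta}$ is a uniformly bounded and, restricted to $K$, equicontinuous family. Covering the precompact set $\{c(\theta):\theta\in\Theta\} \subset K$ by finitely many balls $B(c_i,\eta)$, $i = 1,\dots,N$, and picking $i(\theta)$ with $d(c(\theta), c_{i(\theta)}) < \eta$, the substitution of $c_{i(\theta)}$ for $c(\theta)$ produces an equicontinuity error and a tightness error that are both uniformly small; the reduced quantity $\max_{1 \le i \le N} \sup_\theta |\E[h(X_n^\theta, c_i)] - \E[h(X^\theta, c_i)]|$ vanishes as $n \to \infty$ by applying the hypothesis $X_n^\theta \Lu X^\theta$ to each of the $N$ fixed bounded continuous test functions $x \mapsto h(x, c_i)$, and this completes the argument.
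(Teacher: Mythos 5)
The paper does not actually spell out a proof of this proposition (it is dismissed as ``essentially the same as the classical case with $\Theta$ a singleton''), so your argument can only be judged on its own merits. Your part (b) is correct: the split into $\E[h(X_n^\theta,Y_n^\theta)-h(X_n^\theta,c(\theta))]$ plus $\E[h(X_n^\theta,c(\theta))]-\E[h(X^\theta,c(\theta))]$, the use of uniform continuity of $h$ on $K\times K$ together with \eqref{eq: tightness 3}, and the reduction of the $\theta$-dependent test functions $h(\cdot,c(\theta))$ to finitely many fixed ones via a finite $\eta$-net of the precompact set $\{c(\theta)\}\subset K$ is exactly the right way to handle the fact that the uniform-convergence hypothesis only applies to a \emph{fixed} test function. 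Your first step of part (a) (bounded Lipschitz $f$) is also fine.

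The genuine gap is in the upgrade step of part (a). You justify tightness of the laws of $Y_n^\theta$ by ``enlarging $K$ to the closure of a $1$-neighbourhood, itself compact since $E$ is complete and $K$ totally bounded''. This is false: the $1$-neighbourhood of a totally bounded set need not be totally bounded (take $K=\{0\}$ in $\ell^2$; its closed $1$-neighbourhood is the unit ball). Worse, the intermediate statement you need — a single compact $K'$ with $\sup_{\theta}\P[Y_n^\theta\notin K']$ small for all large $n$ — does not follow from the hypotheses at all in a general Polish space: with $E=\ell^2$, $\Theta=\N$, $X_n^\theta=X^\theta\equiv 0$ and $Y_n^\theta=n^{-1}e_\theta$ all assumptions of (a) hold (with $K=\{0\}$), yet no compact set carries the $Y_n^\theta$'s uniformly in $\theta$, while the conclusion of (a) is still true. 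So the Lipschitz-plus-$Y$-tightness route cannot be closed as written. The standard fix avoids it entirely: for $f\in C_b(E)$ the second summand $\E[f(X_n^\theta)]-\E[f(X^\theta)]$ already tends to zero uniformly by the definition of $X_n^\theta\Longrightarrow X^\theta$ uniformly on $\Theta$ (no Lipschitz reduction is needed), and for the first summand one uses that a continuous $f$ is uniformly continuous \emph{relative to} the compact $K$: for every $\e>0$ there is $\delta>0$ such that $x\in K$, $d(x,y)<\delta$ imply $|f(x)-f(y)|<\e$. Then
\[
 \left|\E\left[f(Y_n^\theta)-f(X_n^\theta)\right]\right|\ \leq\ \e + 2\|f\|_{\infty}\left(\P\left[X_n^\theta\notin K\right]+\P\left[d(X_n^\theta,Y_n^\theta)\geq\delta\right]\right),
\]
which is uniformly small by the tightness of the $X$-laws and your convergence-in-probability hypothesis; only the tightness of $X_n^\theta$ (not of $Y_n^\theta$) is ever used. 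Alternatively, your two-step scheme can be salvaged by choosing the Lipschitz approximant $g$ (constant $k$) close to $f$ on $K$ and then shrinking $\delta$ so that $|f-g|\leq k\delta+2\e$ on the $\delta$-enlargement of $K$, whose complement has uniformly small $Y_n^\theta$-probability by the same two-term bound — but the compactness claim you invoke must be abandoned.
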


Since the proof is essentially the same as for the classical case with $\Theta$ being a singleton, we leave the details for the reader. Finally, let us briefly consider the case where $E = \R^d$ is equipped with the Euclidean distance. If $\mu_n^{\theta} \Lu \mu^{\theta}$, then it is clear that the corresponding characteristic functions converge uniformly on $\Theta$, i.e., 
\[
   \sup_{\vartheta \in \Theta}\left| \int_{\R^d} e^{\mathrm{i} \langle z, x\rangle} \mu_n^{\theta}(dx) - \int_{\R^d}e^{\mathrm{i}\langle z,x \rangle}\mu^{\theta}(dx)\right| \longrightarrow 0, \qquad n \to \infty.
\]
Under an additional tightness (or equivalently uniform continuity) condition, also the converse statement is true as stated below. 

\begin{Theorem}\label{thm: uniform Levy continuity theorem}
    Let $(\mu_{n}^{\theta})_{n \geq 1, \theta \in \Theta}$ and $(\mu^{\theta})_{\theta \in \Theta}$ be Borel probability measures on $\R^d$. The following properties are equivalent:
    \begin{enumerate}
        \item[(a)] For each $z \in \R^d$ the characteristic functions converge uniformly on $\Theta$, and for each $\e > 0$ there exists $\delta > 0$ such that
        \begin{align}\label{eq: muntheta continuity}
         \sup_{\theta \in \Theta}\left| 1 - \int_{\R^d} e^{\mathrm{i}\langle z, x\rangle} \mu^{\theta}(dx) \right| < \e, \qquad |z| < \delta.
        \end{align}
        \item[(b)] For each $z \in \R^d$ the characteristic functions converge uniformly on $\Theta$, and for each $\e>0$ there exists $R > 0$ such that
        \[
         \sup_{\theta \in \Theta}\mu^{\theta}(\{|x| > R\}) < \e.
        \]
    \end{enumerate}
    Both conditions imply that $\mu_n^{\theta} \Longrightarrow \mu^{\theta}$ uniformly on $\Theta$.
\end{Theorem}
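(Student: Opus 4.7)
The proof splits naturally into the equivalence $(a)\Leftrightarrow (b)$ and the deduction of uniform weak convergence; in both halves the central tool is the classical tail bound
\[
    \mu\bigl(\{|x_j| > 2/u\}\bigr) \le \frac{1}{u}\int_{-u}^{u}\bigl(1 - \operatorname{Re} \phi_\mu(z e_j)\bigr)\,dz, \qquad u > 0, \ j = 1,\dots,d,
\]
obtained by Fubini from the identity $\tfrac{1}{2u}\int_{-u}^u(1-\phi_\mu)\,dz = \int_{\R^d}(1-\sin(ux_j)/(ux_j))\,\mu(dx)$ together with $1 - \sin(y)/y \ge 1/2$ whenever $|y|\ge 2$.

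For $(b)\Rightarrow(a)$, I would use the elementary estimate $|1-e^{\mathrm{i}\langle z,x\rangle}| \le (|z||x|)\wedge 2$: given $\e>0$, pick $R$ with $\sup_\theta\mu^\theta(\{|x|>R\})<\e/4$, whence $|1-\phi^\theta(z)| \le R|z| + 2\mu^\theta(\{|x|>R\})$ is uniformly less than $\e$ for $|z|<\e/(2R)$. For $(a)\Rightarrow(b)$, the displayed tail bound combined with the uniform equicontinuity of $\phi^\theta$ at the origin bounds $\sup_\theta\mu^\theta(\{|x_j|>R\})$ for large $R$; a sum over $j$ together with $\{|x|>\sqrt d\,R\}\subset \bigcup_j\{|x_j|>R\}$ yields $(b)$.

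To derive the uniform weak convergence, I would first upgrade to uniform tightness of $\{\mu_n^\theta\}_{n\ge N_0,\,\theta}$. Applying the tail bound to $\mu_n^\theta$ and writing $1-\operatorname{Re}\phi_n^\theta = (1-\operatorname{Re}\phi^\theta) + (\operatorname{Re}\phi^\theta - \operatorname{Re}\phi_n^\theta)$, the first summand integrates (over $[-u,u]$) to something uniformly small in $\theta$ by the equicontinuity at $0$, while the second is controlled by $\int_{-u}^u \sup_\theta|\phi_n^\theta(z)-\phi^\theta(z)|\,dz$, which vanishes by dominated convergence applied to the envelope (bounded by $2$, pointwise convergent to zero). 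Hence there exist $R$ and $N_0$ with $\sup_{\theta,\,n\ge N_0}\mu_n^\theta(\{|x|>R\})<\e$, supplementing the tightness of $\{\mu^\theta\}$ given by $(b)$.

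The convergence is then closed via Fourier inversion. For a Schwartz $h$, Parseval yields
\[
    \Bigl|\int h\,d\mu_n^\theta - \int h\,d\mu^\theta\Bigr| \le (2\pi)^{-d}\int|\hat h(z)|\cdot\sup_{\theta}|\phi_n^\theta(-z)-\phi^\theta(-z)|\,dz,
\]
which tends to zero by dominated convergence with integrable majorant $2|\hat h|$. A general $f\in C_b(\R^d)$ is reduced to this case by using uniform tightness to choose a compact $K$ with $\mu^\theta(K^c)$ and $\sup_{n \ge N_0}\mu_n^\theta(K^c)$ uniformly small, replacing $f$ by a compactly supported continuous $\tilde f$ agreeing with $f$ on $K$, and approximating $\tilde f$ uniformly by a Schwartz function via Gaussian convolution. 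The principal technical obstacle is the uniform tightness upgrade, specifically the interchange of $\sup_\theta$ with the integral over $[-u,u]$; this is the only place where the pointwise-in-$z$ uniform convergence of the characteristic functions is leveraged, and is resolved by dominated convergence on the bounded envelope, with any measurability concern handled by passing to an outer integral or by restriction to a countable dense subset of $\Theta$.
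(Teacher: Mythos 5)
The paper itself gives no argument for this statement---it is explicitly declared ``well-known from the literature'' and omitted---so there is nothing to compare line by line; judged on its own, your proposal is correct and is the standard route one would expect. The truncation inequality $\mu(\{|x_j|>2/u\})\le \frac{1}{u}\int_{-u}^{u}(1-\operatorname{Re}\phi_\mu(ze_j))\,dz$ does exactly the work of exchanging the equicontinuity condition \eqref{eq: muntheta continuity} for uniform tightness of $(\mu^\theta)_{\theta}$ (and conversely via $|1-e^{\mathrm{i}\langle z,x\rangle}|\le(|z||x|)\wedge 2$), and your upgrade to uniform tightness of $(\mu_n^\theta)_{n\ge N_0,\theta}$ by splitting $1-\operatorname{Re}\phi_n^\theta=(1-\operatorname{Re}\phi^\theta)+(\operatorname{Re}\phi^\theta-\operatorname{Re}\phi_n^\theta)$ and dominating the second piece by the bounded envelope $\sup_\theta|\phi_n^\theta-\phi^\theta|\le 2$ is sound; note that restricting to $n\ge N_0$ costs nothing, since the conclusion is a statement about the limit $n\to\infty$ only. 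Two minor simplifications: the measurability concern you flag is automatic, because $z\mapsto\sup_\theta|\phi_n^\theta(z)-\phi^\theta(z)|$ is a supremum of continuous functions, hence lower semicontinuous and Borel; and in the final reduction from $f\in C_b$ to Schwartz test functions it is worth stating the error decomposition explicitly (two tail terms of size $2\|f\|_\infty\sup\mu(\,\cdot\,(K^c))$, two uniform-approximation terms $\|\tilde f-\tilde f\ast p_\sigma\|_\infty$, and the Parseval term), since that is where the uniform-in-$\theta$ character of every estimate must be visible. With those cosmetic points addressed, the argument is complete.
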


Such a result should be well-known from the literature and is therefore omitted.

\subsection*{Acknowledgement}

M. F. would like to thank the LMRS for the wonderful hospitality and Financial support in 2023 during which a large part of the research was carried out. Moreover, M.F. gratefully acknowledges the Financial support received from Campus France awarded by Ambassade de France en Irlande to carry out the project “Parameter estimation in affine rough models”. The authors would like to thank the referees for many critical remarks that have led to a significantly improved version of the manuscript.

\bibliographystyle{amsplain}
\phantomsection\addcontentsline{toc}{section}{\refname}\bibliography{Bibliography}

\providecommand{\bysame}{\leavevmode\hbox to3em{\hrulefill}\thinspace}
\providecommand{\MR}{\relax\ifhmode\unskip\space\fi MR }
\providecommand{\MRhref}[2]{%
  \href{http://www.ams.org/mathscinet-getitem?mr=#1}{#2}
}
\providecommand{\href}[2]{#2}
\begin{thebibliography}{10}

\bibitem{MR3934104}
Eduardo Abi~Jaber and Omar El~Euch, \emph{Multifactor approximation of rough
  volatility models}, SIAM J. Financial Math. \textbf{10} (2019), no.~2,
  309--349. \MR{3934104}

\bibitem{MR4019885}
Eduardo Abi~Jaber, Martin Larsson, and Sergio Pulido, \emph{Affine {V}olterra
  processes}, Ann. Appl. Probab. \textbf{29} (2019), no.~5, 3155--3200.
  \MR{4019885}

\bibitem{MR2471289}
Gopal~K. Basak and Philip Lee, \emph{Asymptotic properties of an estimator of
  the drift coefficients of multidimensional {O}rnstein-{U}hlenbeck processes
  that are not necessarily stable}, Electron. J. Stat. \textbf{2} (2008),
  1309--1344. \MR{2471289}

\bibitem{MR4521278}
Christian Bayer and Simon Breneis, \emph{Markovian approximations of stochastic
  {V}olterra equations with the fractional kernel}, Quant. Finance \textbf{23}
  (2023), no.~1, 53--70. \MR{4521278}

\bibitem{MR3494612}
Christian Bayer, Peter Friz, and Jim Gatheral, \emph{Pricing under rough
  volatility}, Quant. Finance \textbf{16} (2016), no.~6, 887--904. \MR{3494612}

\bibitem{BH19}
Viktor Bengs and Hajo Holzmann, \emph{Uniform approximation in classical weak
  convergence theory}, arXiv:1903.09864 (2019).

\bibitem{BENNEDSEN2017301}
Mikkel Bennedsen, \emph{A rough multi-factor model of electricity spot prices},
  Energy Economics \textbf{63} (2017), 301--313.

\bibitem{BBF23}
Luigi~Amedeo Bianchi, Stefano Bonaccorsi, and Martin Friesen, \emph{Limits of
  stochastic {V}olterra equations driven by {G}aussian noise}, Stochastics and
  Partial Differential Equations: Analysis and Computations \textbf{13} (2025),
  585–630.

\bibitem{MR2178042}
Richard~C. Bradley, \emph{Basic properties of strong mixing conditions. {A}
  survey and some open questions}, Probab. Surv. \textbf{2} (2005), 107--144,
  Update of, and a supplement to, the 1986 original. \MR{2178042}

\bibitem{MR4412586}
Etienne Chevalier, Sergio Pulido, and Elizabeth Z\'{u}\~{n}iga, \emph{American
  options in the {V}olterra {H}eston model}, SIAM J. Financial Math.
  \textbf{13} (2022), no.~2, 426--458. \MR{4412586}

\bibitem{math11194201}
Giulia Di~Nunno, K\c{e}stutis Kubilius, Yuliya Mishura, and Anton
  Yurchenko-Tytarenko, \emph{From {C}onstant to {R}ough: {A} {S}urvey of
  {C}ontinuous {V}olatility {M}odeling}, Mathematics \textbf{11} (2023),
  no.~19.

\bibitem{MR3905737}
Omar El~Euch and Mathieu Rosenbaum, \emph{The characteristic function of rough
  {H}eston models}, Math. Finance \textbf{29} (2019), no.~1, 3--38.
  \MR{3905737}

\bibitem{F24}
Martin Friesen, \emph{Regular occupation measures of {V}olterra processes},
  arXiv:2404.05381 (2024).

\bibitem{FJ22}
Martin Friesen and Peng Jin, \emph{Volterra square-root process: {S}tationarity
  and regularity of the law}, The Annals of Applied Probability \textbf{34}
  (2024), no.~1A, 318 -- 356.

\bibitem{MR3805308}
Jim Gatheral, Thibault Jaisson, and Mathieu Rosenbaum, \emph{Volatility is
  rough}, Quant. Finance \textbf{18} (2018), no.~6, 933--949. \MR{3805308}

\bibitem{MR1050319}
G.~Gripenberg, S.-O. Londen, and O.~Staffans, \emph{Volterra integral and
  functional equations}, Encyclopedia of Mathematics and its Applications,
  vol.~34, Cambridge University Press, Cambridge, 1990. \MR{1050319}

\bibitem{MR4265053}
El~Mehdi Haress and Yaozhong Hu, \emph{Estimation of all parameters in the
  fractional {O}rnstein-{U}hlenbeck model under discrete observations}, Stat.
  Inference Stoch. Process. \textbf{24} (2021), no.~2, 327--351. \MR{4265053}

\bibitem{MR4091168}
Blanka Horvath, Antoine Jacquier, and Peter Tankov, \emph{Volatility options in
  rough volatility models}, SIAM J. Financial Math. \textbf{11} (2020), no.~2,
  437--469. \MR{4091168}

\bibitem{MR2638974}
Yaozhong Hu and David Nualart, \emph{Parameter estimation for fractional
  {O}rnstein-{U}hlenbeck processes}, Statist. Probab. Lett. \textbf{80} (2010),
  no.~11-12, 1030--1038. \MR{2638974}

\bibitem{MR4023508}
Yaozhong Hu, David Nualart, and Hongjuan Zhou, \emph{Drift parameter estimation
  for nonlinear stochastic differential equations driven by fractional
  {B}rownian motion}, Stochastics \textbf{91} (2019), no.~8, 1067--1091.
  \MR{4023508}

\bibitem{MR3918739}
\bysame, \emph{Parameter estimation for fractional {O}rnstein-{U}hlenbeck
  processes of general {H}urst parameter}, Stat. Inference Stoch. Process.
  \textbf{22} (2019), no.~1, 111--142. \MR{3918739}

\bibitem{MR2144185}
Yury~A. Kutoyants, \emph{Statistical inference for ergodic diffusion
  processes}, Springer Series in Statistics, Springer-Verlag London, Ltd.,
  London, 2004. \MR{2144185}

\bibitem{MR1800857}
Robert~S. Liptser and Albert~N. Shiryaev, \emph{Statistics of random processes.
  {I}.}, expanded ed., Applications of Mathematics (New York), vol.~5,
  Springer-Verlag, Berlin, 2001, General theory, Translated from the 1974
  Russian original by A. B. Aries, Stochastic Modelling and Applied
  Probability. \MR{1800857}

\bibitem{MR4068875}
Fabien Panloup, Samy Tindel, and Maylis Varvenne, \emph{A general drift
  estimation procedure for stochastic differential equations with additive
  fractional noise}, Electron. J. Stat. \textbf{14} (2020), no.~1, 1075--1136.
  \MR{4068875}

\bibitem{MR3834854}
Viet~Son Pham and Carsten Chong, \emph{Volterra-type {O}rnstein-{U}hlenbeck
  processes in space and time}, Stochastic Process. Appl. \textbf{128} (2018),
  no.~9, 3082--3117. \MR{3834854}

\bibitem{MR3561100}
Gennady Samorodnitsky, \emph{Stochastic processes and long range dependence},
  Springer Series in Operations Research and Financial Engineering, Springer,
  Cham, 2016. \MR{3561100}

\bibitem{MR3684455}
Nikola Sandri\'{c}, \emph{A note on the {B}irkhoff ergodic theorem}, Results
  Math. \textbf{72} (2017), no.~1-2, 715--730. \MR{3684455}

\bibitem{Z21}
Elizabeth Zuniga, \emph{Volterra processes and applications in finance}, V.
  General Mathematics [math.GM]. Université Paris-Saclay, 2021. (2021),
  37--59.

\end{thebibliography}

\section{Supplementary material: Tables and figures}

\begin{figure}[H]
  \centering
  \begin{minipage}[b]{0.85\textwidth}
    \includegraphics[width=\textwidth]{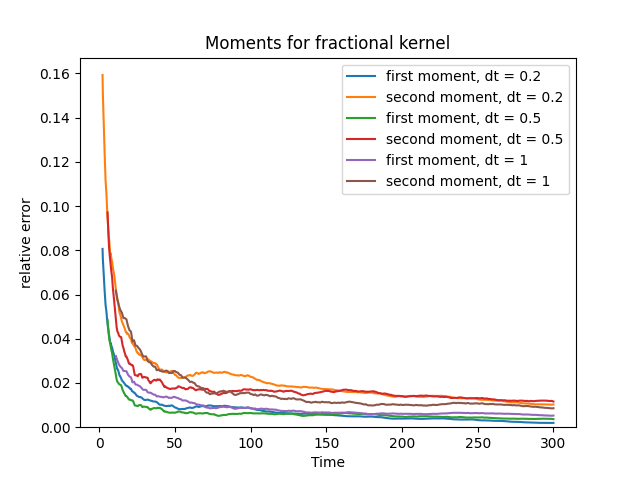}
    \captionsetup{width=1\linewidth}
    \caption{Moments with fractional kernel with $\alpha=0.75$, and $\Delta t_{k+1} \in \{0.2, 0.5, 1\}$, $N = 200$.}
    \label{fig:Moments_fractional_dt}
    \end{minipage}
\end{figure}

\begin{figure}[H]
  \centering
    \begin{minipage}[b]{0.85\textwidth}
    \includegraphics[width=\textwidth]{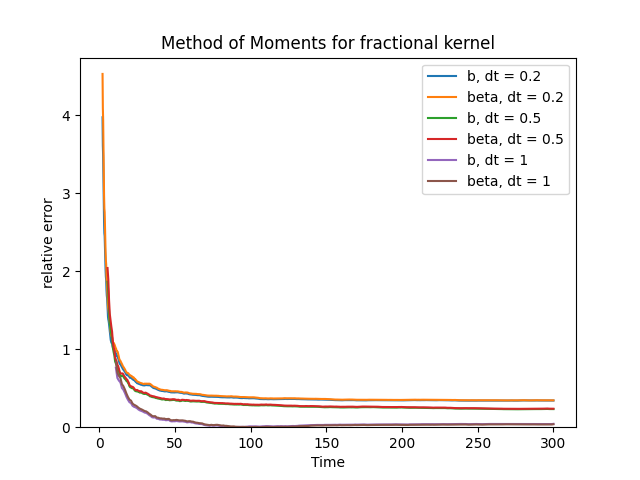}
    \captionsetup{width=1\linewidth}
    \caption{Method of Moments for the fractional kernel with $\alpha=0.75$, and $\Delta t_{k+1} \in \{0.2, 0.5, 1\}$, $N = 200$.}
    \label{fig:MoM_fractional_dt}
  \end{minipage}
\end{figure}

\begin{figure}[H]
  \centering
  \begin{minipage}[b]{0.85\textwidth}
    \includegraphics[width=\textwidth]{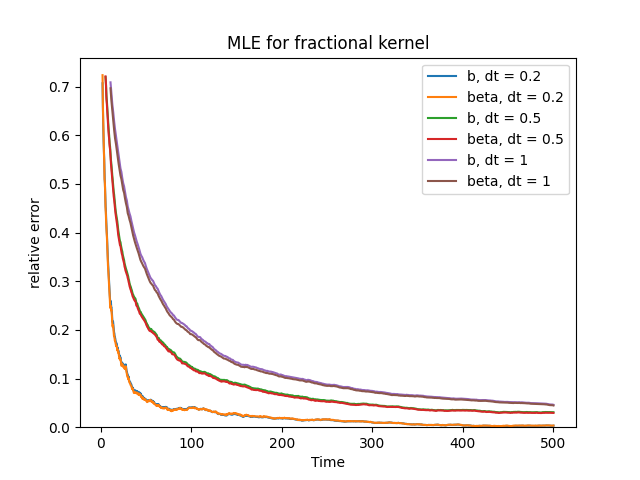}
    \captionsetup{width=1\linewidth}
    \caption{MLE with fractional kernel with $\alpha=0.75$, $\Delta t_{k+1} \in \{0.2, 0.5, 1\}$, $N = 200$}
    \label{fig:MLE fractional}
    \end{minipage}
\end{figure}

\begin{figure}[H]
  \centering
  \begin{minipage}[b]{0.85\textwidth}
    \includegraphics[width=\textwidth]{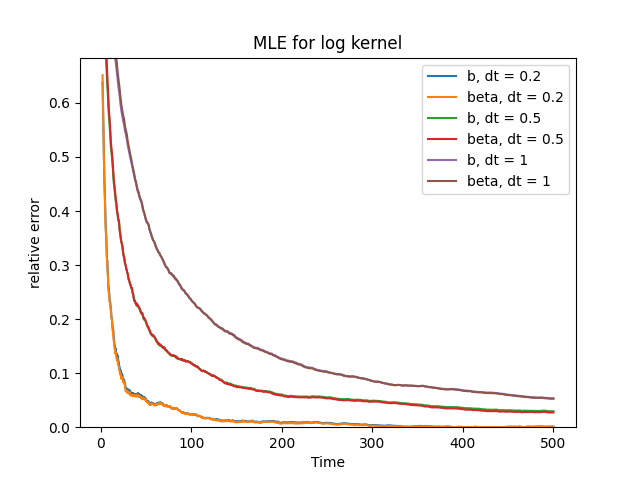}
    \captionsetup{width=1\linewidth}
    \caption{MLE with log kernel, $\Delta t_{k+1} \in \{0.2, 0.5, 1\}$, $N = 200$}
    \label{fig:MLE log}
    \end{minipage}
\end{figure}

\begin{figure}[H]
  \centering
  \begin{minipage}[b]{0.85\textwidth}
    \includegraphics[width=\textwidth]{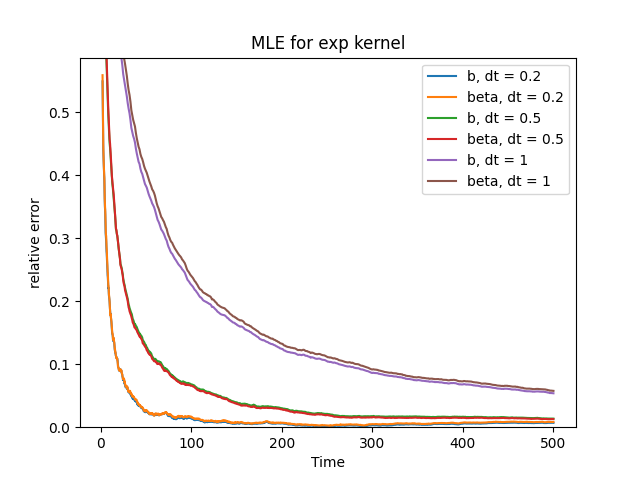}
    \captionsetup{width=1\linewidth}
    \caption{MLE with exp kernel, $\Delta t_{k+1} \in \{0.2, 0.5, 1\}$, $N = 200$}
    \label{fig:MLE exp}
  \end{minipage}
\end{figure}

\begin{figure}[H]
  \centering
  \begin{minipage}[b]{0.85\textwidth}
    \includegraphics[width=\textwidth]{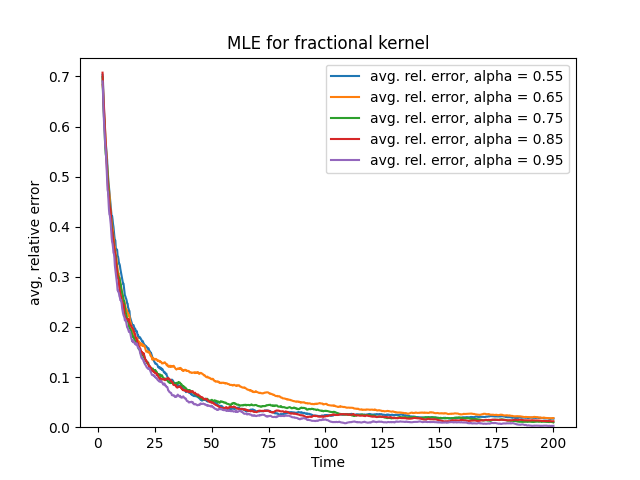}
    \captionsetup{width=1\linewidth}
    \caption{MLE with fractional kernel with $\alpha \in \{0.55, 0.65, 0.75, 0.85, 0.95\}$, $\Delta t_{k+1} =  0.2$. The case $\alpha = 0.65$ stands out due to the smaller sample size $N = 200$.}
    \label{fig:MLE fractional alpha}
    \end{minipage}
\end{figure}

\begin{figure}[H]
  \centering
  \begin{minipage}[b]{0.47\textwidth}
    \includegraphics[width=\textwidth]{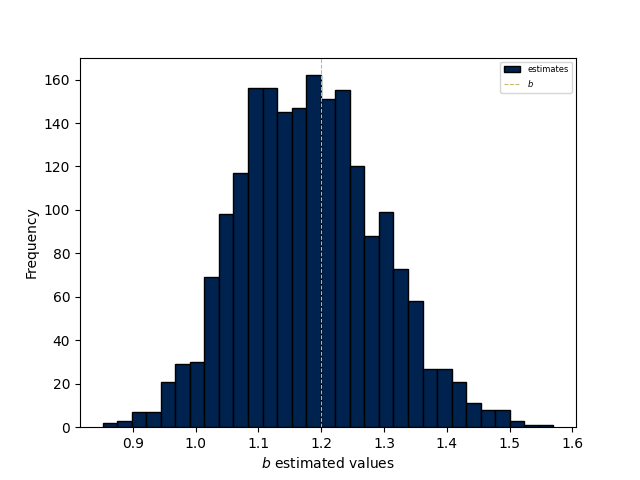}
    \captionsetup{width=1\linewidth}
    \caption{MLE with fractional kernel with $\alpha = 0.75$, $\Delta t_{k+1} = 0.2$, $N = 2000$, $T = 200$}
    \label{fig:MLE histogram b}
    \end{minipage}
  \hfill
  \begin{minipage}[b]{0.47\textwidth}
    \includegraphics[width=\textwidth]{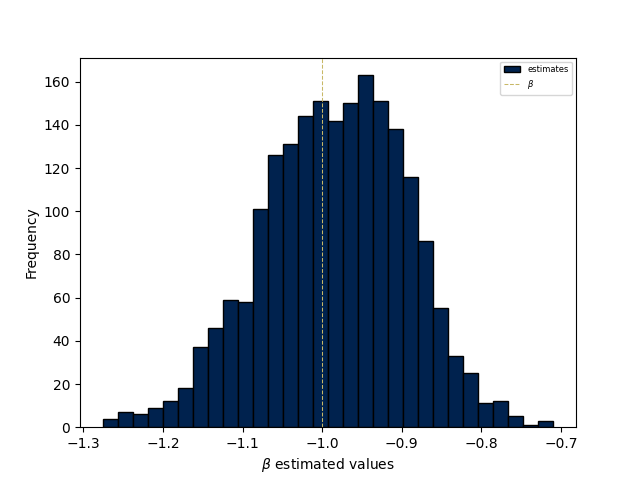}
    \captionsetup{width=1\linewidth}
    \caption{MLE with fractional kernel with $\alpha = 0.75$, $\Delta t_{k+1} = 0.2$, $N = 2000$, $T = 200$}
    \label{fig:MLE histogram beta}
  \end{minipage}
\end{figure}

\end{document}